%
%
%


\documentclass{tran-l}





\usepackage{amsfonts}
\usepackage{amsmath}
\usepackage{amsfonts}
\usepackage{amssymb}
\usepackage{amscd}

\newtheorem{theorem}{Theorem}[section]
\newtheorem{lemma}[theorem]{Lemma}
\newtheorem{corollary}[theorem]{Corollary}
\newtheorem{prop}[theorem]{Proposition}

\theoremstyle{definition}
\newtheorem{definition}[theorem]{Definition}

\theoremstyle{remark}
\newtheorem{remark}[theorem]{Remark}

\numberwithin{equation}{section}

\newcommand{\erre}{\mathbb{R}}
\newcommand{\p}{\mathbb{P}}
\newcommand{\hess}{\operatorname{hess}}
\newcommand{\Hess}{\operatorname{Hess}}
\newcommand{\pair}[1]{\langle#1\rangle}
\newcommand{\ra}{\rightarrow}
\newcommand{\esse}{\mathbb{S}}
\newcommand{\di}{\mathrm{d}}
\newcommand{\norm}[1]{{\|#1\|}}
\newcommand{\enne}{\mathbb{N}}
\newcommand{\sgn}{\operatorname{sgn}}
\newcommand{\R}{\operatorname{R}}
\newcommand{\pmin}{p_\mathrm{min}}
\newcommand{\pmax}{p_\mathrm{max}}
\newcommand{\ricc}{\operatorname{Ricc}}

\begin{document}

\title[Hypersurfaces in warped products]{Hypersurfaces of constant higher order mean curvature in warped products}



\author[L. J. Al\'ias]{Luis J. Al\'ias}
\address{Departamento de Matem\'aticas, Universidad de Murcia, Campus de Espinardo, 30100 Espinardo, Murcia, Spain.}
\email{ljalias@um.es}
\thanks{L.J. Al\'ias was partially supported by MICINN project MTM2009-10418 and Fundaci\'{o}n S\'{e}neca
project 04540/GERM/06, Spain. This research is a result of the activity developed within the framework of the Programme in Support of Excellence Groups of the Regi\'{o}n de Murcia, Spain, by Fundaci\'{o}n S\'{e}neca, Regional Agency
for Science and Technology (Regional Plan for Science and Technology 2007-2010).}

\author[D. Impera]{Debora Impera}
\address{Dipartimento di Matematica,
Universit\`{a} degli studi di Milano, via Saldini 50, I-20133 Milano, Italy.} \email{debora.impera@unimi.it}

\author[M. Rigoli]{Marco Rigoli}
\address{Dipartimento di Matematica,
Universit\`{a} degli studi di Milano, via Saldini 50, I-20133 Milano, Italy.} \email{marco.rigoli@unimi.it}
\thanks{M. Rigoli was partially supported by MEC Grant SAB2010-0073}

\subjclass[2010]{53C40, 53C42, 53A10}

\date{January 11, 2011; revised December 15, 2011}

\dedicatory{}

\begin{abstract}
In this paper we characterize compact and complete hypersurfaces with some constant higher order mean curvature into warped product spaces. Our approach is based on the use of a new trace operator version of the Omori-Yau maximum
principle which seems to be interesting in its own.
\end{abstract}

\maketitle

\section{Introduction}

A classical result by Alexandrov \cite{al} states that a compact hypersurface with constant mean curvature embedded in Euclidean space must be a round sphere. The original proof is based on a clever use of the maximum
principle for elliptic
partial differential equations. This method, now called the Alexandrov's reflexion
method, also works for hypersurfaces in ambient spaces having a sufficiently large number of isometric reflexions, for instance in the hyperbolic space.

To extend the above result to a larger class of Riemannian spaces it appears convenient to
consider manifolds with a sufficiently large family of complete embedded constant mean
curvature hypersurfaces. Such a family plays the role of the umbilical hypersurfaces in spaces of constant sectional curvature. In this setting, given an immersed hypersurface, the next step is to look for geometric assumptions that force the hypersurface to be one of the
selected family. In the compact case, this was first done by Montiel \cite{montiel} that considers as a natural class of ambient manifolds that of warped products $M^{n+1}= \erre\times_\rho\p^n$ where $\p^n$ is a complete
$n$-dimensional Riemannian manifold and $\rho:\erre\rightarrow \erre_+$ is a smooth warping function. Then each leaf $\p_t=\{t\}\times\p^n$ (called here a
{\it slice\/}) of the foliation
$t\in \erre\mapsto \p_t$ of $M^{n+1}$ is a complete hypersurface with constant mean
curvature. This approach was later considered in \cite{aliasdajczer} where Al\'ias and Dajczer generalized Montiel's results. Some of those generalizations hold even for complete, not necessarily compact, hypersurfaces.\\
The aim of the present paper is to extend the investigation to hypersurfaces with constant higher order mean curvatures, both in the compact and in the complete case. Our main analytical tools to reach the goal are provided by the Newton tensors $P_k$ of the hypersurface, their associated second order differential operators $L_k$ and further various combinations of them. We underline that in the complete case, we tailor an appropriate version of the Omori-Yau maximum principle for very general operators to deal with the problems at hand.\\
The paper begins with Section 2, collecting a number of preliminary results and fixing notation. Section 3 is devoted to a proof of a generalized version of the Omori-Yau maximum  principle for a wide class of trace operators and to determine some geometrical assumptions guaranteeing the validity of the principle (see for instance Corollary \ref{OYproperly}). In Section 4, as a first application of our method and inspired by the mean curvature estimates given in \cite{aliasdajczer2}, we derive higher order mean curvature estimates for complete immersed hypersurfaces. In Sections 5 and 6 we determine sufficient conditions for hypersurfaces with constant higher order mean curvatures contained in a slab to be a slice of the ambient space, extending previous results for the case of constant mean curvature hypersurfaces given in \cite{aliasdajczer}. Finally, in Section 7 we give a number of further results recovering this uniqueness property.

\section{Preliminaries}
Let $f:\Sigma^n \rightarrow M^{n+1}$ be a connected hypersurface isometrically immersed into the Riemannian manifold
$M^{n+1}$. We let $A$ denote the second fundamental form of the immersion with respect to a (locally defined) normal
vector field $N$. Its eigenvalues,
$\kappa_1,\ldots,\kappa_n$, are the principal curvatures of the hypersurface (in the direction of $N$). Their elementary symmetric functions $S_k$, $k=0,...,n$,
$S_0=1$, define the $k$-mean curvatures of the immersion via the formula
$$
H_k= {n \choose k}^{-1}S_k.
$$
Thus $H_1=H$ is the mean curvature, $H_n$ is the Gauss-Kronecker curvature and $H_2$ is, when the ambient space is
Einstein, a multiple of the scalar curvature modulo a constant.

The Newton tensors associated to the immersion are inductively defined by
$$
P_0=I, \qquad P_k=S_k I-AP_{k-1}.
$$
Note, for further use, that $\mathrm{Tr}P_k=(n-k)S_k$ and $\mathrm{Tr}AP_k=(k+1)S_{k+1}$.
In the sequel we shall need to have the operators $P_k$ to be globally defined on $T\Sigma$. Obviously, the sign of the second fundamental form
$A$ depends on the chosen local unit field $N$.  However, when $k$ is even the sign of $S_k$ (and hence $H_k$) does not
depend on the chosen $N$, which implies, by its very definition, that the operator $P_k$ is a globally defined tensor field on $T\Sigma$. On the
other hand, when $k$ is odd in order to have $P_k$ globally defined, we need to assume that $\Sigma$ is
\textit{two-sided}. Recall that a hypersurface $f:\Sigma^n \rightarrow M^{n+1}$ is called two-sided if its normal bundle is trivial, i.e. there exists a
globally defined unit normal vector field $N$. For instance, every hypersurface with never vanishing mean curvature is trivially
two-sided. When the hypersurface is two-sided, a choice of $N$ on $\Sigma$ makes the second fundamental form $A$ and its associated Newton tensors $P_k$ globally defined tensor fields
on $T\Sigma$.

Let $\nabla$ stand for the Levi-Civita connection of $\Sigma$. For a given function
$u\in C^{2}(\Sigma)$,
we denote by $\hess{u}:T\Sigma\rightarrow T\Sigma$ the symmetric operator given by $\hess{u}(X)=\nabla_X\nabla u$ for every $X\in T\Sigma$, and by
$\Hess{u}:T\Sigma\times T\Sigma\rightarrow C^{\infty}(\Sigma)$ the metrically equivalent bilinear form given by
\[
\Hess{u}(X,Y)=\pair{\hess{u}(X),Y}.
\]
Associated to each globally defined Newton tensor $P_k:T\Sigma\ra T\Sigma$, we may consider the second order
differential operator $L_k:\mathcal{C}^{\infty}(\Sigma)\rightarrow\mathcal{C}^{\infty}(\Sigma)$ given by
$L_k=\mathrm{Tr}(P_k \circ \hess)$. In particular, $L_0$ is the Laplace-Beltrami operator $\Delta$. Observe that
\[
L_k(u)=\mathrm{div}(P_k\nabla u)-\pair{\mathrm{div}P_k,\nabla u},
\]
where $\mathrm{div}P_k=\mathrm{Tr}\nabla P_k$. This implies that $L_k$ is elliptic if and only if $P_k$ is positive definite and in this case the maximum principle holds for $L_k$. See for instance Theorem 3.1 in \cite{GT}.

Note that the ellipticity of the operator $L_1$ is guaranteed by the assumption $H_2>0$. Indeed, if this happens the mean curvature does not vanish on $\Sigma$, because of the basic inequality $H_1^2\geq H_2$. Therefore,
the immersion is two-sided and we can choose the normal unit vector $N$ on $\Sigma$ so that $H_1>0$. Furthermore
\[
n^2H_1^2=\sum_{j=1}^n\kappa_j^2+n(n-1)H_2>\kappa_i^2
\]
for every $i=1,\ldots, n$, and then the eigenvalues of $P_1$ satisfy $\mu_{1,i}=nH_1-\kappa_i>0$ for every $i$
(see, for instance, Lemma 3.10 in \cite{elbert}). This shows ellipticity of $L_1$. Regarding the ellipticity of $L_j$ when
$j\geq 2$, we will assume that there
exists an elliptic point in $\Sigma$, that is, a point $p\in\Sigma$ at which the second fundamental form $A$
is positive definite with respect to an appropriate orientation. The existence of an elliptic point implies that $H_k$ is positive at that point, and applying Garding
inequalities, \cite{Ga}, we have
\begin{equation}
\label{garding}
H_1\geq H_2^{1/2}\geq\cdots\geq H_{k-1}^{1/(k-1)}\geq H_k^{1/k}>0,
\end{equation}
with equality at any stage only for an umbilical point. Therefore, in case $H_k$ is constant, the immersion is two-sided and $H_1>0$ for the chosen
orientation.
Moreover, in this case, for every $1\leq j\leq k-1$, the operators $L_{j}$ are elliptic or, equivalently, the
operators $P_j$ are positive definite (for a proof of this fact see \cite[Proposition 3.2]{barbosacolares}). Observe that the existence of an elliptic point is not guaranteed, in general, even in the compact case. For instance,  it is clear that totally geodesic spheres and Clifford tori in $\esse^{n+1}$ are examples of compact isoparametric hypersurfaces without elliptic points. On the contrary, it is not difficult to see that every compact hypersursurface in an open hemisphere has elliptic points (see for instance the proof of Theorem 11.1 in \cite{aliasliramalacarne}).

In what follows, we consider the case when the ambient space is a warped product $M^{n+1}=I\times_{\rho} \p^n$,
where $I\subseteq\erre$ is an open interval, $\p^n$ is a complete $n$-dimensional Riemannian manifold and
$\rho:I \ra \erre_{+}$ is a smooth function.
The product manifold $I\times \p^n$ is endowed with the Riemannian metric
$$
\pair{,}=\pi_{I}^{*}(\di t^2)+\rho^2(\pi_{I})\pi_{\p}^*(\pair{,}_{\p}).
$$
Here $\pi_{I}$ and $\pi_{\p}$ denote the projections onto the corresponding factor and $\pair{,}_{\p}$ is the Riemannian
metric on $\p^n$. In particular, $M^{n+1}=I\times_{\rho} \p^n$ is complete if and only if $I=\erre$. We also observe that
each leaf $\p_t=\left\{t\right\} \times \p^n$ of the foliation $ t \ra \p_t$ of $M^{n+1}$ is a complete totally umbilical
hypersurface with constant $k$-mean curvature
$$
\mathcal{H}_k(t)=\Big( \frac{\rho'(t)}{\rho(t)}\Big)^k,\qquad 0 \leq k \leq n,
$$
with respect to $-\partial/\partial t$.

Let $f:\Sigma^n \ra M^{n+1}=I \times_{\rho} \p^n$ be an isometrically immersed hypersurface.
We define the \textsl{height function}  $h \in C^{\infty}(\Sigma)$ by setting $h=\pi_{I} \circ f$. In this context and following the terminology
introduced in \cite{aliasdajczer2}, we will say the the hypersurface is \textsl{contained in a slab} if $f(\Sigma)$ lies between two leaves $\p_{t_1}, \p_{t_2}$ with $t_1<t_2$ of
the foliation.\\
We observe that results similar to those of the present paper hold for spacelike hypersurfaces in a generalized Robertson-Walker spacetime.
These will appear in our paper \cite{aliasimperarigolilor}.

\section{The generalized Omori-Yau maximum principle for trace operators}

Let $\Sigma^n$ be an $n$-dimensional Riemannian manifold. Following the terminology introduced in \cite{pirise}, the
\textit{Omori-Yau maximum principle} is said to hold on $\Sigma$ for the Laplace operator if, for any
smooth function $u\in\mathcal{C}^2(\Sigma)$ with $u^*=\sup_\Sigma u<+\infty$ there exists a sequence of points
$\{p_j\}_{j\in\mathbb{N}}$ in $\Sigma$ with the properties
\[
\textrm{(i)} \,\,\, u(p_j)>u^*-\frac{1}{j}, \,\,\, \textrm{(ii)} \,\,\,
\norm{\nabla u(p_j)}<\frac{1}{j},
\textrm{ and } \textrm{(iii)} \,\,\, \Delta u(p_j)<\frac{1}{j}.
\]
Equivalently, for any $u\in\mathcal{C}^2(\Sigma)$ with $u_*=\inf_\Sigma u>-\infty$ there
exists a sequence $\{p_j\}_{j\in\mathbb{N}}$ in $\Sigma$ satisfying
\[
\text{(i)} \,\,\, u(p_j)<u_*+\frac{1}{j}, \,\,\, \text{(ii)} \,\,\, \norm{\nabla u(p_j)}<\frac{1}{j},
\text{ and } \text{(iii)} \,\,\, \Delta u(p_j)>-\frac{1}{j}.
\]
In this sense, the classical result given by Omori \cite{Om} and Yau \cite{Y} states that
the Omori-Yau maximum principle holds on every complete Riemannian manifold with Ricci curvature bounded from below.
More generally, as shown by Pigola, Rigoli and Setti \cite[Example 1.13]{pirise}, a sufficiently controlled decay of the
radial Ricci curvature of the form
\[
\mathrm{Ric}_\Sigma(\nabla r,\nabla r)\geq-C^2G(r)
\]
where $r$ is the distance function on $\Sigma$ to a fixed point, $C$ a positive constant, and
$G:[0,+\infty)\rightarrow\mathbb{R}$ is a smooth function satisfying
\[
\textrm{(i)} \,\,\, G(0)>0, \,\,\, \textrm{(ii)} \,\,\, G'(t)\geq 0,  \,\,\, \textrm{(iii)} \,\,\,
\int_0^{+\infty}1/\sqrt{G(t)}=+\infty \textrm{ and }
\]
\[
\textrm{(iv)} \,\,\, \limsup_{t\rightarrow+\infty}tG(\sqrt{t})/G(t)<+\infty,
\]
suffices to imply the validity of the Omori-Yau maximum principle.

On the other hand, as observed again in \cite{pirise}, the validity
of Omori-Yau maximum principle on $\Sigma^n$ does not depend on curvature bounds  as much as one would expect. For
instance, the Omori-Yau maximum principle holds on every Riemannian manifold admitting a non-negative $C^2$ function
$\gamma$ satisfying the following requirements: (i) $\gamma(p)\rightarrow +\infty$ as $p\rightarrow \infty$;
(ii) there exists $A>0$ such that $\norm{\nabla\gamma}\leq A\sqrt{\gamma}$ off a compact set; and (iii) there exists
$B>0$ such that $\Delta\gamma\leq B\sqrt{\gamma}\sqrt{G(\sqrt{\gamma})}$ off a compact set, where $G$ is as above
(see \cite[Theorem 1.9]{pirise}).

For the proof of our main results in this paper, we will use the following generalization of \cite[Theorem 1.9]{pirise} for
trace type differential operators which includes the operators $L_k$.
\begin{theorem}\label{maxprinc}
Let $(\Sigma,\pair{,})$ be a Riemannian manifold and let $L=\mathrm{Tr}(P \circ \hess)$ be a semi-elliptic operator, where
$P:T\Sigma\rightarrow T\Sigma$ is a positive semi-definite symmetric tensor satisfying $\sup_\Sigma\mathrm{Tr}P<+\infty$.
Assume the existence of a non-negative $C^2$ function $\gamma$ with the properties
\begin{eqnarray}
\label{gamma1} &\gamma(p) \ra +\infty \qquad &\text{as } p \ra \infty,\\
\label{gamma2} &\exists A>0 \qquad &\text{such that }\norm{\nabla \gamma}\leq A\gamma^{\frac{1}{2}}\qquad \text{off a compact set,}\\
\label{gamma3} &\exists B>0 \qquad &\text{such that } L \gamma \leq B\gamma^{\frac{1}{2}}G(\gamma^{\frac{1}{2}})^{\frac{1}{2}}\qquad \text{off a compact set,}
\end{eqnarray}
where $G$ is a smooth function on $[0,+\infty)$ such that:
\begin{equation}\label{condG}
\begin{array}{ll}
\mathrm{(i)}\  G(0)>0, & \mathrm{(ii)}\  G'(t)\geq 0 \qquad \text{on } [0,+\infty),\\
\mathrm{(iii)}\  G(t)^{-\frac{1}{2}}\not \in L^1(+\infty),& \mathrm{(iv)}\  \limsup_{t \ra \infty} \frac{t G(t^{\frac{1}{2}})}{G(t)}<+\infty.
\end{array}
\end{equation}
Then, given any function $u \in C^{2}(\Sigma)$ with $u^*=\sup_{\Sigma}u < +\infty$, there exists a sequence $\left\{p_{j}\right\}_{j\in \enne} \subset \Sigma$ with the properties
\begin{equation}\label{omoriyau}
\mathrm{(i)}\ u(p_j)>u^*-\frac{1}{j},\ \mathrm{(ii)} \ \norm{\nabla u(p_j)}<\frac{1}{j}, \ \mathrm{(iii)} \ Lu(p_j)< \frac{1}{j}, \ \forall j\in\enne.
\end{equation}
\end{theorem}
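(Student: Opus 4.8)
The plan is to adapt the Pigola--Rigoli--Setti argument behind \cite[Theorem 1.9]{pirise}, replacing $\Delta$ by $L$ and exploiting the two structural hypotheses on $P$: positive semi-definiteness (so that $\hess w\leq 0$ forces $Lw=\mathrm{Tr}(P\circ\hess w)\leq 0$) and $\sup_\Sigma\mathrm{Tr}P<+\infty$ (to dominate the new term produced by the chain rule). First I would dispose of the trivial case in which $u$ attains $u^\ast$ at some point $q_\ast$: there $\nabla u(q_\ast)=0$ and $\hess u(q_\ast)\leq 0$, hence $Lu(q_\ast)\leq 0$ by semi-definiteness of $P$, and the constant sequence $p_j\equiv q_\ast$ already satisfies \eqref{omoriyau}. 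So from now on one assumes $u$ does not attain its supremum.

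Next I would construct an auxiliary function $\varphi\in C^2([0,+\infty))$ which is non-negative, strictly increasing, \emph{concave} ($\varphi''\leq 0$), diverges at $+\infty$, and satisfies $\varphi'(t)\,t^{1/2}\leq c_1$ and $\varphi'(t)\,t^{1/2}G(t^{1/2})^{1/2}\leq c_2$ for $t$ large; the canonical choice is $\varphi(t)=\int_0^{t^{1/2}}G(s)^{-1/2}\,\di s$, smoothly modified near $t=0$ to keep $\varphi'$ finite and $\varphi'$ decreasing. Then \eqref{condG}(iii) forces $\varphi(t)\to+\infty$, \eqref{condG}(i)--(ii) give $\varphi'(t)\,t^{1/2}G(t^{1/2})^{1/2}\equiv\tfrac12$ for $t$ large (hence also $\varphi'(t)\,t^{1/2}$ bounded, since $G\geq G(0)>0$) and the monotonicity of $\varphi'$, i.e. concavity. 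For $\varepsilon>0$ set $v_\varepsilon=u-\varepsilon\,\varphi(\gamma)$. By \eqref{gamma1} the function $\gamma$ is proper and $v_\varepsilon\to-\infty$ at infinity, so $v_\varepsilon$ attains a maximum at some $q_\varepsilon\in\Sigma$. A standard compactness argument shows that $\gamma(q_\varepsilon)\to+\infty$ and $u(q_\varepsilon)\to u^\ast$ as $\varepsilon\to0$: if along a subsequence $\gamma(q_\varepsilon)$ stayed bounded, then $q_\varepsilon$ would lie in a fixed (compact) sublevel set of $\gamma$ and, passing to the limit in $v_\varepsilon(x)\leq v_\varepsilon(q_\varepsilon)$, $u$ would attain $u^\ast$, contrary to the reduction above. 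In particular, for $\varepsilon$ small $q_\varepsilon$ lies off the compact set where \eqref{gamma2}--\eqref{gamma3} hold.

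Then I would read off first- and second-order information at the interior maximum $q_\varepsilon$. From $\nabla v_\varepsilon(q_\varepsilon)=0$ and \eqref{gamma2},
\[
\norm{\nabla u(q_\varepsilon)}=\varepsilon\,\varphi'(\gamma(q_\varepsilon))\,\norm{\nabla\gamma(q_\varepsilon)}\leq\varepsilon A\,\varphi'(\gamma(q_\varepsilon))\,\gamma(q_\varepsilon)^{1/2}\leq\varepsilon A c_1 .
\]
From $\hess v_\varepsilon(q_\varepsilon)\leq 0$ and $P\geq 0$ one gets $Lv_\varepsilon(q_\varepsilon)\leq 0$; combining this with the chain-rule identity
\[
L(\varphi(\gamma))=\varphi'(\gamma)\,L\gamma+\varphi''(\gamma)\,\pair{P\nabla\gamma,\nabla\gamma},
\]
the bound $\pair{P\nabla\gamma,\nabla\gamma}\leq(\mathrm{Tr}P)\norm{\nabla\gamma}^2\leq(\sup_\Sigma\mathrm{Tr}P)A^2\gamma$, the sign $\varphi''\leq0$, and \eqref{gamma3}, one obtains
\[
Lu(q_\varepsilon)\leq\varepsilon\,\varphi'(\gamma(q_\varepsilon))\,L\gamma(q_\varepsilon)\leq\varepsilon B\,\varphi'(\gamma(q_\varepsilon))\,\gamma(q_\varepsilon)^{1/2}G(\gamma(q_\varepsilon)^{1/2})^{1/2}\leq\varepsilon B c_2 .
\]
Since also $u(q_\varepsilon)\to u^\ast$, for each $j\in\enne$ it suffices to choose $\varepsilon>0$ so small that $u(q_\varepsilon)>u^\ast-1/j$, $\varepsilon A c_1<1/j$ and $\varepsilon B c_2<1/j$, and put $p_j=q_\varepsilon$; this yields the sequence required in \eqref{omoriyau}.

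The routine parts — properness of $\gamma$, attainment of the maximum of $v_\varepsilon$, and the limiting argument that forces $\gamma(q_\varepsilon)\to+\infty$ — are essentially verbatim from \cite{pirise}. I expect the only genuinely new point to be the handling of the Hessian term $\varphi''(\gamma)\pair{P\nabla\gamma,\nabla\gamma}$: this is exactly where $P\geq0$ together with $\varphi''\leq0$ fixes its sign and where $\sup_\Sigma\mathrm{Tr}P<+\infty$ lets one replace $\pair{P\nabla\gamma,\nabla\gamma}$ by a multiple of $\norm{\nabla\gamma}^2$, hence by a multiple of $\gamma$ through \eqref{gamma2}. Some care is also needed in choosing $\varphi$ so that the constants $c_1,c_2$ extracted from \eqref{condG} are genuinely finite off a compact set.
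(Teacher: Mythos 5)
Your proof is correct, but it runs along a genuinely different track than the paper's. The paper uses a \emph{quotient}/logarithmic penalization: it sets $\varphi(t)=e^{\int_0^t G(s)^{-1/2}\,\di s}$, maximizes $f_j=(u-u(p_0)+1)/\varphi(\gamma)^{1/j}$, and reads off information from $\nabla\log f_j=0$ and $\Hess\log f_j\leq 0$; this produces a cross-term $\bigl(\tfrac1j-1\bigr)(\varphi'/\varphi)^2+\varphi''/\varphi$ whose control requires the Sturm-type identity \eqref{luis.1} together with the estimate \eqref{luis.2}, and the latter is exactly where hypothesis \eqref{condG}(iv) enters. You instead use an \emph{additive} (Ekeland-type) penalization $v_\varepsilon=u-\varepsilon\varphi(\gamma)$ with the concave primitive $\varphi(t)=\int_0^{\sqrt t}G(s)^{-1/2}\,\di s$ (modified near $0$), so that the chain-rule identity $L(\varphi(\gamma))=\varphi'(\gamma)L\gamma+\varphi''(\gamma)\pair{P\nabla\gamma,\nabla\gamma}$ combined with $\varphi''\leq0$ and $P\geq0$ makes the troublesome Hessian term nonpositive, and it can simply be discarded. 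Two consequences of this choice are worth pointing out: first, your route never invokes \eqref{condG}(iv) — the identity $\varphi'(t)\,t^{1/2}G(t^{1/2})^{1/2}\equiv\tfrac12$ already supplies the needed bound for $L u(q_\varepsilon)$, and $\varphi'(t)\,t^{1/2}\leq\tfrac12 G(0)^{-1/2}$ handles $\norm{\nabla u(q_\varepsilon)}$ using only \eqref{condG}(i),(ii) — so your argument actually proves a slight strengthening of the stated theorem. Second, having discarded the Hessian cross-term by sign, the bound $\pair{P\nabla\gamma,\nabla\gamma}\leq(\mathrm{Tr}P)\norm{\nabla\gamma}^2\leq(\sup_\Sigma\mathrm{Tr}P)A^2\gamma$ that you invoke in the same breath is redundant: in your version of the proof, $\sup_\Sigma\mathrm{Tr}P<+\infty$ is never actually used once \eqref{gamma3} is assumed (it remains, of course, what makes \eqref{gamma3} checkable in the geometric corollaries such as Corollary~\ref{coroOY}). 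The remaining steps — the reduction to the case where $u$ does not attain $u^*$, the properness/compactness argument forcing $\gamma(q_\varepsilon)\to+\infty$ and $u(q_\varepsilon)\to u^*$, and the final extraction of $p_j$ by letting $\varepsilon\to 0$ — match the paper's corresponding steps in substance, just packaged with a continuous parameter $\varepsilon$ instead of the discrete index $j$. In short: same overall strategy (penalize by a slowly growing function of $\gamma$ and read off first- and second-order information at the maximum), different auxiliary function and penalty structure, and yours is cleaner because concavity of $\varphi$ replaces the $(\varphi'/\varphi)$-calculus and thereby dispenses with condition \eqref{condG}(iv).
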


\begin{proof}
Define the function
$$\varphi(t)=e^{\int_0^tG(s)^{-\frac{1}{2}}}\di s.
$$
Note that $\varphi(t)$ is a well defined, smooth, positive function such that
$\varphi(t) \ra +\infty$ as $t \ra +\infty$.
Moreover
$$
\varphi'(t)=G(t)^{-\frac{1}{2}}\varphi(t) \quad \mathrm{and} \quad \varphi''(t)=\left(G(t)^{-1}-2G(t)^{-\frac{3}{2}}G'(t)\right)\varphi(t),
$$
and therefore
\begin{equation}
\label{luis.1}
\Big(\frac{\varphi'(t)}{\varphi(t)}\Big)^2-\frac{\varphi''(t)}{\varphi(t)}=2G(t)^{-\frac{3}{2}}G'(t)\geq 0.
\end{equation}
Then, using assumption \eqref{condG},(iv) we get
\begin{equation}
\label{luis.2}
\frac{\varphi'(t)}{\varphi(t)} \leq c(tG(t^{\frac{1}{2}}))^{-\frac{1}{2}},
\end{equation}
for some constant $c >0$.

Fix a point $p_0 \in \Sigma$ and, for a fixed $j\in \enne$ define
$$
f_j(p)=\frac{u(p)-u(p_0)+1}{\varphi(\gamma(p))^{\frac{1}{j}}}.
$$
Then $f_j(p_0)=1/\varphi(\gamma(p_0))^{1/j}>0$. Moreover, since $u^* < +\infty$ and $\varphi(\gamma(p)) \ra +\infty$ as $p\ra\infty$, we have $\limsup_{p\ra\infty}f_j(p) \leq 0$.
Thus, $f_j$ attains a positive absolute maximum at $p_j \in \Sigma$. Iterating this procedure we produce a sequence
$\left\{p_j\right\}_{j\in\enne}\subset \Sigma$.
The proof of \eqref{omoriyau},(i) and \eqref{omoriyau},(ii) is the same as in \cite{pirise}, so we only prove \eqref{omoriyau},(iii). Proceeding as in Theorem 1.9 of \cite{pirise}, up to passing to a subsequence, we have $\lim_{j\ra+\infty}u(p_j)=u^*$.
If $\{p_j\}$ is contained in a compact set, then $p_j \ra \overline{p} \in \Sigma$ as $j\ra +\infty$ and $u$ attains its absolute maximum. Hence we have
$$
u(\overline{p})=u^*,\qquad \norm{\nabla u (\overline{p})}=0,\qquad \Hess{u}(\overline{p})\leq 0.
$$
In particular, since $P$ is positive semi-definite it holds that $Lu(\overline{p}) \leq 0$. Hence the sequence $p_j=\overline{p}$, for each $j$, satisfies all the requirements.
Consider now the case when $\{p_j\}$ diverges off a compact set, so that, according to \eqref{gamma1}, $\gamma(p_j) \ra +\infty$. Since $f_j$ attains a positive maximum at $p_j$ we have
$$
\mathrm{(i)}\  (\nabla \log f_j)(p_j)=0, \qquad \mathrm{(ii)}\   \Hess{\log f_j}(p_j) \leq 0.$$
A simple computation then gives
\begin{align*}
\Hess{u}(p_j)(v,v) \leq& \frac{1}{j}(u(p_j)-u(p_0)+1)\Big\{\frac{\varphi'(\gamma(p_j))}{\varphi(\gamma(p_j))}\Hess{\gamma}(p_j)(v,v)\\
&+\Big[\Big(\frac{1}{j}-1\Big)\Big(\frac{\varphi'(\gamma(p_j))}{\varphi(\gamma (p_j))}\Big)^2+\frac{\varphi''(\gamma(p_j))}{\varphi(\gamma(p_j))}\Big]\pair{\nabla \gamma(p_j),v}^2 \Big\}\\
\leq& \frac{1}{j}(u(p_j)-u(p_0)+1)\Big\{\frac{\varphi'(\gamma(p_j))}{\varphi(\gamma(p_j))}\Hess{\gamma}(p_j)(v,v)\\
&+\frac{1}{j}\Big(\frac{\varphi'(\gamma(p_j))}{\varphi(\gamma (p_j))}\Big)^2\pair{\nabla \gamma(p_j),v}^2 \Big\},
\end{align*}
for every $v\in T_{p_j}\Sigma$, where we have used \eqref{luis.1}. Let $\{e_1,\ldots,e_n\}\subset T_{p_j}\Sigma$ be an orthonormal basis of eigenvectors of $P(p_j)$ corresponding to
the eigenvalues $\mu_i(p_j)=\pair{P(p_j)e_i,e_i}\geq 0$. Then, for every $1\leq i\leq n$, we have
\begin{align*}
\pair{P\hess{u}(p_j)e_i,e_i}=& \mu_i(p_j)\Hess{u}(p_j)(e_i,e_i)\\
\leq &\frac{1}{j}(u(p_j)-u(p_0)+1)
\Big\{\frac{\varphi'(\gamma(p_j))}{\varphi(\gamma(p_j))}\pair{P\hess{\gamma}(p_j)e_i,e_i}\\
&+\frac{1}{j}\Big(\frac{\varphi'(\gamma(p_j))}{\varphi(\gamma (p_j))}\Big)^2\mu_i(p_j)\pair{\nabla\gamma(p_j),e_i}^2\Big\}.
\end{align*}
Taking traces here and using the fact that
\[
\pair{P\nabla\gamma,\nabla\gamma}=\sum_{i=1}^n\mu_i\pair{\nabla\gamma,e_i}^2\leq\mathrm{Tr}P\norm{\nabla\gamma}^2,
\]
we obtain that
\begin{align*}
Lu(p_j)\leq& \frac{1}{j}(u(p_j)-u(p_0)+1)\Big\{ \frac{\varphi'(\gamma(p_j))}{\varphi(\gamma(p_j))}L\gamma(p_j)\\
&+\frac{1}{j}\Big(\frac{\varphi'(\gamma(p_j))}{\varphi(\gamma (p_j))}\Big)^2\pair{P\nabla\gamma(p_j),\nabla\gamma(p_j)}\Big\}\\
\leq& \frac{1}{j}(u(p_j)-u(p_0)+1)\Big\{ \frac{\varphi'(\gamma(p_j))}{\varphi(\gamma(p_j))}L\gamma(p_j)\\
&+\frac{1}{j}\Big(\frac{\varphi'(\gamma(p_j))}{\varphi(\gamma (p_j))}\Big)^2\mathrm{Tr}P\norm{\nabla \gamma(p_j)}^2\Big\}.
\end{align*}
Since \eqref{gamma2} and \eqref{gamma3} hold outside a compact set, they hold at $p_j$ for $j$ sufficiently large. Then, using \eqref{luis.2},
\begin{align*}
Lu(p_j)\leq&\frac{1}{j}(u(p_j)-u(p_0)+1)\Big\{Bc+\frac{1}{j}c^2A^2CG(\gamma(p_j)^{\frac{1}{2}})^{-1}\Big\}\\
\leq & C \frac{u^*-u(p_0)+1}{j}
\end{align*}
for some constant $C>0$. Since the right hand side tends to zero as $j\rightarrow+\infty$, this proves condition (iii) in
\eqref{omoriyau}.
\end{proof}

Following the terminology introduced in \cite{pirise}, we introduce the next
\begin{definition}
\begin{rm}
Let $\Sigma$ be a Riemannian manifold and let $L$ be an operator as is Theorem \ref{maxprinc}. The Omori-Yau maximum principle is said to hold on $\Sigma$ for the operator $L$ if, for any function $u \in C^{2}(\Sigma)$ with
$u^*=\sup_{\Sigma}u < +\infty$, there exists a sequence $\left\{p_{j}\right\}_{j\in \enne} \subset \Sigma$ with the properties
\[
\mathrm{(i)}\   u(p_j)>u^*-\frac{1}{j},\ \mathrm{(ii)}\   \norm{\nabla u(p_j)}<\frac{1}{j}, \ \mathrm{(iii)}\   Lu(p_j)< \frac{1}{j}
\]
for every $j\in\enne$. Equivalently, for any function $u \in C^{2}(\Sigma)$ with
$u_*=\inf_{\Sigma}u > -\infty$, there exists a sequence $\left\{p_{j}\right\}_{j\in \enne} \subset \Sigma$ with the properties
\[
\mathrm{(i)}\   u(p_j)<u_*+\frac{1}{j},\ \mathrm{(ii)}\   \norm{\nabla u(p_j)}<\frac{1}{j}, \ \mathrm{(iii)}\   Lu(p_j)> -\frac{1}{j}
\]
for every $j\in\enne$.
\end{rm}
\end{definition}

The function theoretic approach to the generalized Omori-Yau maximum principle given in Theorem \ref{maxprinc} allows us
to apply it in different situations, where the choices of the functions $\gamma$ and $G$ are suggested by the geometric setting.
The next are two significant and useful examples of an intrinsic and extrinsic nature, respectively.

Let $(\Sigma,\pair{,})$ be a complete, non-compact Riemannian manifold and let $o\in\Sigma$ be a fixed reference point. Denote with $r(p)$ the
distance function from $o$ and set $\gamma(p)=r(p)^2$. Then $\gamma$ satisfies assumptions \eqref{gamma1} and \eqref{gamma2} of
Theorem \ref{maxprinc}. Furthermore, $\gamma$ is smooth within the cut locus of $o$.
Assume that the radial sectional curvature of $\Sigma$, that is, the sectional curvature of the 2-planes containing $\nabla r$, satisfies
\begin{equation}
\label{sectM}
K^{\mathrm{rad}}_{\Sigma}\geq -G(r),
\end{equation}
where $G$ be a smooth function on $[0,+\infty)$ even at the origin, i.e. $G^{(2k+1)}(0)=0$ for each $k=0,1,\ldots$, and satisfying
conditions (i)--(iv) listed in \eqref{condG}. Then assumption \eqref{gamma3} is satisfied.

Indeed, assuming that \eqref{sectM} holds, by the Hessian comparison theorem within the cut locus of $o$, one has
\begin{equation}
\label{luis.5}
\Hess{r}(p)(v,v)\leq\frac{\phi'(r(p))}{\phi(r(p))}(\norm{v}^2-\pair{\nabla r(p),v}^2)
\end{equation}
for every $v\in T_p\Sigma$, where $\phi(t)$ is the (positive) solution of the initial value problem
\begin{displaymath}
\left\{ \begin{array}{l}
\phi''-G\phi=0,\\
\phi(0)=0,\ \phi'(0)=1.
\end{array} \right.
\end{displaymath}
Now let
$$
\psi(t)=\frac{1}{\sqrt{G(0)}}\left(e^{\int_0^t\sqrt{G(s)}ds}-1\right).
$$
Then $\psi(0)=0$, $\psi'(0)=1$ and
\[
\psi''(t)-G(t)\psi(t)=\frac{1}{\sqrt{G(0)}}\left(G(t)+\frac{G'(t)}{2\sqrt{G(t)}}\,e^{\int_0^t\sqrt{G(s)}ds}\right)\geq 0.
\]
Hence, by the Sturm comparison theorem
\begin{equation}
\label{luis.6}
\frac{\phi'(t)}{\phi(t)}\leq \frac{\psi'(t)}{\psi(t)}=\sqrt{G(t)}\frac{e^{\int_0^t\sqrt{G(s)}ds}}{e^{\int_0^t\sqrt{G(s)}ds}-1}\leq
c\sqrt{G(t)}
\end{equation}
where the last inequality holds for a constant $c>0$ and $t$ sufficiently large. Therefore, if $r$ is sufficiently large
\[
\Hess{r}\leq c\sqrt{G(r)}(\pair{,}-dr\otimes dr).
\]
Since  $\Hess{\gamma}=2r\Hess{r}+2dr\otimes dr$, we obtain from here that
\begin{equation}
\label{luis.20}
\Hess{\gamma}\leq c\sqrt{\gamma G(\sqrt{\gamma})}\pair{,}
\end{equation}
for a constant $c$ and $\gamma$ sufficiently large.
Then, using the fact that $P$ is positive semi-definite
\[
L\gamma\leq c\mathrm{Tr}P\sqrt{\gamma G(\sqrt{\gamma})}
\]
for a constant $c$ and $\gamma$ sufficiently large. We have thus proved the following:

\begin{corollary}
\label{coroOY}
Let $(\Sigma,\pair{,})$ be a complete, non-compact Riemannian manifold whose radial sectional curvature satisfies condition \eqref{sectM}.
Then, the Omori-Yau maximum principle holds on $\Sigma$ for any semi-elliptic operator
$L=\mathrm{Tr}(P \circ \hess)$ with $\sup_\Sigma\mathrm{Tr}P<+\infty$.
\end{corollary}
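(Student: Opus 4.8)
The plan is to deduce the statement directly from Theorem \ref{maxprinc} by exhibiting a single function $\gamma$ that works for \emph{every} admissible $L$ simultaneously, namely $\gamma(p)=r(p)^2$, where $r$ is the Riemannian distance to a fixed reference point $o\in\Sigma$. Since $\Sigma$ is complete and non-compact, $r$ is proper, so $\gamma(p)\to+\infty$ as $p\to\infty$, giving \eqref{gamma1}; and $\|\nabla r\|=1$ wherever $r$ is differentiable, so $\|\nabla\gamma\|=2r=2\gamma^{1/2}$, giving \eqref{gamma2} with $A=2$. The remaining hypothesis \eqref{gamma3} is exactly what the computation preceding the statement establishes: from the radial curvature bound \eqref{sectM} the Hessian comparison theorem yields \eqref{luis.5}, comparison of the model function $\phi$ with the explicit $\psi$ via the Sturm theorem yields \eqref{luis.6}, and hence \eqref{luis.20}, i.e. $\Hess\gamma\le c\sqrt{\gamma\,G(\sqrt\gamma)}\pair{,}$ for $\gamma$ large. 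At this point I would invoke the two standing assumptions on $L$: since $P$ is positive semi-definite, taking its trace in \eqref{luis.20} gives $L\gamma\le c\,\mathrm{Tr}P\,\sqrt{\gamma\,G(\sqrt\gamma)}$, and since $\sup_\Sigma\mathrm{Tr}P<+\infty$ this becomes $L\gamma\le c'\sqrt{\gamma\,G(\sqrt\gamma)}$ off a compact set, which is \eqref{gamma3}. Theorem \ref{maxprinc} then produces, for any $u\in C^2(\Sigma)$ with $u^*<+\infty$, a sequence satisfying \eqref{omoriyau}; applying the same to $-u$ gives the equivalent infimum version, so the Omori--Yau maximum principle holds on $\Sigma$ for $L$.

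The one genuinely delicate point — and the step I expect to be the main obstacle — is that $\gamma=r^2$ is only Lipschitz on $\Sigma$, being smooth merely within the cut locus of $o$, whereas Theorem \ref{maxprinc} is literally stated for $\gamma\in C^2$. I would handle this in the standard way, by Calabi's trick, observing that in the proof of Theorem \ref{maxprinc} the function $\gamma$ is used only to build the auxiliary functions $f_j$ and to estimate $L\gamma$ at their positive maximum points $p_j$. If such a $p_j$ lies in the cut locus of $o$, one replaces $r$ near $p_j$ by $r_\varepsilon(p)=\varepsilon+d(p,q_\varepsilon)$, where $q_\varepsilon$ is the point at distance $\varepsilon$ from $o$ along a fixed minimizing geodesic from $o$ to $p_j$; then $r_\varepsilon$ is smooth near $p_j$, satisfies $r_\varepsilon\ge r$ with equality at $p_j$, and the comparison estimate \eqref{luis.5} — hence \eqref{luis.20} and the trace bound on $L\gamma$ — holds for $r_\varepsilon^2$ in a neighbourhood of $p_j$ with constants converging to those for $r^2$ as $\varepsilon\to0^+$. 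Passing to the limit $\varepsilon\to0^+$ recovers the needed inequality at $p_j$, so the argument of Theorem \ref{maxprinc} goes through unchanged and the corollary follows.

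In summary, the proof is essentially an assembly: \eqref{gamma1} and \eqref{gamma2} are immediate for $\gamma=r^2$, \eqref{gamma3} is the Hessian/Sturm comparison chain \eqref{luis.5}--\eqref{luis.20} combined with positive semi-definiteness of $P$ and the bound $\sup_\Sigma\mathrm{Tr}P<+\infty$, and the only real care is the regularity bookkeeping at the cut locus, resolved by the support-function argument above. No new analytic input beyond Theorem \ref{maxprinc} itself is required.
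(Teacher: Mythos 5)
Your proposal is correct and follows essentially the same route as the paper: $\gamma=r^2$ verifies \eqref{gamma1}--\eqref{gamma2} immediately, and \eqref{gamma3} comes from the Hessian/Sturm comparison chain \eqref{luis.5}--\eqref{luis.20} followed by tracing against $P$ using positive semi-definiteness and $\sup_\Sigma\mathrm{Tr}P<+\infty$. The only difference is that you make explicit the Calabi support-function trick at the cut locus, which the paper leaves implicit after remarking that $\gamma$ is smooth only within the cut locus of $o$.
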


On the other hand, the following example, of an extrinsic nature, will be useful in the sequel
for the case of \textit{properly immersed} hypersurfaces. Consider $\p^n$ a complete, non-compact, Riemannian manifold,
let $o\in\p^n$ be a reference point and denote by $\hat{r}$ the
distance function from $o$. We will assume that the radial sectional curvature of $\p^n$ satisfies the condition
\begin{equation}
\label{sectP}
K^{\mathrm{rad}}_{\mathbb{P}}\geq -G(\hat{r}),
\end{equation}
where $G$ is a smooth function on $[0,+\infty)$ even at the origin, i.e. $G^{(2k+1)}(0)=0$ for each $k=0,1,\ldots$, and satisfying
conditions (i)--(iv) listed in \eqref{condG}.
Let $f:\Sigma^n\ra M^{n+1}=I\times_{\rho} \p^n$ be a hypersurface. Observe that if $\Sigma$ is compact then every
immersion $f:\Sigma^n\ra I\times_{\rho} \p^n$  is proper and contained in a slab, and the
Omori-Yau maximum principle trivially holds on $\Sigma$ for any semi-elliptic operator. Assume then that $\Sigma$ is non-compact
and let $f:\Sigma^n\ra M^{n+1}=I\times_{\rho} \p^n$ be a properly immersed hypersurface which is contained in a slab, that is,
$f(\Sigma)\subset [t_1,t_2]\times\p^n$.

Let $\hat{\gamma}:\p^n\ra\erre$ be the function given by
$\hat{\gamma}(x)=\hat{r}(x)^2$ for every $x\in\p^n$, and set $\gamma:\Sigma\ra\erre$ for the associated function, defined as
\[
\gamma(p)=\tilde{\gamma}(f(p))=\hat{\gamma}(x(p))=\hat{r}(x(p))^2
\]
for every $p\in\Sigma$, where $\tilde{\gamma}(t,x)=\hat{\gamma}(x)$ and $f(p)=(h(p),x(p))$. Since $f$ is proper, if $p\ra\infty$ in $\Sigma$ then
$f(p)\ra\infty$ in $M^{n+1}=I\times_{\rho} \p^n$ , but being $f$ contained in a slab, this means that
$x(p)\ra\infty$ in $\p^n$. It follows that $\gamma(p)=\hat{r}(x(p))^2\ra+\infty$ as $p\ra\infty$ in $\Sigma$, and $\gamma$ satisfies
condition \eqref{gamma1} in Theorem \ref{maxprinc}.

Let us denote by $\tilde{\nabla}$, $\hat{\nabla}$ and $\nabla$ the Levi-Civita connection (and the gradient operators) in
$M^{n+1}$, $\p^n$ and $\Sigma^n$, respectively. Since $\gamma=\tilde{\gamma}\circ f$, along the immersion $f$ we have
\begin{equation}
\label{luis.14}
\tilde{\nabla}\tilde{\gamma}=\nabla\gamma+\pair{\tilde{\nabla}\tilde{\gamma},N}N
\end{equation}
where $N$ is a (local) smooth unit normal field along $f$.
On the other hand, from $\tilde{\gamma}(t,x)=\hat{\gamma}(x)$ we have
\[
\pair{\tilde{\nabla}\tilde{\gamma},T}=0,
\]
where, as usual, $T$ stands for the lift of $\partial_t$ to the product $I\times\p^n$, and
\[
\langle\tilde{\nabla}\tilde{\gamma},V\rangle=\pair{\hat{\nabla}\hat{\gamma},V}_{\p}
\]
for every $V$, where $V$ denotes the lift of a vector field $V\in TP$ to $I\times\p^n$. Since
\[
\langle\tilde{\nabla}\tilde{\gamma},V\rangle=\rho^2\pair{\tilde{\nabla}\tilde{\gamma},V}_{\p},
\]
we conclude from here that
\begin{equation}
\label{luis.13}
\tilde{\nabla}\tilde{\gamma}=\frac{1}{\rho^2}\hat{\nabla}\hat{\gamma}=\frac{2\hat{r}}{\rho^2}\hat{\nabla}\hat{r}.
\end{equation}
Therefore, since $\norm{\hat{\nabla}\hat{r}}=\rho\norm{\hat{\nabla}\hat{r}}_\p=\rho$ and $\rho(h)\geq\min_{[t_1,t_2]}\rho(t)>0$, along the immersion we have
\begin{equation}
\label{luis.17}
\norm{\nabla\gamma}\leq\norm{\tilde{\nabla}\tilde{\gamma}}=\frac{2\sqrt{\gamma}}{\rho(h)}\leq c\sqrt{\gamma}
\end{equation}
for a positive constant $c$. Thus, $\gamma$ also satisfies condition
\eqref{gamma2} in Theorem \ref{maxprinc}. In particular $\Sigma$ is complete (see \cite{pirise} pag 10).

Next, we will see that, under appropriate extrinsic restrictions, condition \eqref{gamma3} in Theorem \ref{maxprinc} is
also satisfied. From \eqref{luis.14} it follows that
\[
\Hess{\gamma}(X,X)={\Hess}\,{{\tilde{\gamma}}}(X,X)+\pair{\tilde{\nabla}\tilde{\gamma},N}\pair{AX,X}
\]
for every tangent vector field $X\in T\Sigma$. From \eqref{luis.13}
\begin{equation}
\label{luis.15}
\tilde{\nabla}_T\tilde{\nabla}\tilde{\gamma}=-\frac{\rho'}{\rho^3}\hat{\nabla}\hat{\gamma}=
-\mathcal{H}\tilde{\nabla}\tilde{\gamma},
\end{equation}
where $\mathcal{H}(t)=\rho'(t)/\rho(t)$. In particular, ${\Hess}\,{{\tilde{\gamma}}}(T,T)=0$. Then, writing $X=X^*+\pair{X,T}T$, where $X^*={\pi_{\p}}_*X$, we
have
\[
{\Hess}\,{{\tilde{\gamma}}}(X,X)={\Hess}\,{{\tilde{\gamma}}}(X^*,X^*)+2\pair{X,T}{\Hess}\,{{\tilde{\gamma}}}(X^*,T).
\]
From \eqref{luis.15} we have that
\[
{\Hess}\,{{\tilde{\gamma}}}(X^*,T)=-\mathcal{H}(h)\pair{\tilde{\nabla}\tilde{\gamma},X}=
-\mathcal{H}(h)\pair{\nabla\gamma,X}.
\]
On the other hand, using
\[
\tilde{\nabla}_{X^*}\tilde{\nabla}\tilde{\gamma}=
\frac{1}{\rho^2}\hat{\nabla}_{X^*}\hat{\nabla}\hat{\gamma}-\frac{\rho'}{\rho^3}\pair{\hat{\nabla}\hat{\gamma},X^*}T
\]
we also have
\[
{\Hess}\,{{\tilde{\gamma}}}(X^*,X^*)=\frac{1}{\rho^2}
\pair{\hat{\nabla}_{X^*}\hat{\nabla}\hat{\gamma},X^*}=
\pair{\hat{\nabla}_{X^*}\hat{\nabla}\hat{\gamma},X^*}_\p=
{\Hess}\,{{\hat{\gamma}}}(X^*,X^*).
\]
Summing up,
\begin{eqnarray}
\label{luis.16}
\Hess{\gamma}(X,X) & = & {\Hess}\,{{\hat{\gamma}}}(X^*,X^*)-2\mathcal{H}(h)\pair{\nabla\gamma,X}\pair{T,X}\\
\nonumber {} & {} & +\pair{\tilde{\nabla}\tilde{\gamma},N}\pair{AX,X}
\end{eqnarray}
for every tangent vector field $X\in T\Sigma$.

Observe that, using \eqref{luis.17},
\[
\left|\mathcal{H}(h)\pair{\nabla\gamma,X}\pair{T,X}\right|\leq\left|\mathcal{H}(h)\right|
\norm{\nabla\gamma}\norm{X}^2\leq c\sqrt{\gamma}\norm{X}^2.
\]
for a constant $c>0$, since $|\mathcal{H}(h)|\leq\max_{[t_1,t_2]}|\mathcal{H}(t)|$.
On the other hand, reasoning as we did before in deriving \eqref{luis.20}, from condition \eqref{sectP} and
using the Hessian comparison theorem for $\p^n$ it follows that, if $\gamma$ is sufficiently large, then
\[
{\Hess}\,{{\hat{\gamma}}}(X^*,X^*)\leq c\sqrt{{\gamma}G(\sqrt{{\gamma}})}\norm{X}^2
\]
for a certain positive constant $c$, where we are using the fact that
\[
\norm{X^*}_{\p}\leq\frac{1}{\inf_{\Sigma}\rho(h)}\norm{X}\leq\frac{1}{\min_{[t_1,t_2]}\rho(t)}\norm{X}.
\]
Therefore, since $\lim_{t\ra+\infty}G(t)=+\infty$, from \eqref{luis.16} we conclude
\begin{equation}
\label{luis.18}
\Hess{\gamma}(X,X)\leq c\sqrt{{\gamma}G(\sqrt{{\gamma}})}\norm{X}^2+\pair{\tilde{\nabla}\tilde{\gamma},N}\pair{AX,X}
\end{equation}
for every tangent vector field $X\in T\Sigma$, outside a compact subset of $\Sigma$.

Assume now that $\sup_\Sigma|H|<+\infty$. Tracing \eqref{luis.18} we obtain
\[
\Delta\gamma \leq nc\sqrt{{\gamma}G(\sqrt{{\gamma}})}+nH\pair{\tilde{\nabla}\tilde{\gamma},N}
\]
outside a compact set. Furthermore, by \eqref{luis.17}
\[
|H\pair{\tilde{\nabla}\tilde{\gamma},N}|\leq\sup_\Sigma|H|\norm{\tilde{\nabla}\tilde{\gamma}}\leq c\sqrt{\gamma}
\leq c\sqrt{{\gamma}G(\sqrt{{\gamma}})}
\]
for some constant $c>0$. Thus, we conclude that, outside a compact subset of $\Sigma$,
\[
\Delta\gamma \leq c \sqrt{{\gamma}G(\sqrt{{\gamma}})}
\]
for some constant $c>0$,
which means that condition \eqref{gamma3} in Theorem \ref{maxprinc} is fulfilled for the Laplacian operator. Therefore,
the Omori-Yau maximum principle holds on $\Sigma$ for the Laplacian.

On the other hand, if we assume instead that $\sup_\Sigma\|A\|^2<+\infty$ then, using again \eqref{luis.17}, we have
\[
|\pair{\tilde{\nabla}\tilde{\gamma},N}\pair{AX,X}|\leq\norm{\tilde{\nabla}\tilde{\gamma}}\norm{A}\norm{X}^2
\leq c\sqrt{{\gamma}G(\sqrt{{\gamma}})}\norm{X}^2
\]
for a positive constant $c$, if $\gamma$ is sufficiently large. From \eqref{luis.18} we therefore obtain
\begin{equation}
\label{luis.19}
\Hess{\gamma}(X,X)\leq c\sqrt{{\gamma}G(\sqrt{{\gamma}})}\norm{X}^2,
\end{equation}
for every tangent vector field $X\in T\Sigma$, outside a compact subset of $\Sigma$. Thus, if $P$ is a positive
semi-definite operator with $\sup_\Sigma\mathrm{Tr}P<+\infty$, we conclude from here that
\[
L\gamma\leq nc\sup_\Sigma\mathrm{Tr}P\sqrt{{\gamma}G(\sqrt{{\gamma}})}
\]
if $\gamma$ is sufficiently large, which means that condition \eqref{gamma3} in Theorem \ref{maxprinc} is fulfilled for
the operator $L$. Therefore the Omori-Yau maximum principle holds on $\Sigma$ for $L$. We summarize the above discussion in the following:
\begin{corollary}
\label{OYproperly}
Let $\p^n$ be a complete, non-compact, Riemannian manifold whose radial sectional curvature satisfies the condition \eqref{sectP}.
Let $f:\Sigma^n\ra M^{n+1}=I\times_{\rho} \p^n$ be a properly immersed hypersurface contained in a slab.
\begin{enumerate}
\item  If $\sup_\Sigma|H|<+\infty$, then $\Sigma$ is complete and the Omori-Yau maximum principle holds on $\Sigma$ for the Laplacian.
\item If $\sup_\Sigma\|A\|<+\infty$, then $\Sigma$ is complete and the Omori-Yau maximum principle holds on $\Sigma$ for any semi-elliptic operator
$L=\mathrm{Tr}(P \circ \hess)$ with $\sup_\Sigma\mathrm{Tr}P<+\infty$.
\end{enumerate}
\end{corollary}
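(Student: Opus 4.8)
Since the statement merely records the computation carried out just above it, the plan is to cast that computation as a verification of the hypotheses of Theorem \ref{maxprinc}. If $\Sigma$ is compact the conclusion is immediate, so assume $\Sigma$ is non-compact, write $f(p)=(h(p),x(p))$ with $x=\pi_{\p}\circ f$, put $\hat\gamma=\hat r^{2}$ on $\p^n$ and let $\gamma=\hat\gamma\circ x$, which is $C^{2}$ on $\Sigma$ away from the preimage of the cut locus of $o$. Because $f$ is proper and $f(\Sigma)\subset[t_1,t_2]\times\p^n$, a divergent sequence in $\Sigma$ projects to a divergent sequence in $\p^n$, so $\gamma(p)\ra+\infty$ as $p\ra\infty$, which is \eqref{gamma1}. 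For \eqref{gamma2} I would use \eqref{luis.13} to get $\norm{\tilde\nabla\tilde\gamma}=2\sqrt\gamma/\rho(h)\le c\sqrt\gamma$ on the slab, where $\rho(h)\ge\min_{[t_1,t_2]}\rho>0$; since $\nabla\gamma$ is the component of $\tilde\nabla\tilde\gamma$ tangent to $\Sigma$ this forces $\norm{\nabla\gamma}\le c\sqrt\gamma$, and by \cite{pirise} it also forces $\Sigma$ to be complete.

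The core step is \eqref{gamma3}. I would compute $\Hess{\gamma}$ along $f$ by the Gauss formula, decomposing $X=X^{*}+\pair{X,T}T$ with $X^{*}={\pi_{\p}}_*X$ and using the warped-product identities \eqref{luis.15}, which give $\Hess{\tilde\gamma}(T,T)=0$ and reduce everything to \eqref{luis.16},
\[
\Hess{\gamma}(X,X)=\Hess{\hat\gamma}(X^{*},X^{*})-2\mathcal{H}(h)\pair{\nabla\gamma,X}\pair{T,X}+\pair{\tilde\nabla\tilde\gamma,N}\pair{AX,X}.
\]
The first term is controlled by the Hessian comparison theorem on $\p^n$ under \eqref{sectP} (here one uses that $G$ is even at the origin so that the comparison model is admissible, together with $\norm{X^{*}}_{\p}\le\norm{X}/\min_{[t_1,t_2]}\rho$), giving $\Hess{\hat\gamma}(X^{*},X^{*})\le c\sqrt{\gamma G(\sqrt\gamma)}\norm{X}^{2}$ for $\gamma$ large; the middle term is $\le c\sqrt\gamma\norm{X}^{2}\le c\sqrt{\gamma G(\sqrt\gamma)}\norm{X}^{2}$ since $|\mathcal{H}(h)|$ is bounded on the slab and $G\ra+\infty$. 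Thus, modulo the last term, $\Hess{\gamma}\le c\sqrt{\gamma G(\sqrt\gamma)}\pair{,}$ outside a compact set.

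It remains to absorb $\pair{\tilde\nabla\tilde\gamma,N}\pair{AX,X}$, and this is where the two hypotheses enter. In case (1) I would take the trace of \eqref{luis.16}: the last term contributes $nH\pair{\tilde\nabla\tilde\gamma,N}$, whose absolute value is at most $\sup_\Sigma|H|\,\norm{\tilde\nabla\tilde\gamma}\le c\sqrt\gamma\le c\sqrt{\gamma G(\sqrt\gamma)}$, so $\Delta\gamma\le c\sqrt{\gamma G(\sqrt\gamma)}$ off a compact set, which is \eqref{gamma3} with $P=I$, and Theorem \ref{maxprinc} yields the Omori--Yau principle for $\Delta$. In case (2) I would instead bound the last term pointwise by $|\pair{\tilde\nabla\tilde\gamma,N}|\,\norm{A}\,\norm{X}^{2}\le c\sqrt{\gamma G(\sqrt\gamma)}\norm{X}^{2}$ using $\sup_\Sigma\norm{A}<+\infty$, obtaining $\Hess{\gamma}\le c\sqrt{\gamma G(\sqrt\gamma)}\pair{,}$ outside a compact set; then for any positive semi-definite $P$ with $\sup_\Sigma\mathrm{Tr}P<+\infty$ one has $L\gamma=\mathrm{Tr}(P\circ\hess{\gamma})\le c\,\mathrm{Tr}P\,\sqrt{\gamma G(\sqrt\gamma)}$, which is \eqref{gamma3}, so Theorem \ref{maxprinc} applies to $L$.

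The one technical nuisance, and the step I expect to require the most care, is that $\hat r^{2}$ is only $C^{2}$ inside the cut locus of $o$ in $\p^n$, so $\gamma$ need not be globally $C^{2}$ on $\Sigma$. As in \cite{pirise} this is dealt with by Calabi's trick: near a point where $x(p)$ lies in the cut locus one replaces $o$ by a point pushed slightly forward along a minimizing geodesic, producing a smooth function that lies above $\gamma$, agrees with it to first order at the chosen point, and for which all the estimates above still hold; running the construction in the proof of Theorem \ref{maxprinc} with these local upper barriers produces the same Omori--Yau sequence, so the conclusion is unaffected.
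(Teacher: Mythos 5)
Your proposal is correct and follows essentially the same route as the paper: you use the extrinsic function $\gamma = \hat r^2 \circ x$, verify \eqref{gamma1} from properness plus the slab condition, \eqref{gamma2} from \eqref{luis.13} and the lower bound on $\rho$ over $[t_1,t_2]$, and then derive \eqref{gamma3} from the Hessian decomposition \eqref{luis.16}, tracing to absorb the $A$-term under $\sup_\Sigma|H|<+\infty$ for the Laplacian, or bounding pointwise under $\sup_\Sigma\|A\|<+\infty$ for general semi-elliptic $L$, exactly as in the paper. The paragraph on Calabi's trick addresses a regularity point the paper leaves implicit (it is handled by the general framework of \cite{pirise}), and is a sensible addition.
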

\begin{remark}\label{remarkmarco}
\rm{
From the equality
$$
\norm{A}^2=n^2H_1^2-n(n-1)H_2
$$
it follows that under the assumption $\inf_\Sigma H_2>-\infty$ the condition $\sup_\Sigma \norm{A}^2<+\infty$ is
equivalent to $\sup_\Sigma|H_1|<+\infty$.
}
\end{remark}

\section{Curvature estimates for hypersurfaces in warped products}
In this section we will derive some estimates for the $k$-mean curvatures of a hypersurface in a slab of a warped product space. Towards this aim we need the next computational
\begin{prop}
\label{propsigma}
Let $f: \Sigma^n \ra M^{n+1}=I \times_{\rho} \p^n$ be an isometric immersion into a warped product space. Let $h$ be the height function and define
$$
\sigma(t)=\int_{t_0}^t \rho(u) \di u.
$$
Then
\begin{equation}\label{lrh}
L_k h=\mathcal{H}(h)(c_kH_k-\pair{P_k\nabla h,\nabla h})+c_k\Theta H_{k+1},
\end{equation}
and
\begin{equation}\label{lrsigma}
L_k \sigma(h)=c_k\rho(h)(\mathcal{H}(h)H_k+\Theta H_{k+1}),
\end{equation}
where $c_k=(n-k){n \choose k}=(k+1){n \choose {k+1}}$, $\mathcal{H}(t)=\rho'(t)/\rho(t)$, and $\Theta=\pair{N,T}$ is the angle function.
\end{prop}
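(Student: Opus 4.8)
The plan is to compute the Hessian of the height function $h$ (equivalently, of $\sigma(h)$) explicitly along the immersion, and then apply the trace operator $L_k=\mathrm{Tr}(P_k\circ\hess)$ using the identities $\mathrm{Tr}P_k=(n-k)S_k$ and $\mathrm{Tr}(AP_k)=(k+1)S_{k+1}$ recalled in Section 2. Writing $c_k=(n-k){n\choose k}=(k+1){n\choose {k+1}}$ and $S_j={n\choose j}H_j$, these read $\mathrm{Tr}P_k=c_kH_k$ and $\mathrm{Tr}(AP_k)=c_kH_{k+1}$, which is exactly where the constants in \eqref{lrh} and \eqref{lrsigma} come from.

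First I would set up the ambient ingredients. Let $T$ denote the lift of $\partial_t$ to $I\times_{\rho}\p^n$, so that $T=\tilde{\nabla}\pi_I$ and hence $\nabla h=T^\top=T-\Theta N$ along $f$; in particular $\norm{\nabla h}^2=1-\Theta^2$ and $\pair{X,T}=\pair{X,\nabla h}$ for every $X\in T\Sigma$. The key geometric fact is that $\mathcal{K}=\rho(\pi_I)T=\tilde{\nabla}(\sigma\circ\pi_I)$ is a closed conformal vector field on $M^{n+1}$, i.e. $\tilde{\nabla}_X\mathcal{K}=\rho'(\pi_I)X$ for every $X$; equivalently $\tilde{\nabla}_XT=\mathcal{H}(\pi_I)(X-\pair{X,T}T)$, with $\mathcal{H}=\rho'/\rho$. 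This is the only nontrivial input and follows from a direct computation with the warped metric $\di t^2+\rho^2\pair{,}_{\p}$.

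Next, differentiating $\nabla h=T-\Theta N$ along a tangent vector $X\in T\Sigma$ and projecting onto $T\Sigma$, using the Weingarten formula $AX=-(\tilde{\nabla}_XN)^\top$ and $N^\top=0$, one obtains
$$\hess h(X)=(\tilde{\nabla}_XT)^\top+\Theta AX=\mathcal{H}(h)\big(X-\pair{X,\nabla h}\nabla h\big)+\Theta AX,$$
where I used the conformal identity above together with $\pair{X,T}=\pair{X,\nabla h}$. Taking the trace of $P_k\circ\hess h$, and noting $\mathrm{Tr}\big(P_k(\nabla h\otimes\nabla h)\big)=\pair{P_k\nabla h,\nabla h}$, this gives at once
$$L_kh=\mathcal{H}(h)\big(c_kH_k-\pair{P_k\nabla h,\nabla h}\big)+c_k\Theta H_{k+1},$$
which is \eqref{lrh}. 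For \eqref{lrsigma} I would use the chain rule: since $\sigma'=\rho$ and $\sigma''=\rho'$,
$$\hess\sigma(h)=\rho(h)\,\hess h+\rho'(h)\,\nabla h\otimes\nabla h,$$
so that $L_k\sigma(h)=\rho(h)L_kh+\rho'(h)\pair{P_k\nabla h,\nabla h}$; substituting \eqref{lrh} and using $\rho(h)\mathcal{H}(h)=\rho'(h)$, the terms involving $\pair{P_k\nabla h,\nabla h}$ cancel and one is left precisely with $L_k\sigma(h)=c_k\rho(h)\big(\mathcal{H}(h)H_k+\Theta H_{k+1}\big)$. (Alternatively, $\nabla\sigma(h)=\mathcal{K}^\top=\mathcal{K}-\rho(h)\Theta N$ and the conformal identity give directly $\hess\sigma(h)=\rho'(h)I+\rho(h)\Theta A$, whose trace against $P_k$ yields \eqref{lrsigma} immediately.)

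The computation is otherwise routine; the points demanding care are the sign conventions for the shape operator $A$ and the angle function $\Theta=\pair{N,T}$, the clean derivation of the conformal identity $\tilde{\nabla}_X\mathcal{K}=\rho'(\pi_I)X$, and the requirement that $P_k$ (hence $L_k$) be globally defined — which is guaranteed by the two-sidedness of $\Sigma$ as discussed in Section 2, the identities being pointwise in any case.
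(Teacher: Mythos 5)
Your proposal is correct and follows essentially the same route as the paper: compute $\hess h$ via $\nabla h=T-\Theta N$, the warped-product identity $\overline\nabla_XT=\mathcal{H}(\pi_I)(X-\pair{X,T}T)$, and the Weingarten formula, then trace against $P_k$ using $\mathrm{Tr}P_k=c_kH_k$ and $\mathrm{Tr}(P_kA)=c_kH_{k+1}$; for $\sigma(h)$ the paper likewise applies the chain rule to reduce to $\hess h$. Your alternative derivation of \eqref{lrsigma} through the closed conformal field $\mathcal{K}=\rho(\pi_I)T$ — giving $\hess\sigma(h)=\rho'(h)I+\rho(h)\Theta A$ at once — is a small streamlining the paper doesn't bother with, but the substance is identical.
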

\begin{proof}
The gradient of $\pi_{I} \in C^{\infty}(M)$ is $\overline{\nabla}\pi_{I}=T$, hence:
\begin{equation}
\label{luis.11}
\nabla h=(\overline{\nabla}\pi_{I})^\top=T-\Theta N.
\end{equation}
Recall that the Levi-Civita connection of a warped product satisfies
$$
\overline{\nabla}_X T=\mathcal{H}(X-\pair{X,T}T), \qquad \text{for any } X \in TM.
$$
Thus
$$\overline{\nabla}_X\nabla h=\mathcal{H}(h)(X-\pair{X,T}T)-X(\Theta)N+\Theta AX,
$$
for any $X \in T\Sigma$. Then
\begin{equation}\label{hessh}
\hess{h}(X)=\nabla_X\nabla h=\mathcal{H}(h)(X-\pair{X,\nabla h}\nabla h)+\Theta AX,
\end{equation}
where $\nabla$ denotes the Levi-Civita connection on $\Sigma^n$. Let $\left\{e_1,...,e_n\right\}$ be a local orthonormal frame on
$\Sigma$. Then, using the expressions of the traces of $P_k$ and $P_kA$
\begin{align*}
L_k h = &\mathrm{Tr}(P_k \circ \hess{h})=\sum_i \pair{P_k \hess{h}(e_i),e_i}\\
= & \mathcal{H}(h)\big(\mathrm{Tr}P_k-\pair{P_k\nabla h,\nabla h}\big)+\Theta \mathrm{Tr}(P_k A)\\
= & \mathcal{H}(h)(c_kH_k-\pair{P_k\nabla h,\nabla h})+c_k\Theta H_{k+1}.
\end{align*}
On the other hand, since $\nabla \sigma(h)=\rho(h)\nabla h$, we have
$$
\hess{\sigma(h)}(X)=\rho'(h)\pair{\nabla h,X}\nabla h+\rho(h)\hess{h}(X).
$$
Therefore
\begin{align*}
L_k \sigma(h)=&\mathrm{Tr}(P_k \circ \hess{\sigma(h)})=\sum_i \pair{P_k \hess{\sigma(h)}(e_i),e_i}\\
=&\rho'(h)\pair{P_k \nabla h,\nabla h}+\rho(h)\mathcal{H}(h)\big(\mathrm{Tr}P_k-\pair{P_k\nabla h,\nabla h}\big)\\
&+\rho(h)\Theta\mathrm{Tr}(P_k A)\\
= & c_k\rho(h)(\mathcal{H}(h)H_k+\Theta H_{k+1}).
\end{align*}
\end{proof}

As a first application of the computations above, we derive the following:
\begin{theorem}
\label{thmC}
Let $f: \Sigma^n \ra I \times_{\rho} \p^n$ be an immersed hypersurface. If the Omori-Yau maximum principle holds on
$\Sigma$ for the Laplacian and $h^*=\sup_\Sigma h<+\infty$, then
\[
\sup_\Sigma|H|\geq\inf_\Sigma\mathcal{H}(h).
\]
\end{theorem}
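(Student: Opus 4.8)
The plan is to combine the formula for $\Delta h = L_0 h$ furnished by Proposition \ref{propsigma} with the Omori-Yau maximum principle applied to the height function $h$ itself.

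First I would specialize identity \eqref{lrh} to $k=0$: since $P_0=I$, $H_0=1$ and $c_0=n$, it reads
\[
\Delta h=\mathcal{H}(h)\bigl(n-\norm{\nabla h}^2\bigr)+n\Theta H,
\]
where $H=H_1$ is the mean curvature and $\Theta=\pair{N,T}$ is the angle function, so that $|\Theta|\le 1$ by the Cauchy--Schwarz inequality; moreover from \eqref{luis.11} one has $\norm{\nabla h}^2=1-\Theta^2\le 1$, which I will use to keep the factor $n-\norm{\nabla h}^2$ bounded away from zero. If $\sup_\Sigma|H|=+\infty$ there is nothing to prove, so we may assume it is finite.

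Next, since $h^*=\sup_\Sigma h<+\infty$ and the Omori-Yau maximum principle holds on $\Sigma$ for $\Delta$, there is a sequence $\{p_j\}\subset\Sigma$ with $h(p_j)>h^*-1/j$, $\norm{\nabla h(p_j)}<1/j$ and $\Delta h(p_j)<1/j$. Evaluating the displayed identity at $p_j$ and isolating the term containing $\mathcal{H}$ yields
\[
\mathcal{H}(h(p_j))\bigl(n-\norm{\nabla h(p_j)}^2\bigr)=\Delta h(p_j)-n\Theta(p_j)H(p_j)<\frac1j+n\sup_\Sigma|H|,
\]
using $|\Theta(p_j)|\le 1$ and $|H(p_j)|\le\sup_\Sigma|H|$. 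For $j\ge 2$ the coefficient $n-\norm{\nabla h(p_j)}^2$ is positive (it tends to $n$ as $j\to\infty$), so the left-hand side is at least $\inf_\Sigma\mathcal{H}(h)\,\bigl(n-\norm{\nabla h(p_j)}^2\bigr)$; letting $j\to\infty$ then gives $n\inf_\Sigma\mathcal{H}(h)\le n\sup_\Sigma|H|$, which is the claimed estimate.

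The argument is short once Proposition \ref{propsigma} is available; the only place needing a little care is ensuring the coefficient $n-\norm{\nabla h}^2$ multiplying $\mathcal{H}(h)$ stays bounded away from zero along the maximizing sequence, which is why I record $\norm{\nabla h}^2\le 1$ at the outset. Note that the final chain of inequalities is valid regardless of the sign (or finiteness) of $\inf_\Sigma\mathcal{H}(h)$, so no case distinction is needed there.
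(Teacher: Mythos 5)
Your proof is correct and follows essentially the same route as the paper's: specialize the identity \eqref{lrh} to $k=0$ to obtain $\Delta h=\mathcal{H}(h)(n-\norm{\nabla h}^2)+n\Theta H_1$, apply the Omori-Yau maximum principle to $h$, and bound the resulting expression using $|\Theta|\le1$ and $\norm{\nabla h}^2\le1$. The only cosmetic difference is in the last step: the paper lets $j\to\infty$ first to get $\sup_\Sigma|H|\ge\mathcal{H}(h^*)$ and then observes $\mathcal{H}(h^*)\ge\inf_\Sigma\mathcal{H}(h)$, whereas you replace $\mathcal{H}(h(p_j))$ by $\inf_\Sigma\mathcal{H}(h)$ before passing to the limit; both are valid and yield the same conclusion.
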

In particular, and as an application of Corollary \ref{OYproperly}, we deduce the following result, which generalizes
Theorem 2 in \cite{aliasdajczer2}.
\begin{corollary}
\label{corothmC}
Let $\p^n$ be a complete, non-compact, Riemannian manifold whose radial sectional curvature satisfies condition
\eqref{sectP}. If $f:\Sigma^n\ra M^{n+1}=I\times_{\rho} \p^n$ is a properly immersed hypersurface contained in
a slab, then
\begin{equation}
\label{luis.22}
\sup_\Sigma|H|\geq \inf_\Sigma\mathcal{H}(h).
\end{equation}
In other words, there is no properly immersed hypersurface contained in a slab $[t_1,t_2]\times\p^n$ with
\[
\sup_\Sigma|H|<\inf_{[t_1,t_2]}\mathcal{H}(t).
\]
\end{corollary}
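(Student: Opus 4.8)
The plan is to deduce Corollary \ref{corothmC} by checking that the hypotheses of Theorem \ref{thmC} are automatically satisfied in this setting, the crucial input being the extrinsic Omori--Yau principle of Corollary \ref{OYproperly}. First I would dispose of the trivial case $\sup_\Sigma|H|=+\infty$: since $f(\Sigma)\subset[t_1,t_2]\times\p^n$, the height function $h=\pi_I\circ f$ takes its values in the compact interval $[t_1,t_2]$, so $\mathcal{H}(h)$ is bounded and $\inf_\Sigma\mathcal{H}(h)$ is a finite number; hence \eqref{luis.22} holds at once. So from now on I may assume $\sup_\Sigma|H|<+\infty$.

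Under this assumption the hypotheses of part (1) of Corollary \ref{OYproperly} are met verbatim --- $\p^n$ is complete, non-compact with radial sectional curvature satisfying \eqref{sectP}, and $f$ is properly immersed and contained in a slab --- so that $\Sigma$ is complete and the Omori--Yau maximum principle holds on $\Sigma$ for the Laplacian. Moreover $h^*=\sup_\Sigma h\le t_2<+\infty$, again by the slab containment. These are exactly the two assumptions required by Theorem \ref{thmC}, so that theorem yields $\sup_\Sigma|H|\ge\inf_\Sigma\mathcal{H}(h)$, i.e. \eqref{luis.22}. For the reformulation at the end of the statement it then suffices to note that $h(p)\in[t_1,t_2]$ for every $p\in\Sigma$ forces $\mathcal{H}(h(p))\ge\inf_{[t_1,t_2]}\mathcal{H}(t)$, hence $\inf_\Sigma\mathcal{H}(h)\ge\inf_{[t_1,t_2]}\mathcal{H}(t)$; combined with \eqref{luis.22} this rules out any properly immersed hypersurface contained in $[t_1,t_2]\times\p^n$ with $\sup_\Sigma|H|<\inf_{[t_1,t_2]}\mathcal{H}(t)$.

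Thus for the corollary itself I do not expect any genuine obstacle; the real content sits in Theorem \ref{thmC} and Corollary \ref{OYproperly}, both of which are available. For completeness I would argue Theorem \ref{thmC} as follows. Apply the Omori--Yau principle for the Laplacian to $u=h$ (legitimate because $h^*<+\infty$) to obtain a sequence $\{p_j\}$ with $h(p_j)>h^*-1/j$, $\norm{\nabla h(p_j)}<1/j$ and $\Delta h(p_j)<1/j$. Specializing \eqref{lrh} to $k=0$ (so $c_0=n$, $H_0=1$, $P_0=I$) gives $\Delta h=\mathcal{H}(h)(n-\norm{\nabla h}^2)+n\Theta H$; evaluating at $p_j$ and using $\Delta h(p_j)<1/j$ yields $n\Theta(p_j)H(p_j)<1/j-\mathcal{H}(h(p_j))(n-\norm{\nabla h(p_j)}^2)$. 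One may assume $\delta:=\inf_\Sigma\mathcal{H}(h)>0$, else the conclusion of Theorem \ref{thmC} is trivial; then for $j$ large $\mathcal{H}(h(p_j))(n-\norm{\nabla h(p_j)}^2)\ge\delta(n-1/j^2)$, while $n\Theta(p_j)H(p_j)\ge-n\sup_\Sigma|H|$ since $|\Theta(p_j)|\le1$ by Cauchy--Schwarz. Letting $j\to\infty$ gives $-n\sup_\Sigma|H|\le-n\delta$, that is, $\sup_\Sigma|H|\ge\inf_\Sigma\mathcal{H}(h)$. The one point to watch, and the reason the argument survives without a two-sidedness hypothesis, is that although neither $\Theta=\pair{N,T}$ nor $H$ is globally defined when $\Sigma$ fails to be two-sided, the product $\Theta H$ and the quantity $|H|$ are, so the displayed identity and all the estimates involve only globally defined functions on $\Sigma$.
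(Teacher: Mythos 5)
Your proposal is correct and follows the paper's own route almost verbatim: dispose of the case $\sup_\Sigma|H|=+\infty$, invoke part (1) of Corollary \ref{OYproperly} to get the Omori--Yau principle for the Laplacian, and apply Theorem \ref{thmC}; your sketch of Theorem \ref{thmC} is likewise the paper's argument, with only the cosmetic difference that the paper passes to the limit $\mathcal{H}(h(q_j))\to\mathcal{H}(h^*)$ and then uses $\mathcal{H}(h^*)\geq\inf_\Sigma\mathcal{H}(h)$, whereas you bound $\mathcal{H}(h(p_j))\geq\delta$ from the start. Your closing remark about $\Theta H$ and $|H|$ being globally defined in the absence of two-sidedness is a correct observation that the paper leaves implicit.
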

For the proof of Corollary \ref{corothmC}, observe that if $\sup_\Sigma|H|=+\infty$ then the inequality \eqref{luis.22} trivially holds.
On the other hand, if $\sup_\Sigma|H|<+\infty$ then by Corollary \ref{OYproperly} we know that the Omori-Yau maximum principle holds
on $\Sigma$ and the result follows from Theorem \ref{thmC}.

\begin{proof}[Proof of Theorem \ref{thmC}]
Since $h$ is bounded from above, we may find a sequence $\{q_j\} \subset \Sigma^n$ such that
\begin{eqnarray*}
\lim_{j\ra +\infty}h(q_j) & = & h^*:=\sup h ,\\
\norm{\nabla h(q_j)}^2& = & 1-\Theta^2(q_j) < \Big(\frac{1}{j}\Big)^2,\\
\Delta h(q_j) & = & \mathcal{H}(h(q_j))(n-\norm{\nabla h(q_j)}^2)+nH(q_j)\Theta(q_j)<\frac{1}{j}.
\end{eqnarray*}
Then
$$
\frac{1}{j}>\Delta h(q_j)\geq \mathcal{H}(h(q_j))(n-\norm{\nabla h(q_j)}^2)-n\sup_{\Sigma}|H|.
$$
Making $j\ra+\infty$ we get
$$
0\geq\mathcal{H}(h^*)-\sup_\Sigma |H|,
$$
so that
$$
\sup_\Sigma |H|\geq\mathcal{H}(h^*)\geq \inf_\Sigma\mathcal{H}(h).
$$
\end{proof}

\begin{corollary}
Let $\p^n$ be a complete, non-compact, Riemannian manifold whose radial sectional curvature satisfies condition
\eqref{sectP}. If $f:\Sigma^n\ra M^{n+1}=I\times_{\mathrm{e}^t} \p^n$ is a parabolic, properly immersed hypersurface with constant mean curvature $|H|\leq 1$ contained in
a slab, then $f(\Sigma)$ is slice.
\end{corollary}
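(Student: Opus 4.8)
The plan is to apply Proposition \ref{propsigma} with $k=0$ to the primitive $\sigma(h)$ of the warping function and then to invoke parabolicity. Since here $\rho(t)=\mathrm{e}^t$, we have $\mathcal{H}(t)=\rho'(t)/\rho(t)\equiv 1$ and $\sigma(t)=\int_{t_0}^t\mathrm{e}^u\,\di u=\mathrm{e}^t-\mathrm{e}^{t_0}$, so formula \eqref{lrsigma} reduces to
\[
\Delta\sigma(h)=n\rho(h)\big(\mathcal{H}(h)+\Theta H\big)=n\,\mathrm{e}^{h}\big(1+\Theta H\big).
\]
Now $|\Theta|\le 1$ because $N$ and $T$ are unit vectors, and $|H|\le 1$ by hypothesis, hence $1+\Theta H\ge 1-|\Theta|\,|H|\ge 0$ and therefore $\Delta\sigma(h)\ge 0$ on all of $\Sigma$; in other words $\sigma(h)$ is subharmonic.

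Next I would use that $f$ is contained in a slab, say $f(\Sigma)\subset[t_1,t_2]\times\p^n$, so that $t_1\le h\le t_2$ and $\sigma(h)$ is bounded, in particular bounded from above by $\mathrm{e}^{t_2}-\mathrm{e}^{t_0}$. Since $\Sigma$ is parabolic, a subharmonic function bounded from above must be constant; thus $\sigma(h)$ is constant, and as $\sigma$ is a strictly increasing diffeomorphism onto its image this forces $h\equiv c$ for some constant $c\in[t_1,t_2]$.

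Finally I would upgrade ``$h$ constant'' to ``$f(\Sigma)$ is the slice $\p_c$''. Because $h=\pi_I\circ f$ is constant, the differential $df_p$ carries $T_p\Sigma$ into $T_{f(p)}\p_c$ for every $p\in\Sigma$; being injective between $n$-dimensional spaces it is an isomorphism onto $T_{f(p)}\p_c$, so $f\colon\Sigma\to\p_c$ is a local isometry for the induced metrics, and $\p_c$ is complete (its metric is $\rho(c)^2$ times the complete metric of $\p^n$). Provided $\Sigma$ is complete---which is ensured here since $\sup_\Sigma|H|<+\infty$, so Corollary \ref{OYproperly},(1) applies, or else is built into the notion of parabolicity---$f$ is then a Riemannian covering of the connected manifold $\p_c$, hence onto, so $f(\Sigma)=\p_c$.

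I do not expect a genuine obstacle: the substantive content is the one-line sign estimate $1+\Theta H\ge 0$ together with the definition of parabolicity. The only step requiring a moment's care is the last one, and it is worth noting that the hypotheses that $f$ be properly immersed and that $\p^n$ satisfy \eqref{sectP} enter only to guarantee the completeness of $\Sigma$ via Corollary \ref{OYproperly}, while the constancy of $H$ is not actually used beyond the bound $|H|\le 1$; of course, once $f(\Sigma)$ is a slice it automatically has $H\equiv 1$ with respect to $-\partial/\partial t$.
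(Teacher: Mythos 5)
Your proof is correct and rests on the same basic mechanism as the paper's --- subharmonicity of $\sigma(h)=e^h$ plus parabolicity --- but you reach the subharmonicity by a more elementary and in fact more economical observation. The paper first invokes Corollary \ref{corothmC} (which requires the properness of $f$ and the curvature bound \eqref{sectP}) to conclude that the constant $|H|$ must equal $1$, then fixes the orientation with $H=1$ and uses $\Theta\ge -1$ to get $\Delta e^h=ne^h(1+\Theta)\ge 0$. You instead note that $\Theta H$ is orientation-independent and that $1+\Theta H\ge 1-|\Theta|\,|H|\ge 0$ follows directly from $|\Theta|\le 1$ and $|H|\le 1$, so $\Delta\sigma(h)\ge 0$ without any appeal to Corollary \ref{corothmC}. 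As you correctly point out, this makes the properness and sectional-curvature hypotheses logically idle apart from ensuring completeness of $\Sigma$ (which is in any case built into the usual notion of parabolicity), and it shows that the constancy of $H$ is used only through the pointwise bound $|H|\le 1$. Your concluding covering-map argument promoting ``$h\equiv c$'' to ``$f(\Sigma)=\p_c$'' is correct, although the paper's convention is that ``$f(\Sigma)$ is a slice'' simply means $h$ is constant, so that extra step is not strictly needed. One nuance worth keeping in mind: if you do intend the stronger conclusion $f(\Sigma)=\p_c$, completeness of $\Sigma$ is essential there, so the remark that it is ``built into parabolicity'' is doing genuine work in your version of the final step.
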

For the proof of this corollary observe that from Corollary \ref{corothmC} it must be $|H|=1$. Choose the orientation so that $H=1$. In this case $\sigma(h)=\mathrm{e}^h$ and by \eqref{lrsigma}
$$
\Delta \mathrm{e}^h=n\mathrm{e}^h(1+\Theta)\geq0.
$$
Therefore, since $\mathrm{e}^h\leq\mathrm{e}^{h^*}$ it follows that $\mathrm{e}^h$ is a subharmonic function on $\Sigma$ which is bounded from above. The conclusion now follows from parabolicity.

For the next results, we normalize the operators $P_k$ to the following
$$
\hat{P}_k=\frac{1}{H_k} P_k
$$
where, of course, we are assuming $H_k>0$. We will denote by $\hat{L}_k$ the corresponding differential operator, that is
$$
\hat{L}_k=\mathrm{Tr}(\hat{P}_k\circ\hess).
$$
Observe that
$$
\mathrm{Tr}(\hat{P_k})=c_k
$$
so that the operators $\hat{P}_k$ have trace always bounded from above. With this preparation we state the next
\begin{theorem}
Let $f: \Sigma^n \ra I \times_{\rho} \p^n$ be an immersed hypersurface with $H_2>0$. If the Omori-Yau
maximum principle holds on $\Sigma$ for $\hat{L}_1$ and $h^*=\sup_\Sigma h<+\infty$, then
\[
\sup_\Sigma H^{1/2}_2\geq\inf_\Sigma\mathcal{H}(h).
\]
\end{theorem}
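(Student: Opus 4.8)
The plan is to imitate the proof of Theorem \ref{thmC}, with the Laplacian replaced by the normalized operator $\hat{L}_1$ and with Proposition \ref{propsigma} used in the case $k=1$. First I would record the structural facts behind the hypothesis: since $H_2>0$, the inequality $H_1^2\ge H_2$ shows that the mean curvature never vanishes, so $\Sigma$ is two-sided; fixing the orientation so that $H_1>0$, the tensor $P_1$ is positive definite (by the computation recalled in Section~2, e.g. Lemma~3.10 of \cite{elbert}), hence $\hat{P}_1=H_1^{-1}P_1$ is a positive-definite symmetric operator with $\mathrm{Tr}\,\hat{P}_1=c_1=n(n-1)$ constant; in particular $\hat{L}_1$ is an elliptic operator of the type appearing in Theorem \ref{maxprinc}. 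I would then dispose of two trivial cases: if $\sup_\Sigma H_2^{1/2}=+\infty$ there is nothing to prove; and since $\sup_\Sigma H_2^{1/2}\ge 0$, if $\inf_\Sigma\mathcal{H}(h)\le 0$ there is again nothing to prove. So from now on assume $H_2$ is bounded above and $\mathcal{H}(h(p))\ge\inf_\Sigma\mathcal{H}(h)>0$ for every $p\in\Sigma$.

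Dividing the case $k=1$ of \eqref{lrh} by $H_1$ gives the identity
\[
\hat{L}_1 h=\frac{1}{H_1}L_1 h=\mathcal{H}(h)\Big(c_1-\frac{\pair{P_1\nabla h,\nabla h}}{H_1}\Big)+c_1\,\Theta\,\frac{H_2}{H_1}.
\]
By hypothesis the Omori--Yau maximum principle holds on $\Sigma$ for $\hat{L}_1$, so applying it to $h$ (which has $\sup_\Sigma h=h^*<+\infty$) produces a sequence $\{q_j\}\subset\Sigma$ with $h(q_j)\to h^*$, $\norm{\nabla h(q_j)}<1/j$ and $\hat{L}_1 h(q_j)<1/j$. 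From $0\le\pair{P_1 v,v}\le(\mathrm{Tr}\,P_1)\norm{v}^2$ and $\mathrm{Tr}\,P_1=c_1 H_1$ we get $0\le\pair{P_1\nabla h(q_j),\nabla h(q_j)}/H_1(q_j)\le c_1\norm{\nabla h(q_j)}^2<c_1/j^2$; moreover $|\Theta|\le 1$ everywhere, and the Newton inequality $H_1\ge H_2^{1/2}$ gives $H_2/H_1\le H_2^{1/2}\le\sup_\Sigma H_2^{1/2}$. Substituting these bounds into the identity at $q_j$ and using $\mathcal{H}(h(q_j))\ge\inf_\Sigma\mathcal{H}(h)>0$, we obtain for $j\ge 2$
\[
\frac{1}{j}>\hat{L}_1 h(q_j)\ge\Big(\inf_\Sigma\mathcal{H}(h)\Big)c_1\Big(1-\frac{1}{j^2}\Big)-c_1\sup_\Sigma H_2^{1/2}.
\]
Letting $j\to+\infty$ and dividing by $c_1>0$ yields $\inf_\Sigma\mathcal{H}(h)\le\sup_\Sigma H_2^{1/2}$, which is the assertion.

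The argument is essentially routine once Proposition \ref{propsigma} is available; the only points requiring care are the normalization and the sign of $\mathcal{H}$. Because the operator is $\hat{L}_1=H_1^{-1}L_1$, the curvature term in the identity is $c_1\Theta H_2/H_1$ rather than $c_1\Theta H_2$, so one must invoke $H_1\ge H_2^{1/2}$ to bound it by $\sup_\Sigma H_2^{1/2}$; this is precisely why the conclusion features $H_2^{1/2}$ and not $H_2$. The sign issue — $\mathcal{H}$ need not be positive, and $h^*$ need not be attained, so $\mathcal{H}(h(q_j))$ need not converge — is neutralized by the two preliminary reductions, which allow us to replace $\mathcal{H}(h(q_j))$ by the constant lower bound $\inf_\Sigma\mathcal{H}(h)>0$ throughout.
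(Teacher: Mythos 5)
Your proof is correct and follows essentially the same strategy as the paper: apply the Omori--Yau maximum principle for $\hat{L}_1$ to a function approaching $h^*$, use the $k=1$ case of Proposition~\ref{propsigma}, bound the angle term via $|\Theta|\le 1$ together with the Garding inequality $H_2/H_1\le H_2^{1/2}$, and pass to the limit. The one technical difference is that the paper applies the maximum principle to $\sigma\circ h$ (formula~\eqref{lrsigma}), which cancels the $\pair{P_1\nabla h,\nabla h}$ term automatically, whereas you apply it directly to $h$ (formula~\eqref{lrh}) and dispose of that term with the estimate $0\le\pair{P_1 v,v}\le(\mathrm{Tr}\,P_1)\norm{v}^2$ and $\norm{\nabla h(q_j)}\to 0$ --- both are equally valid.
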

\begin{proof}
We may assume without loss of generality that $\sup_\Sigma H_2<+\infty$ and $\inf_\Sigma\mathcal{H}(h)\geq 0$. Otherwise the desired conclusion trivially holds.
Using the basic inequality $H_1\geq \sqrt{H_2}$ we know that $H_1>0$ and $\hat{L}_1$ is a well defined elliptic operator.
Since $h$ is bounded from above and $\sup_\Sigma\sigma(h)=\sigma(h^*)$, we may find a sequence $\{q_j\} \subset \Sigma^n$ such that
\begin{eqnarray*}
\lim_{j\ra +\infty}(\sigma\circ h)(q_j) & = & \sigma(h^*):=\sup (\sigma\circ h) ,\\
\norm{\nabla (\sigma\circ h)(q_j)}^2& =&\rho(h(q_j))^2(1-\Theta^2(q_j)) <\Big(\frac{1}{j}\Big)^2,\\
\hat{L}_1(\sigma\circ h)(q_j)&<&\frac{1}{j}.
\end{eqnarray*}
Then we have
\begin{align*}
\frac 1j >&\hat{L}_1 (\sigma\circ h)(q_j)= n(n-1)\rho(h(q_j))\left(\mathcal{H}(h(q_j))+\Theta(q_j)\frac{H_2}{H_1}(q_j)\right)\\
\geq& n(n-1)\rho(h(q_j))\left(\mathcal{H}(h(q_j))-\frac{H_2}{H_1}(q_j)\right)\\
\geq& n(n-1)\rho(h(q_j))\left(\mathcal{H}(h(q_j))-\sqrt{H_2}(q_j)\right)
\end{align*}
Observe that
$$\lim_{j\ra +\infty}(\sigma\circ h)(q_j) = \sigma(h^*):=\sup (\sigma\circ h)$$
implies $\lim_{j\ra+\infty}h(q_j)=h^*$, because $\sigma(t)$ is strictly increasing.
Making $j\ra+\infty$, and, if necessary, up to passing to a subsequence we get
$$
0\geq \mathcal{H}(h^*)-\sup_\Sigma\sqrt{H_2}.
$$
So
$$
\sup_\Sigma H_2^{1/2}\geq\mathcal{H}(h^*)\geq \inf_\Sigma\mathcal{H}(h).
$$
\end{proof}
As a consequence of the previous theorem and of Corollary \ref{OYproperly} (see also Remark \ref{remarkmarco}) we have
\begin{corollary}
Let $\p^n$ be a complete, non-compact, Riemannian manifold whose radial sectional curvature satisfies condition
\eqref{sectP}. If $f:\Sigma^n\ra M^{n+1}=I\times_{\rho} \p^n$ is a properly immersed hypersurface with $H_2>0$,
$\sup_\Sigma|H_1|<+\infty$ and contained in a slab, then
\[
\sup_\Sigma H^{1/2}_2\geq \inf_\Sigma\mathcal{H}(h).
\]
In other words, there is no properly immersed hypersurface with $H_2>0$ and $\sup_\Sigma |H_1|<+\infty$ contained in a slab
$[t_1,t_2]\times\p^n$ with
\[
\sup_\Sigma H^{1/2}_2<\inf_{[t_1,t_2]}\mathcal{H}(t).
\]
\end{corollary}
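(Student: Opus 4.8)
The plan is to reduce the statement to the previous theorem by checking that all of its hypotheses are met. First, if $\sup_\Sigma H_2^{1/2}=+\infty$ there is nothing to prove, so we may assume $\sup_\Sigma H_2<+\infty$. Since $H_2>0$ on $\Sigma$, the equality $\norm{A}^2=n^2H_1^2-n(n-1)H_2$ recorded in Remark \ref{remarkmarco} gives $\norm{A}^2\leq n^2H_1^2$, and therefore $\sup_\Sigma\norm{A}<+\infty$ because $\sup_\Sigma|H_1|<+\infty$ by hypothesis.

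Next I would invoke Corollary \ref{OYproperly}, part (2): since $\p^n$ satisfies the radial sectional curvature bound \eqref{sectP}, $f$ is properly immersed and contained in a slab, and $\sup_\Sigma\norm{A}<+\infty$, it follows that $\Sigma$ is complete and the Omori-Yau maximum principle holds on $\Sigma$ for every semi-elliptic operator $L=\mathrm{Tr}(P\circ\hess)$ with $\sup_\Sigma\mathrm{Tr}P<+\infty$. The operator to which I want to apply this is $\hat{L}_1=\mathrm{Tr}(\hat{P}_1\circ\hess)$ with $\hat{P}_1=H_1^{-1}P_1$: the assumption $H_2>0$ forces $H_1>0$ after a choice of orientation, via $H_1^2\geq H_2$, so $\hat{P}_1$ is well defined; moreover $H_2>0$ makes $P_1$ positive definite (as recalled in Section 2), hence $\hat{P}_1$ is positive definite and $\hat{L}_1$ is semi-elliptic; and $\mathrm{Tr}(\hat{P}_1)=c_1=n(n-1)$ is constant, so $\sup_\Sigma\mathrm{Tr}(\hat{P}_1)<+\infty$. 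Consequently the Omori-Yau maximum principle holds on $\Sigma$ for $\hat{L}_1$.

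With this in hand, the first displayed inequality follows immediately from the previous theorem, since $f$ being contained in a slab $[t_1,t_2]\times\p^n$ gives $h^*=\sup_\Sigma h\leq t_2<+\infty$, and the hypotheses $H_2>0$ and ``Omori-Yau for $\hat{L}_1$'' are exactly those just verified. For the reformulation, note that $h(\Sigma)\subset[t_1,t_2]$ implies $\inf_\Sigma\mathcal{H}(h)\geq\inf_{[t_1,t_2]}\mathcal{H}(t)$, hence $\sup_\Sigma H_2^{1/2}\geq\inf_{[t_1,t_2]}\mathcal{H}(t)$, which rules out the existence of such a hypersurface with $\sup_\Sigma H_2^{1/2}<\inf_{[t_1,t_2]}\mathcal{H}(t)$.

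As for difficulties, there is essentially no analytic obstacle: the real work was already carried out in Corollary \ref{OYproperly} and in the previous theorem. The only point requiring a little attention is the bookkeeping that lets us invoke Corollary \ref{OYproperly}(2) — namely deducing $\sup_\Sigma\norm{A}<+\infty$ from $\sup_\Sigma|H_1|<+\infty$ together with $H_2>0$, which in turn requires first disposing of the trivial case $\sup_\Sigma H_2=+\infty$ — and the observation that it is the \emph{normalized} operator $\hat{L}_1$, rather than $L_1$ itself, that is fed into both statements, so that the trace bound $\sup_\Sigma\mathrm{Tr}\hat{P}_1<+\infty$ comes for free from $\mathrm{Tr}\hat{P}_1\equiv c_1$.
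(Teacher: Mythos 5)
Your argument is correct and is exactly the route the paper intends (though the paper compresses it to a single sentence citing the previous theorem, Corollary \ref{OYproperly}, and Remark \ref{remarkmarco}). One small superfluity: you need not separately dispose of the case $\sup_\Sigma H_2^{1/2}=+\infty$, since $H_2>0$ already gives $\norm{A}^2=n^2H_1^2-n(n-1)H_2\leq n^2H_1^2$ unconditionally (and anyway $H_1^2\geq H_2$ together with $\sup_\Sigma|H_1|<+\infty$ forces $\sup_\Sigma H_2<+\infty$); the rest of the bookkeeping — $\sup_\Sigma\norm{A}<+\infty$, semi-ellipticity of $\hat{L}_1$ from $H_2>0$, the trace bound $\mathrm{Tr}\,\hat P_1\equiv c_1$, and $h^*\leq t_2<+\infty$ — is exactly as needed.
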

In the next theorem, the existence of an elliptic point enable us to guarantee that $H_{k-1}$ is strictly positive and to guarantee ellipticity of the operator $\hat{L}_{k-1}$. Reasoning as in the previous results gives the next
\begin{theorem}
Let $f: \Sigma^n \ra I \times_{\rho} \p^n$ be an immersed hypersurface having an elliptic point, with $H_k>0$. If the Omori-Yau
maximum principle holds on $\Sigma$ for $\hat{L}_{k-1}$, with $3\leq k\leq n$, and $h^*<+\infty$ then
\[
\sup_\Sigma H^{1/k}_{k}\geq\inf_\Sigma\mathcal{H}(h).
\]
\end{theorem}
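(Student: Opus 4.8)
The plan is to reason exactly as in the two preceding theorems of this section, now replacing the index $1$ used in the $H_2$ case by $k-1$: one applies the Omori--Yau maximum principle for $\hat L_{k-1}$ to the function $\sigma(h)$, and uses the full Garding chain \eqref{garding} in place of the single Newton inequality $H_2\le H_1^2$. First I would dispose of the trivial cases: since $H_k>0$, if $\sup_\Sigma H_k^{1/k}=+\infty$ or $\inf_\Sigma\mathcal H(h)\le 0$ the asserted inequality is immediate, so assume from now on that $\sup_\Sigma H_k^{1/k}<+\infty$ and $\inf_\Sigma\mathcal H(h)>0$. The key preliminary point --- indeed the only place where the hypotheses genuinely enter beyond the $H_2$ case --- is that the existence of an elliptic point together with $H_k>0$ forces, by the connectedness argument recalled in Section 2, the whole of $\Sigma$ to remain in the Garding cone. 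Consequently $H_1,\dots,H_{k-1}>0$ on all of $\Sigma$, $P_{k-1}$ is positive definite, $\hat L_{k-1}=\frac{1}{H_{k-1}}L_{k-1}$ is a well-defined elliptic trace operator with constant trace $\mathrm{Tr}\hat P_{k-1}=c_{k-1}$, so that Theorem \ref{maxprinc} applies to it and the hypothesis that the Omori--Yau principle holds for $\hat L_{k-1}$ is meaningful, and \eqref{garding} is valid pointwise, yielding $0<\frac{H_k}{H_{k-1}}\le H_k^{1/k}$ on $\Sigma$.

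Next I would apply the Omori--Yau maximum principle for $\hat L_{k-1}$ to $u=\sigma(h)$, which is bounded from above with $\sup_\Sigma\sigma(h)=\sigma(h^*)$, since $\sigma$ is strictly increasing and $h\le h^*<+\infty$. This produces a sequence $\{q_j\}\subset\Sigma$ with $\sigma(h(q_j))\to\sigma(h^*)$ --- hence $h(q_j)\to h^*$, as $\sigma$ is strictly increasing --- and $\hat L_{k-1}\sigma(h)(q_j)<1/j$. Dividing formula \eqref{lrsigma}, written for the index $k-1$, by $H_{k-1}$ gives
\[
\hat L_{k-1}\sigma(h)=c_{k-1}\rho(h)\Big(\mathcal H(h)+\Theta\,\frac{H_k}{H_{k-1}}\Big),
\]
so that, evaluating at $q_j$ and using $\Theta\ge-1$, $\frac{H_k}{H_{k-1}}\le H_k^{1/k}\le\sup_\Sigma H_k^{1/k}$ and $c_{k-1}\rho(h)>0$, we obtain
\[
\frac1j>\hat L_{k-1}\sigma(h)(q_j)\ge c_{k-1}\,\rho(h(q_j))\Big(\mathcal H(h(q_j))-\sup_\Sigma H_k^{1/k}\Big).
\]
Letting $j\to\infty$, so that $h(q_j)\to h^*$ and therefore $\rho(h(q_j))\to\rho(h^*)>0$ and $\mathcal H(h(q_j))\to\mathcal H(h^*)$, and dividing by $c_{k-1}\rho(h^*)>0$, we arrive at $0\ge\mathcal H(h^*)-\sup_\Sigma H_k^{1/k}$, that is $\sup_\Sigma H_k^{1/k}\ge\mathcal H(h^*)\ge\inf_\Sigma\mathcal H(h)$, the last inequality holding because $\mathcal H(h(q_j))\ge\inf_\Sigma\mathcal H(h)$ for every $j$.

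The only genuine obstacle I foresee is the first paragraph: verifying that $P_{k-1}$ is positive definite (so that $\hat P_{k-1}$ satisfies the hypotheses of Theorem \ref{maxprinc} and $\hat L_{k-1}$ is elliptic) and that the whole chain \eqref{garding}, not merely one Newton inequality, holds pointwise on $\Sigma$. Both follow from the presence of an elliptic point together with $H_k>0$, by the standard propagation argument inside the Garding cone invoked in Section 2 and in the preceding theorems of this section; once this is in place, the remainder is the routine Omori--Yau computation with $\sigma(h)$.
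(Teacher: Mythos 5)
Your proof is correct and follows essentially the same route the paper intends: the paper itself only says ``Reasoning as in the previous results gives the next,'' meaning one repeats the $k=2$ argument with $\hat L_1$ replaced by $\hat L_{k-1}$, formula \eqref{lrsigma} used at index $k-1$, and the Newton inequality $H_2/H_1\le H_2^{1/2}$ replaced by the Garding inequality $H_k/H_{k-1}\le H_k^{1/k}$, with the elliptic point and $H_k>0$ guaranteeing positivity of $H_1,\dots,H_{k-1}$ and of $P_{k-1}$. Your write-up fills in exactly these details, so it matches the paper's proof.
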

\begin{corollary}
Let $\p^n$ be a complete, non-compact, Riemannian manifold whose radial sectional curvature satisfies condition
\eqref{sectP}. Assume that $f:\Sigma^n\ra M^{n+1}=I\times_{\rho} \p^n$ is a properly immersed hypersurface having an elliptic
point, with $H_k>0$ and $\sup_\Sigma |H_1|<+\infty$. If $f(\Sigma)$ is contained in a slab, then
\[
\sup_\Sigma H^{1/k}_k\geq \inf_\Sigma\mathcal{H}(h)
\]
for every $3\leq k\leq n$. In other words, there is no properly immersed hypersurface having an elliptic point, with $H_k>0$, $\sup_\Sigma |H_1|<+\infty$ and
contained in a slab $[t_1,t_2]\times\p^n$ with
\[
\sup_\Sigma H^{1/k}_k<\inf_{[t_1,t_2]}\mathcal{H}(t).
\]
\end{corollary}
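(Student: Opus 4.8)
The plan is to deduce this statement from the theorem immediately preceding it, in exactly the way Corollary~\ref{corothmC} was deduced from Theorem~\ref{thmC}: the substance is just to check, under the present hypotheses, that the Omori--Yau maximum principle holds on $\Sigma$ for the normalized operator $\hat L_{k-1}$ and that $h^*<+\infty$, and for the former I would appeal to part (2) of Corollary~\ref{OYproperly}.

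First I would record the trivial reductions. If $\sup_\Sigma H_k^{1/k}=+\infty$ the claimed inequality is vacuous, so we may assume it is finite. Since $f(\Sigma)\subset[t_1,t_2]\times\p^n$, the height function satisfies $t_1\leq h\leq t_2$; in particular $h^*=\sup_\Sigma h\leq t_2<+\infty$, which is one of the hypotheses of the theorem.

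Next I would verify the remaining hypotheses. Because $\Sigma$ is connected, carries an elliptic point, and satisfies $H_k>0$ everywhere, the argument recalled in Section~2 shows that $\Sigma$ stays inside the Garding cone $\Gamma_k$, so the Garding inequalities \eqref{garding} hold at every point; in particular $H_1,\dots,H_{k-1}$ are positive, the Newton operator $P_{k-1}$ is positive definite, and therefore $\hat P_{k-1}=H_{k-1}^{-1}P_{k-1}$ is a positive definite symmetric tensor with $\mathrm{Tr}\,\hat P_{k-1}=c_{k-1}<+\infty$. Thus $\hat L_{k-1}$ is a semi-elliptic (indeed elliptic) trace operator with trace bounded above. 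To apply Corollary~\ref{OYproperly}(2) it then remains only to bound $\|A\|$, and here I would use $\|A\|^2=n^2H_1^2-n(n-1)H_2$ together with $H_2>0$ to get $\|A\|^2\leq n^2H_1^2\leq n^2(\sup_\Sigma|H_1|)^2<+\infty$. Corollary~\ref{OYproperly}(2) then guarantees that $\Sigma$ is complete and that the Omori--Yau maximum principle holds on $\Sigma$ for $\hat L_{k-1}$.

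With all hypotheses of the preceding theorem in force, it yields $\sup_\Sigma H_k^{1/k}\geq\inf_\Sigma\mathcal H(h)$ for each $k$ with $3\leq k\leq n$. For the reformulation: since $t_1\leq h\leq t_2$ one has $\inf_\Sigma\mathcal H(h)\geq\inf_{[t_1,t_2]}\mathcal H(t)$, so no properly immersed hypersurface with the stated properties can satisfy $\sup_\Sigma H_k^{1/k}<\inf_{[t_1,t_2]}\mathcal H(t)$. The only genuinely substantive step, and the one I would expect to require the most care, is the passage from ``an elliptic point plus $H_k>0$'' to the positive definiteness of $P_{k-1}$ and the full Garding chain on all of $\Sigma$ (i.e.\ that $\Sigma$ remains in $\Gamma_k$, the non-constant counterpart of \cite[Proposition~3.2]{barbosacolares}); everything after that is a mechanical verification of the hypotheses of Corollary~\ref{OYproperly}(2) and of the theorem.
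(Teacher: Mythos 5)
Your proof is correct and follows exactly the route the paper intends: verify that the hypotheses of the preceding theorem hold by applying Corollary~\ref{OYproperly}(2), for which the key points are $h^*\leq t_2<+\infty$, the bound $\|A\|^2=n^2H_1^2-n(n-1)H_2\leq n^2(\sup_\Sigma|H_1|)^2$ (using $H_2>0$, which comes from the Garding chain triggered by the elliptic point and $H_k>0$), and $\mathrm{Tr}\,\hat P_{k-1}=c_{k-1}$; then the reformulation is immediate from $\inf_\Sigma\mathcal H(h)\geq\inf_{[t_1,t_2]}\mathcal H(t)$. You also correctly flag the one genuinely non-mechanical step, namely that an elliptic point plus $H_k>0$ on a connected $\Sigma$ forces the principal curvature vector into the Garding cone $\Gamma_k$ everywhere, giving positivity of $H_1,\dots,H_{k-1}$ and positive definiteness of $P_{k-1}$; the paper states this in the Preliminaries only in the phrasing ``in case $H_k$ is constant,'' but the Barbosa--Colares connectedness argument applies verbatim under the weaker hypothesis $H_k>0$ since $\Gamma_k$ is the connected component of $\{S_k>0\}$ containing the positive cone.
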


\section{Hypersurfaces with constant 2-mean curvature}

In this section we will derive some applications for hypersurfaces with positive constant 2-mean curvature $H_2$.
Before stating the main results, let us introduce an auxiliary lemma that will be useful in the sequel.
\begin{lemma}\label{lemmamainthm}
Let $f:\Sigma^n \ra I\times_{\rho} \p^n$ be a hypersurface with non-vanishing mean curvature which is contained in a slab.
Assume that $\mathcal{H}'\geq 0$ and that the angle function $\Theta$ does not change sign. Choose on $\Sigma$ the orientation so
that $H_1>0$. Suppose the Omori-Yau maximum principle for the Laplacian holds on $\Sigma$. We have that
\begin{align*}
\mathrm{(i)} & \text{ if } \Theta\leq 0,\qquad \text{then } \mathcal{H}(h) \geq 0,\\
\mathrm{(ii)} &\text{ if } \Theta\geq 0,\qquad \text{then }\mathcal{H}(h) \leq 0.
\end{align*}
\end{lemma}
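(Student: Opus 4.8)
The plan is to apply the Omori--Yau maximum principle for the Laplacian to the height function $h$, which is bounded because $\Sigma$ lies in a slab $[t_1,t_2]\times\p^n$, so that both $h^*=\sup_\Sigma h$ and $h_*=\inf_\Sigma h$ are finite. The basic identity is \eqref{lrh} with $k=0$ (so $c_0=n$, $H_0=1$, $P_0=I$), namely
$$
\Delta h=\mathcal{H}(h)\bigl(n-\norm{\nabla h}^2\bigr)+n\,\Theta H_1 ,
$$
together with $\norm{\nabla h}^2=1-\Theta^2$, so that $n-\norm{\nabla h}^2=n-1+\Theta^2$. Along any sequence produced by the maximum principle, where $\norm{\nabla h}\ra0$, this coefficient tends to $n$. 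The only term we cannot control quantitatively is $n\,\Theta H_1$, since $H_1$ is not assumed bounded; but $H_1>0$ and $\Theta$ has a fixed sign, so this term has a definite sign, and the idea is to apply the maximum principle at whichever end of the range of $h$ makes that sign harmless (i.e.\ makes $n\,\Theta H_1$ droppable from the relevant inequality).

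Consider first case (ii), $\Theta\ge0$. Applying the Omori--Yau maximum principle for $\Delta$ to $h$ at $h^*<+\infty$, we obtain $\{q_j\}\subset\Sigma$ with $h(q_j)\ra h^*$, $\norm{\nabla h(q_j)}\ra0$ and $\Delta h(q_j)<1/j$. Since $\Theta(q_j)\ge0$ and $H_1(q_j)>0$, the term $n\,\Theta(q_j)H_1(q_j)$ is non-negative, hence
$$
\mathcal{H}(h(q_j))\bigl(n-\norm{\nabla h(q_j)}^2\bigr)\le\Delta h(q_j)<\frac1j .
$$
Letting $j\ra+\infty$, using continuity of $\mathcal{H}$, $h(q_j)\ra h^*$ and $\norm{\nabla h(q_j)}\ra0$, we get $n\,\mathcal{H}(h^*)\le0$, i.e.\ $\mathcal{H}(h^*)\le0$. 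Since $\mathcal{H}'\ge0$ the function $\mathcal{H}$ is non-decreasing and $h\le h^*$ on $\Sigma$, so $\mathcal{H}(h)\le\mathcal{H}(h^*)\le0$ everywhere.

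Case (i), $\Theta\le0$, is symmetric: apply the Omori--Yau maximum principle for $\Delta$ to $h$ at the other end of its range (equivalently, to $-h$), using $h_*>-\infty$, to get $\{q_j\}$ with $h(q_j)\ra h_*$, $\norm{\nabla h(q_j)}\ra0$ and $\Delta h(q_j)>-1/j$. Now $n\,\Theta(q_j)H_1(q_j)\le0$, so
$$
\mathcal{H}(h(q_j))\bigl(n-\norm{\nabla h(q_j)}^2\bigr)=\Delta h(q_j)-n\,\Theta(q_j)H_1(q_j)>-\frac1j ,
$$
and passing to the limit gives $\mathcal{H}(h_*)\ge0$; since $\mathcal{H}$ is non-decreasing and $h\ge h_*$, we conclude $\mathcal{H}(h)\ge0$ on $\Sigma$. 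The only real point of the argument--hence the main ``obstacle''--is recognizing that $H_1$ need not be bounded, which dictates at which extremum of $h$ one must work so that $n\,\Theta H_1$ falls on the harmless side of the maximum-principle inequality; after that it is a direct substitution into \eqref{lrh}, a passage to the limit, and a use of the monotonicity of $\mathcal{H}$ to upgrade the conclusion at the extremum of $h$ to one valid on all of $\Sigma$.
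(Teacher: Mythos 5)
Your proof is correct and follows essentially the same route as the paper: apply the Omori--Yau maximum principle to the height function at the extremum of $h$ that makes the sign of $n\Theta H_1$ harmless, drop that term, pass to the limit to get the sign of $\mathcal{H}$ at that extremum, and then use the monotonicity of $\mathcal{H}$ to propagate the conclusion to all of $\Sigma$. The paper phrases the limit step slightly differently (it isolates $-nH_1\Theta$ and takes a $\liminf$, also noting $\Theta(p_j)\to\mp1$), but that is cosmetic; the content and the key observation about which extremum to use are identical.
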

\begin{proof}
Since $h$ is bounded from below and the Omori-Yau maximum principle for the Laplacian operator holds on $\Sigma$, we can find a sequence $\{p_j\} \subset \Sigma^n$ such that
\begin{eqnarray*}
\lim_{j\ra +\infty}h(p_j) & = & h_*:=\inf h ,\\
\norm{\nabla h(p_j)}^2& = & 1-\Theta^2(p_j) < \Big(\frac{1}{j}\Big)^2,\\
\Delta h(p_j) & = & \mathcal{H}(h(p_j))(n-\norm{\nabla h(p_j)}^2)+nH_1(p_j)\Theta(p_j)>-\frac{1}{j}.
\end{eqnarray*}
Then
\begin{equation}
\label{luis.3}
-nH_1(p_j)\Theta(p_j)<\frac{1}{j}+\mathcal{H}(h(p_j))(n-\norm{\nabla h(p_j)}^2).
\end{equation}
Similarly, since $h$ is bounded from above, we can also find a second sequence $\{q_j\} \subset \Sigma^n$ such that
\begin{eqnarray*}
\lim_{j\ra +\infty}h(q_j) & = & h^*:=\sup h ,\\
\norm{\nabla h(q_j)}^2& = & 1-\Theta^2(q_j) < \Big(\frac{1}{j}\Big)^2,\\
\Delta h(q_j) & = & \mathcal{H}(h(q_j))(n-\norm{\nabla h(q_j)}^2)+nH_1(q_j)\Theta(q_j)<\frac{1}{j}.
\end{eqnarray*}
Then
\begin{equation}
\label{luis.4}
-nH_1(q_j)\Theta(q_j)>-\frac{1}{j}+\mathcal{H}(h(q_j))(n-\norm{\nabla h(q_j)}^2).
\end{equation}

Assume first that $\Theta\leq 0$. Since $\lim_{j\ra +\infty} -\Theta (p_j)=-\sgn\Theta=1>0$, we have $-\Theta(p_j)>0$ for sufficiently
large $j$. Since $H_1(p_j)>0$, using \eqref{luis.3} it follows from \eqref{luis.3} that
\[
0\leq\liminf_{j\ra+\infty}\Big(-H_1(p_j)\Theta(p_j)\Big)\leq\mathcal{H}(h_*).
\]
Therefore $\mathcal{H}(h_*)\geq 0$ and, by $\mathcal{H}'\geq 0$, we conclude that
$$
\mathcal{H}(h)\geq\mathcal{H}(h_*)\geq 0.
$$
Assume now that $\Theta\geq 0$ then $\lim_{j\ra+\infty}\Theta(q_j)=\sgn\Theta=1>0$, so that
$\Theta(q_j)>0$ for sufficiently large $j$. Therefore, since $H_1(q_j)>0$, from \eqref{luis.4} we deduce
\[
0\leq\liminf_{j\ra+\infty}\Big(H_1(q_j)\Theta(q_j)\Big)\leq-\mathcal{H}(h^*).
\]
Therefore $\mathcal{H}(h^*)\leq 0$ and, by $\mathcal{H}'\geq 0$, we conclude that
$$
\mathcal{H}(h)\leq\mathcal{H}(h^*)\leq 0.
$$
This concludes the proof.
\end{proof}

In the rest of this section we will work basically with the operator $L_1$. We will assume that $H_2$ is a positive constant. Recall that this implies, in this case, that the immersion is two-sided. We can choose the normal unit vector $N$ on $\Sigma$ such that $H_1>0$ and the operator $L_1$ is
elliptic (see the discussion in the Preliminaries).

Let $\sigma(t)=\int_{t_0}^t \rho(s) \di s$. By Proposition \ref{propsigma} we know that
\begin{eqnarray}
\label{luis.12}
\nonumber \Delta \sigma(h) & = & n \rho(h)(\mathcal{H}(h)+\Theta H_1),\\
L_1 \sigma(h) & = & n(n-1)\rho(h)(\mathcal{H}(h)H_1+\Theta H_2).
\end{eqnarray}
Therefore,
\begin{equation}
\label{luis.10}
\mathcal{L}_1\sigma(h)=n(n-1)\rho(h)(\mathcal{H}(h)^2-\Theta^2H_2),
\end{equation}
where $\mathcal{L}_1$ is the operator given by
$$
\mathcal{L}_1=(n-1)\mathcal{H}(h) \Delta - \Theta L_1=\mathrm{Tr}(\mathcal{P}_1\circ \hess),
$$
with
\[
\mathcal{P}_1=(n-1)\mathcal{H}(h)I-\Theta P_1.
\]

Let us now state the first main result of this section, which extends Theorem 2.4 in \cite{aliasdajczer}
to the case of constant 2-mean curvature $H_2$.
\begin{theorem}
\label{mainthmcompact}
Let $f: \Sigma^n \ra I \times_{\rho} \p^n$ be a compact hypersurface of constant positive $2$-mean curvature $H_2$.
If $\mathcal{H}'(t)\geq 0$ and the angle function $\Theta$ does not change sign, then $\p^n$ is necessarily
compact and $f(\Sigma^n)$ is a slice.
\end{theorem}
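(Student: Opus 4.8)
Since $\Sigma$ is compact it is automatically contained in a slab and the Omori--Yau maximum principle for the Laplacian (indeed for any semi-elliptic operator) holds trivially, because extrema of continuous functions are attained; the plan is to reduce the statement to an application of the classical strong maximum principle to the operator $\mathcal{L}_1$ introduced above, applied to $v=\sigma(h)$. First I would fix the orientation: as $H_2>0$, the inequality $H_1^2\ge H_2$ shows that the mean curvature never vanishes, so $\Sigma$ is two-sided, and choosing the unit normal so that $H_1>0$ makes $P_1$ positive definite and $L_1$ elliptic. Since $\mathcal{H}'\ge 0$, $\Theta$ has constant sign, and the Omori--Yau principle for $\Delta$ is available, Lemma \ref{lemmamainthm} applies and yields the dichotomy: either $\Theta\le 0$ and $\mathcal{H}(h)\ge 0$ throughout $\Sigma$, or $\Theta\ge 0$ and $\mathcal{H}(h)\le 0$ throughout $\Sigma$. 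I will run the argument under the assumption $\Theta\le 0$; the case $\Theta\ge 0$ is entirely analogous after replacing $\mathcal{L}_1$ by $-\mathcal{L}_1$ (equivalently $v$ by $-v$) and yields $\mathcal{H}(h)\equiv-\sqrt{H_2}$ instead.

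Put $v=\sigma(h)$. Since $\sigma$ is strictly increasing and $\Sigma$ is compact, $v$ attains its maximum at a point $p^+$ with $h(p^+)=h^*$ and its minimum at a point $p^-$ with $h(p^-)=h_*$; at each of these points $\nabla v=\rho(h)\nabla h=0$ forces $\nabla h=0$, hence $\Theta^2=1-\norm{\nabla h}^2=1$. When $\Theta\le 0$ the tensor $\mathcal{P}_1=(n-1)\mathcal{H}(h)I-\Theta P_1$ is positive semi-definite, being the sum of the non-negative tensors $(n-1)\mathcal{H}(h)I$ and $-\Theta P_1$. Evaluating \eqref{luis.10} at $p^+$, where $\Hess v\le 0$ and therefore $\mathcal{L}_1 v\le 0$, gives $\mathcal{H}(h^*)^2\le H_2$; evaluating at $p^-$, where $\Hess v\ge 0$ and therefore $\mathcal{L}_1 v\ge 0$, gives $\mathcal{H}(h_*)^2\ge H_2$. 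On the other hand $\mathcal{H}'\ge 0$ gives $0\le\mathcal{H}(h_*)\le\mathcal{H}(h^*)$, so that $\mathcal{H}(h_*)^2\le\mathcal{H}(h^*)^2$, and the three quantities must agree: $\mathcal{H}(h_*)=\mathcal{H}(h^*)=\sqrt{H_2}$. By monotonicity of $\mathcal{H}$ it follows that $\mathcal{H}(h)\equiv\sqrt{H_2}$ along $\Sigma$.

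Now the argument closes. With $\mathcal{H}(h)\equiv\sqrt{H_2}$ we have $\mathcal{P}_1=(n-1)\sqrt{H_2}\,I-\Theta P_1$, which dominates the positive definite tensor $(n-1)\sqrt{H_2}\,I$ since $H_2>0$ and $-\Theta P_1\ge 0$; hence $\mathcal{L}_1$ is now genuinely elliptic, with no zeroth order term. At the same time \eqref{luis.10} becomes $\mathcal{L}_1 v=n(n-1)\rho(h)H_2\norm{\nabla h}^2\ge 0$, so $v$ is a subsolution of an elliptic operator attaining an interior maximum, and the strong maximum principle (Theorem 3.1 in \cite{GT}) forces $v$, and hence $h$, to be constant. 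Therefore $f(\Sigma)$ lies in a single slice $\p_c\cong\p^n$, and $f$ is an isometric immersion between connected Riemannian manifolds of the same dimension, thus a local diffeomorphism whose image is open and closed; it follows that $f(\Sigma)=\p_c$, so that $\p^n$ is compact and $f(\Sigma^n)$ is a slice. The one real obstacle is this ellipticity gap: a priori $\mathcal{P}_1$ is only positive semi-definite, which suffices to extract the two scalar inequalities at the extrema of $v$ but not to invoke the strong maximum principle; the resolution is the bootstrap that first pins $\mathcal{H}(h)$ to the constant $\sqrt{H_2}$, after which the hypothesis $H_2>0$ upgrades $\mathcal{P}_1$ to a positive definite tensor.
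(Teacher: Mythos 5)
Your proof is correct and follows the paper's strategy up to the last step: same orientation fix, same invocation of Lemma~\ref{lemmamainthm} to get $\Theta\le 0\Rightarrow\mathcal{H}(h)\ge 0$, same positive semi-definiteness of $\mathcal{P}_1$, and the same evaluation of $\mathcal{L}_1\sigma(h)$ at the extrema of $h$ via~\eqref{luis.10} to force $\mathcal{H}(h)\equiv\sqrt{H_2}$ by monotonicity. Where you diverge is the closing argument. The paper, after pinning $\mathcal{H}(h)\equiv\sqrt{H_2}$, abandons $\mathcal{L}_1$ and passes to the (always elliptic) operator $L_1$, using~\eqref{luis.12} together with the Garding inequality $H_1\ge\sqrt{H_2}$ and $\Theta\ge -1$ to conclude $L_1\sigma(h)\ge 0$, then applies the maximum principle for $L_1$. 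You instead stay with $\mathcal{L}_1$ and observe that the bootstrap makes $\mathcal{P}_1=(n-1)\sqrt{H_2}\,I-\Theta P_1\ge(n-1)\sqrt{H_2}\,I$ genuinely positive definite, while~\eqref{luis.10} reduces to $\mathcal{L}_1\sigma(h)=n(n-1)\rho(h)H_2\|\nabla h\|^2\ge 0$, so the strong maximum principle for $\mathcal{L}_1$ applies directly. Both are sound; your variant is arguably cleaner in that it needs no Garding inequality at the end and makes explicit why the semi-definiteness of $\mathcal{P}_1$ was never a real obstruction, whereas the paper's route avoids having to certify ellipticity of $\mathcal{L}_1$ by falling back to the operator that was elliptic from the start. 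Your final observation that $h\equiv\text{const}$ forces $f(\Sigma)$ to be a full slice (local isometry, open and closed image) and hence $\p^n$ compact is the same as the paper's implicit conclusion.
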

\begin{proof}
As indicated above, we choose the orientation of $\Sigma$ so that $H_1>0$. Since $\Sigma^n$ is compact, we may apply Lemma \ref{lemmamainthm}.
Let us consider first the case where $\Theta\leq 0$, for which $\mathcal{H}(h)\geq 0$. Thus, the
operator $\mathcal{P}_1$ is positive semi-definite or, equivalently,  $\mathcal{L}_1$ is semi-elliptic.

Since $\Sigma$ is compact, there exist points
$\pmax\in\Sigma$ and $\pmin\in\Sigma$ such that
$$h(\pmax)=h^*=\max_\Sigma h \quad \mathrm{and} \quad h(\pmin)=h_*=\min_\Sigma h.$$
Therefore, $\|\nabla h(\pmax)\|=\|\nabla h(\pmin)\|=0$, which yields $$\Theta(\pmax)=\Theta(\pmin)=-1$$ because of
\eqref{luis.11}. Observe that
$$(\sigma\circ h)^*=\max_\Sigma (\sigma\circ h)=\sigma(h^*)=\sigma(h(\pmax))$$ and
$$(\sigma\circ h)_*=\min_\Sigma (\sigma\circ h)=\sigma(h_*)=\sigma(h(\pmin)),$$ because $\sigma(t)$ is strictly
increasing. In particular,
\[
\Hess{\sigma(h)}(\pmax)\leq 0 \quad \mathrm{and} \quad \Hess{\sigma(h)}(\pmin)\geq 0.
\]
Taking into account that $\mathcal{P}_1$ is positive semi-definite, yields
\[
\mathcal{L}_1\sigma(h)(\pmax)=n(n-1)\rho(h^*)(\mathcal{H}(h^*)^2-H_2)\leq 0
\]
and
\[
\mathcal{L}_1\sigma(h)(\pmin)=n(n-1)\rho(h_*)(\mathcal{H}(h_*)^2-H_2)\geq 0.
\]
Then, since $\mathcal{H}(h)\geq 0$ on $\Sigma$, we obtain
$$\mathcal{H}(h_*)\geq H_2^{1/2}\geq \mathcal{H}(h^*).$$ On the other hand, by $\mathcal{H}'\geq 0$ we also have
$\mathcal{H}(h_*)\leq\mathcal{H}(h^*)$. Thus the validity of the equality $\mathcal{H}(h_*)=\mathcal{H}(h^*)$ and
$\mathcal{H}(h)=H_2^{1/2}$ is constant on $\Sigma$. By \eqref{luis.12}, using the basic inequality
$H_1\geq H_2^{1/2}$ and the fact that $\Theta\geq -1$, we obtain
\begin{eqnarray*}
L_1 \sigma(h) & = & n(n-1)\rho(h)H_2^{1/2}(H_1+\Theta H^{1/2}_2)\\
{} & \geq & n(n-1)\rho(h)H_2^{1/2}(H_1-H^{1/2}_2)\geq 0.
\end{eqnarray*}
That is, $L_1\sigma(h)\geq 0$ on the compact manifold $\Sigma$. Thus, by the maximum principle applied to the
elliptic operator $L_1$ we conclude that $\sigma(h)$, and hence $h$, is constant.

Finally, in the case where $\Theta\geq 0$ we know from Lemma \ref{lemmamainthm} that $\mathcal{H}(h)\leq 0$ on
$\Sigma$, so that the operator $-\mathcal{L}_1$ is semi-elliptic. The proof then follows as in the case $\Theta\leq 0$,
working with $-\mathcal{L}_1$ instead of $\mathcal{L}_1$.
\end{proof}

In our next result, we consider the case of complete (and non-compact) hypersurfaces, extending Theorem 2.9 in
\cite{aliasdajczer} to the case of constant 2-mean curvature hypersurfaces.
\begin{theorem}\label{mainthm}
Let $f: \Sigma^n \ra I \times_{\rho} \p^n$ be a complete hypersurface of constant positive $2$-mean curvature $H_2$ such that
\begin{equation}
\label{sectional}
K^{\mathrm{rad}}_{\Sigma}\geq-G(r).
\end{equation}
Here $G$ is a smooth function on $[0,+\infty)$ which is even at the origin and satisfies conditions (i)--(iv) listed in
Theorem \ref{maxprinc}. Assume that $\sup_{\Sigma}|H_1|<+\infty$ and that $\Sigma$ is contained in a slab, that is,
$$
f(\Sigma^n) \subset [t_1,t_2] \times \p^n,
$$
where $t_1,t_2 \in I$ are finite. If $\mathcal{H}'(t)>0$ almost everywhere and the angle function $\Theta$ does not change sign,
then $f(\Sigma^n)$ is a slice.
\end{theorem}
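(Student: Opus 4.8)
The plan is to repeat the argument of Theorem \ref{mainthmcompact}, replacing the maximum principle on a compact manifold by the generalized Omori--Yau maximum principle of Corollary \ref{coroOY}, applied to the bounded function $\sigma(h)$ and to the operator $\mathcal{L}_1$. If $\Sigma$ is compact the conclusion is already contained in Theorem \ref{mainthmcompact} (note that $\mathcal{H}'>0$ a.e. makes $\mathcal{H}$ strictly increasing, in particular non-decreasing), so I may assume $\Sigma$ non-compact. Since $H_2>0$ we have $H_1^2\geq H_2>0$, so the immersion is two-sided and I fix the orientation with $H_1>0$; then $P_1$ is positive definite and $L_1$ is elliptic, as recalled in the Preliminaries. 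Because $h$ takes values in the compact interval $[t_1,t_2]$, the functions $\rho(h)$ and $\mathcal{H}(h)$ are bounded and $\rho(h)\geq\min_{[t_1,t_2]}\rho>0$; combined with $\sup_\Sigma|H_1|<+\infty$ this shows that $\sigma(h)$ is bounded on $\Sigma$. The radial curvature hypothesis \eqref{sectional} together with Corollary \ref{coroOY} gives that the Omori--Yau maximum principle holds on $\Sigma$ for every semi-elliptic operator $L=\mathrm{Tr}(P\circ\hess)$ with $\sup_\Sigma\mathrm{Tr}P<+\infty$; in particular it holds for $\Delta$, so Lemma \ref{lemmamainthm} applies. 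Replacing $\Theta$ by $-\Theta$ if necessary, I assume $\Theta\leq 0$, whence $\mathcal{H}(h)\geq 0$ on $\Sigma$ by Lemma \ref{lemmamainthm}(i).

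With $\mathcal{H}(h)\geq 0$, $\Theta\leq 0$ and $P_1$ positive definite, the tensor $\mathcal{P}_1=(n-1)\mathcal{H}(h)I-\Theta P_1$ is positive semi-definite, and $\mathrm{Tr}\mathcal{P}_1=n(n-1)(\mathcal{H}(h)-\Theta H_1)$ is bounded above on $\Sigma$ by the boundedness of $\mathcal{H}(h)$, by $|\Theta|\leq 1$ and by $\sup_\Sigma|H_1|<+\infty$. Hence Corollary \ref{coroOY} gives the Omori--Yau maximum principle on $\Sigma$ for the semi-elliptic operator $\mathcal{L}_1=\mathrm{Tr}(\mathcal{P}_1\circ\hess)$. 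I apply it to $\sigma(h)$ in both its $\sup$ and its $\inf$ forms, obtaining sequences $\{q_j\}$ and $\{p_j\}$ along which $\sigma(h)$ tends to $\sup_\Sigma\sigma(h)=\sigma(h^*)$, respectively to $\inf_\Sigma\sigma(h)=\sigma(h_*)$, along which $\|\nabla\sigma(h)\|=\rho(h)\sqrt{1-\Theta^2}\to 0$, hence $\Theta^2\to 1$ since $\rho(h)\geq\min_{[t_1,t_2]}\rho>0$, and along which, by \eqref{luis.10},
\[
\mathcal{L}_1\sigma(h)=n(n-1)\rho(h)\big(\mathcal{H}(h)^2-\Theta^2 H_2\big)
\]
is eventually $<1/j$ on $\{q_j\}$ and $>-1/j$ on $\{p_j\}$. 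As $\sigma$ is strictly increasing, $h(q_j)\to h^*$ and $h(p_j)\to h_*$; letting $j\to\infty$ and using the continuity of $\mathcal{H}$ and $\rho>0$, I obtain $\mathcal{H}(h^*)^2\leq H_2$ and $\mathcal{H}(h_*)^2\geq H_2$, that is, $\mathcal{H}(h^*)\leq H_2^{1/2}\leq\mathcal{H}(h_*)$ (recall $\mathcal{H}(h)\geq 0$ here).

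Since $h_*\leq h^*$ and $\mathcal{H}$ is non-decreasing, also $\mathcal{H}(h_*)\leq\mathcal{H}(h^*)$, so $\mathcal{H}(h_*)=\mathcal{H}(h^*)=H_2^{1/2}$. Now the strengthened hypothesis $\mathcal{H}'>0$ a.e. — which makes $\mathcal{H}$ strictly increasing, hence injective — forces $h_*=h^*$: the height function $h$ is constant and $f(\Sigma^n)$ is a slice. This is exactly where the non-compact statement needs $\mathcal{H}'>0$ a.e. rather than merely $\mathcal{H}'\geq 0$; in the compact case the weaker hypothesis necessitated the additional step showing that $\sigma(h)$ is $L_1$-subharmonic. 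The case $\Theta\geq 0$ is entirely analogous: Lemma \ref{lemmamainthm}(ii) gives $\mathcal{H}(h)\leq 0$, the tensor $-\mathcal{P}_1=-(n-1)\mathcal{H}(h)I+\Theta P_1$ is positive semi-definite with trace bounded above, and one runs the same argument with $-\mathcal{L}_1$ in place of $\mathcal{L}_1$. The step requiring the most care, and the one that consumes all of the hypotheses, is the verification that $\mathcal{P}_1$ (respectively $-\mathcal{P}_1$) is positive semi-definite with $\sup_\Sigma\mathrm{Tr}\mathcal{P}_1<+\infty$, so that Corollary \ref{coroOY} may legitimately be invoked for $\mathcal{L}_1$.
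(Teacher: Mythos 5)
Your proof is correct and follows essentially the same route as the paper's: invoke Corollary \ref{coroOY} to get the Omori--Yau principle for the Laplacian and for $\mathcal{L}_1$ (using the boundedness of $\mathrm{Tr}\mathcal{P}_1$), apply Lemma \ref{lemmamainthm} to pin down the sign of $\mathcal{H}(h)$, and then squeeze $\mathcal{H}(h_*)$ and $\mathcal{H}(h^*)$ against $H_2^{1/2}$ using the identity \eqref{luis.10} along maximizing and minimizing sequences for $\sigma(h)$. One small expository slip: since the orientation is already fixed by the requirement $H_1>0$, you cannot ``replace $\Theta$ by $-\Theta$ if necessary'' -- but this is self-corrected, as you in fact treat the case $\Theta\geq 0$ separately with $-\mathcal{L}_1$, exactly as the paper does.
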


\begin{proof}
Choose the orientation of $\Sigma$ so that $H_1>0$. By Corollary \ref{coroOY} we know that the Omori-Yau maximum principle holds on
$\Sigma$ for the Laplacian operator, so that we may apply Lemma \ref{lemmamainthm}.

In the case where $\Theta\leq 0$, by Lemma
\ref{lemmamainthm} we have $\mathcal{H}(h)\geq 0$, and therefore the operator $\mathcal{P}_1$ is positive semi-definite. In other
words, the differential operator $\mathcal{L}_1$ is semi-elliptic.
Furthermore
\[
\mathrm{Tr}\mathcal{P}_1=n(n-1)\mathcal{H}(h)-n(n-1)H_1\Theta \leq n(n-1)(\mathcal{H}(h^*)+H_1^*),
\]
where $h^*=\sup_\Sigma h<+\infty$ and $H_1^*=\sup_{\Sigma}H_1<+\infty$. Hence by Corollary \ref{coroOY} we know that the Omori-Yau maximum principle holds
on $\Sigma$ for the operator $\mathcal{L}_1$.

Since $\sup_\Sigma\sigma(h)=\sigma(h^*)<+\infty$, there exists a sequence $\{p_j\}_{j\in\enne}\subset\Sigma$ such that
\begin{align*}
(i) \quad & \lim_{j\ra +\infty} \sigma(h(p_j))=\sup_\Sigma\sigma(h)=\sigma(h^*),\\
(ii) \quad & \norm{\nabla(\sigma \circ h)(p_j)}=\rho(h(p_j))\norm{\nabla h(p_j)}<\frac{1}{j},\\
(iii) \quad & \mathcal{L}_1(\sigma \circ h)(p_j)<\frac 1j.
\end{align*}
Observe that condition (i) implies that $\lim_{j\ra +\infty}h(p_j)=h^*$, because $\sigma(t)$ is strictly increasing. Thus by
condition (ii) we also have $\lim_{j\ra +\infty}\norm{\nabla h(p_j)}=0$.
Therefore
$$
\mathcal{L}_1 \sigma(h)(p_j)=n(n-1)\rho(h(p_j))(\mathcal{H}(h(p_j))^2-\Theta^2(p_j)H_2)<\frac{1}{j},
$$
and taking the limit for $j\ra +\infty$ and observing that $\Theta^2(p_j)=1-\norm{\nabla h(p_j)}^2 \ra 1$ as $j\ra +\infty$, we find
$$
\mathcal{H}(h^*)^2-H_2\leq 0.
$$

On the other hand, since $h$ is also bounded from below, $\inf_\Sigma\sigma(h)=\sigma(h_*)>-\infty$, where
$h_*=\inf_\Sigma h>-\infty$. Thus, we can find a sequence $\{q_j\}_{j\in\enne} \subset \Sigma$ such that
\begin{align*}
(i) \quad & \lim_{j\ra +\infty} \sigma(h(q_j))=\inf_\Sigma\sigma(h)=\sigma(h_*),\\
(ii) \quad & \norm{\nabla(\sigma \circ h)(q_j)}=\rho(h(q_j))\norm{\nabla h(q_j)}<\frac{1}{j},\\
(iii) \quad & \mathcal{L}_1(\sigma \circ h)(q_j)>-\frac 1j.
\end{align*}
Hence, proceeding as above and using that
$$
\mathcal{L}_1 \sigma(h)(q_j)=n(n-1)\rho(h(q_j))(\mathcal{H}(h(q_j))^2-\Theta^2(q_j)H_2)>-\frac{1}{j},
$$
we find
$$
\mathcal{H}(h_*)^2-H_2\geq 0.
$$
Thus $\mathcal{H}(h_*)^2\geq \mathcal{H}(h^*)^2$ and, taking into account that $\mathcal{H}(h_*),\mathcal{H}(h^*)\geq 0$, this gives
$\mathcal{H}(h_*)\geq \mathcal{H}(h^*)$. Therefore, since $\mathcal{H}(t)$ is an increasing function we conclude that $h^*=h_*$.

Finally, let us consider the case where $\Theta\geq 0$. By Lemma \ref{lemmamainthm} we find that $\mathcal{H}(h)\leq 0$ and then
the operator $-\mathcal{L}_1$ is semi-elliptic. Moreover
$$
\mathrm{Tr}(-\mathcal{P}_1)=-n(n-1)\mathcal{H}(h)+n(n-1)H_1\Theta \leq n(n-1)(-\mathcal{H}(h_*)+H_1^*).
$$
Hence the trace of $-\mathcal{P}_1$ is bounded from above and by Corollary \ref{coroOY} the Omori-Yau maximum principle holds for
the operator $-\mathcal{L}_1$. Proceeding as above we arrive at the two inequalities
\[
H_2-\mathcal{H}(h_*)^2\geq 0 \quad \text{ and } \quad H_2-\mathcal{H}(h^*)^2\leq 0.
\]
Thus $\mathcal{H}(h_*)^2 \leq \mathcal{H}(h^*)^2$. Since $\mathcal{H}(h_*), \mathcal{H}(h^*)\leq 0$, this implies
$\mathcal{H}(h_*)\geq \mathcal{H}(h^*)$. But $\mathcal{H}(t)$ being increasing, this gives $h_*=h^*$ concluding the proof.
\end{proof}

In particular, Theorem \ref{mainthm} remains true if we replace condition \eqref{sectional} by the stronger condition
of $\Sigma^n$ having radial sectional curvature bounded from below by a constant. This happens, for instance, when the sectional
curvature of $\p^n$ is itself bounded from below. This observation yields the next
\begin{corollary}
\label{coromainthm}
Let $\p^n$ be a complete Riemannian manifold with sectional curvature bounded from below and let
$f:\Sigma^n \ra I \times_{\rho} \p^n$ be a complete hypersurface of constant positive $2$-mean curvature $H_2$.
Assume that $\sup_{\Sigma}|H_1|<+\infty$ and that $\Sigma$ is contained in a slab, that is,
$$
f(\Sigma^n) \subset [t_1,t_2] \times \p^n,
$$
where $t_1,t_2 \in I$ are finite. If $\mathcal{H}'(t)>0$ almost everywhere and the angle function $\Theta$ does not change sign,
then $f(\Sigma^n)$ is a slice.
\end{corollary}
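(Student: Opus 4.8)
The plan is to obtain Corollary \ref{coromainthm} as a direct consequence of Theorem \ref{mainthm}: I will show that the hypothesis on $\p^n$ forces the radial sectional curvature of $\Sigma^n$ to be bounded from below by a negative constant, and that a constant lower bound is precisely the special case of condition \eqref{sectional} corresponding to a concrete admissible function $G$. Concretely the steps are: (1) bound $\norm{A}$ on $\Sigma$; (2) bound the ambient sectional curvature along $f$; (3) transfer this to $\Sigma$ via the Gauss equation; (4) choose $G$ and quote Theorem \ref{mainthm}.

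For (1): since $H_2$ is a positive constant and $\sup_{\Sigma}|H_1|<+\infty$, Remark \ref{remarkmarco} — equivalently the identity $\norm{A}^2=n^2H_1^2-n(n-1)H_2$ — gives $\Lambda:=\sup_{\Sigma}\norm{A}^2<+\infty$. For (2): as $f(\Sigma)\subset[t_1,t_2]\times\p^n$ with $[t_1,t_2]$ compact, the warping function satisfies $\rho(h)\ge\min_{[t_1,t_2]}\rho>0$ and $\rho(h),|\rho'(h)|,|\rho''(h)|$ are bounded along $f$; inserting these bounds, together with the assumed lower bound on the sectional curvature of $\p^n$, into the standard warped-product curvature identities reduces the estimate of the ambient sectional curvature $\overline{K}_{M}$ on an arbitrary $2$-plane to the three building blocks (the curvature of a plane containing $\partial_t$ equals $-\rho''/\rho$, the mixed curvature terms vanish, and the curvature of a plane tangent to a slice equals $\rho^{-2}$ times the corresponding sectional curvature of $\p^n$ minus $(\rho'/\rho)^2$), each of which is bounded from below by a constant on the slab. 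Hence there is a constant $c_0>0$ with $\overline{K}_{M}\ge-c_0$ on every $2$-plane based along $f(\Sigma)$. For (3): by the Gauss equation, for every orthonormal pair $X,Y\in T_p\Sigma$,
\[
K_\Sigma(X,Y)=\overline{K}_{M}(X,Y)+\pair{AX,X}\pair{AY,Y}-\pair{AX,Y}^2\ge-c_0-2\Lambda=:-c_1,
\]
with $c_1>0$; in particular $K^{\mathrm{rad}}_{\Sigma}\ge-c_1\ge-c_1(1+r^2)$.

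For (4): take $G(t)=c_1(1+t^2)$. This $G$ is smooth, even at the origin, and satisfies conditions (i)--(iv) of Theorem \ref{maxprinc}: $G(0)=c_1>0$; $G'(t)=2c_1t\ge0$; $\int^{+\infty}G(t)^{-1/2}\di t=c_1^{-1/2}\int^{+\infty}(1+t^2)^{-1/2}\di t=+\infty$; and $\limsup_{t\ra+\infty}tG(t^{1/2})/G(t)=\limsup_{t\ra+\infty}t(1+t)/(1+t^2)=1<+\infty$. Since now $K^{\mathrm{rad}}_{\Sigma}\ge-G(r)$, all the hypotheses of Theorem \ref{mainthm} are met and its conclusion gives that $f(\Sigma^n)$ is a slice. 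The one place deserving care is step (2): it is routine warped-product geometry, but one must check that the curvature of an \emph{arbitrary} $2$-plane — not only horizontal or slice-tangent ones — stays bounded below, which works out precisely because the mixed curvature terms vanish and because on a slab $\rho,1/\rho,\rho',\rho''$ are all bounded, so only a one-sided bound on the sectional curvature of $\p^n$ is needed.
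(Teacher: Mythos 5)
Your proposal is correct and follows essentially the same route as the paper: the paper also reduces the corollary to Theorem \ref{mainthm} by proving (in Lemma \ref{lemmacurvature}) that $K_\Sigma$ is bounded below — using the identity $\|A\|^2=n^2H_1^2-n(n-1)H_2$, the Gauss equation, and the warped-product curvature formula — and then appeals to the constant-lower-bound special case of condition \eqref{sectional}. Your write-up simply inlines the content of Lemma \ref{lemmacurvature} and spells out the admissible choice $G(t)=c_1(1+t^2)$, which the paper leaves implicit.
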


As already observed, for the proof of Corollary \ref{coromainthm}, it suffices to show that $K^{\mathrm{rad}}_\Sigma$ is bounded from below
by a constant, and the result then follows from Theorem \ref{mainthm}. Actually, we can prove the following stronger
result, which will be useful in the sequel.
\begin{lemma}
\label{lemmacurvature}
Let $\p^n$ be a Riemannian manifold with sectional curvature bounded from below and let
$f:\Sigma^n \ra I \times_{\rho} \p^n$ be an immersed hypersurface.
Assume that $\sup_{\Sigma}\|A\|^2<+\infty$ and that $\Sigma$ is contained in a slab. Then the sectional curvature of
$\Sigma$ is bounded from below by a constant.
\end{lemma}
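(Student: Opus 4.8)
The plan is to deduce the lower bound from the Gauss equation, once we know that the ambient sectional curvature of $M^{n+1}=I\times_\rho\p^n$ is bounded from below on all $2$-planes that are tangent to $\Sigma$ at points of $\Sigma$. Recall that if $\{X,Y\}$ is an orthonormal pair in $T_p\Sigma$ and $A$ is the shape operator of the immersion with respect to a local unit normal, then
\[
K_\Sigma(X,Y)=K_M(X,Y)+\pair{AX,X}\pair{AY,Y}-\pair{AX,Y}^2 .
\]
Each of $|\pair{AX,X}|$, $|\pair{AY,Y}|$, $|\pair{AX,Y}|$ is bounded by the operator norm of $A$, which in turn is at most $\norm{A}$ (the Hilbert--Schmidt norm); hence the extrinsic term is $\geq-2\norm{A}^2\geq-2\sup_\Sigma\norm{A}^2>-\infty$. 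So it suffices to bound $K_M$ from below along $\Sigma$.

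For that I would invoke the standard curvature identities for a warped product with one--dimensional base (O'Neill's formulas applied to $I\times_\rho\p^n$). Writing $T$ for the lift of $\partial_t$, one has, for vertical vectors $V,W$ with $V$ unit and $\pair{V,W}=0$: $\pair{R_M(V,T)T,V}=-\rho''/\rho$; $R_M(V,W)T=0$; $R_M(T,V)W$ is a multiple of $\pair{V,W}$, hence vanishes here; and $\pair{R_M(V,W)W,V}=\rho^{-2}\norm{W}^2\big(K_\p-(\rho')^2\big)$, where $K_\p$ denotes the sectional curvature of $\p^n$ of the plane onto which $V,W$ project. Since the horizontal distribution of $M$ is one--dimensional, every $2$-plane $\pi\subset T_pM$ either is vertical or meets the vertical distribution in exactly a line; in either case we may choose an orthonormal basis $\{V,\,aT+W\}$ of $\pi$ with $V,W$ vertical, $\pair{V,W}=0$, and $a^2+\norm{W}^2=1$. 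Expanding $K_M(\pi)=\pair{R_M(V,aT+W)(aT+W),V}$ and using the identities above, all cross terms vanish and
\[
K_M(\pi)=-a^2\,\frac{\rho''}{\rho}+\frac{\norm{W}^2}{\rho^2}\big(K_\p-(\rho')^2\big).
\]

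Finally, I would exploit the slab hypothesis: $f(\Sigma)\subset[t_1,t_2]\times\p^n$ means the height function $h$ takes values in the compact interval $[t_1,t_2]\subset I$, so along $\Sigma$ the quantities $\rho(h)$, $\rho'(h)$, $\rho''(h)$ are uniformly bounded and $\rho(h)\geq\min_{[t_1,t_2]}\rho>0$. Combining this with the assumed bound $K_\p\geq-c$ (for some constant $c\geq0$) and with $0\leq a^2,\norm{W}^2\leq1$, the displayed formula gives a uniform lower bound $K_M(\pi)\geq-C_0$ for every $2$-plane $\pi$ tangent to $\Sigma$, where $C_0$ depends only on $\rho|_{[t_1,t_2]}$ and $c$. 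Plugging this into the Gauss equation yields $K_\Sigma\geq-C_0-2\sup_\Sigma\norm{A}^2>-\infty$, which is the assertion. The only genuinely delicate point is the bookkeeping in the warped--product curvature identities—in particular tracking the $\rho^{-2}$ factor multiplying $K_\p$ and verifying that the mixed curvature terms drop out—so I would record those identities carefully for a one--dimensional base before carrying out the expansion.
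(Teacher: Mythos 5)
Your proposal is correct and follows essentially the same route as the paper: first the Gauss equation reduces the problem to bounding the ambient sectional curvature $K_M$ from below along $\Sigma$, absorbing the shape operator terms via $\sup_\Sigma\|A\|^2<+\infty$; then the warped-product curvature identities together with the slab hypothesis (so that $\rho$, $\rho'$, $\rho''$ are bounded on $[t_1,t_2]$ and $\rho$ is bounded away from zero) and the lower bound on $K_\p$ give the desired constant. The only cosmetic difference is that you parametrize a tangent $2$-plane by a canonical basis $\{V,\,aT+W\}$ with $V,W$ vertical and compute $K_M(\pi)=-a^2\rho''/\rho+\rho^{-2}\|W\|^2\bigl(K_\p-(\rho')^2\bigr)$, whereas the paper works directly with an orthonormal pair $\{X,Y\}\subset T\Sigma$, decomposes $X=X^*+\pair{X,T}T$, and writes the bound as $\rho^{-2}K_\p\|X^*\wedge Y^*\|^2-\mathcal{H}^2-\mathcal{H}'\bigl(\pair{X,\nabla h}^2+\pair{Y,\nabla h}^2\bigr)$; using $\mathcal{H}'=\rho''/\rho-\mathcal{H}^2$ one checks the two expressions agree, so the content is identical.
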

Given the validity of Lemma \ref{lemmacurvature}, taking into account the equality
\[
\|A\|^2=\text{Tr}(A^2)=n^2H_1^2-n(n-1)H_2
\]
it follows that, under the assumptions of Corollary \ref{coromainthm},
$$\sup_{\Sigma}\|A\|^2\leq n^2(\sup_{\Sigma}H_1)^2-n(n-1)H_2<+\infty.$$
Thus $K^{\mathrm{rad}}_\Sigma$ is bounded from
below by a constant.
\begin{proof}[Proof of Lemma \ref{lemmacurvature}]
Recall that the Gauss equation for a hypersurface $f:\Sigma^n \rightarrow M^{n+1}$ is given by
$$
\pair{\R(X,Y)Z,V}=\pair{\overline{\R}(X,Y)Z,V}-\pair{AY,V}\pair{AX,Z}+\pair{AX,V}\pair{AY,Z},
$$
for $X,Y,Z,V\in T\Sigma$, where $\R$ and $\overline{\R}$ are the curvature tensors of $\Sigma^n$ and $M^{n+1}$, respectively.
Then, if $\{X, Y\}$ is an orthonormal basis for an arbitrary 2-plane tangent to $\Sigma$, we have
\begin{align}
\label{luis.7}
\nonumber K_{\Sigma}(X,Y)=&\overline{K}(X,Y)+\pair{AX,X}\pair{AY,Y}-\pair{AX,Y}^2\\
\geq & \overline{K}(X,Y)-\norm{AX}\norm{AY}-\norm{AX}^2\\
\nonumber \geq & \overline{K}(X,Y)-2\|A\|^2,
\end{align}
where the last inequality follows from the fact that
$$
\norm{AX}^2\leq \text{Tr}(A^2)\norm{X}^2=\|A\|^2
$$
for every unit vector $X$ tangent to $\Sigma$. Since we are assuming that $\sup_{\Sigma}\|A\|^2<+\infty$, it suffices to have
$\overline{K}(X,Y)$ bounded from below.

The curvature tensor of $M^{n+1}$ expressed in terms of
the curvature tensor of $\p^n$ is
\begin{align*}
\overline{\R}(U,V)W=&\R_{\p}(U^*,V^*)W^*-\mathcal{H}^2(\pi_{I})(\pair{V,W}U-\pair{U,W}V)\\
&+\mathcal{H}'(\pi_{I})\pair{W,T}(\pair{U,T}V-\pair{V,T}U)\\&-\mathcal{H}'(\pi_{I})(\pair{V,W}\pair{U,T}-\pair{U,W}\pair{V,T})T,
\end{align*}
for every $U,V,W\in TM$, where $T=\partial_t$ and we are using the notation $U^*$ to denote ${\pi_{\p}}_*U$ for an arbitrary
$U\in TM$. Then, for the orthonormal basis $\{X,Y\}$ we find that
\begin{eqnarray}
\label{luis.8}
\nonumber \overline{K}(X,Y) & = & \frac{1}{\rho^2(h)} K_{\p}(X^*,Y^*)
\norm{X^*\wedge Y^*}^2\\
{} & {} &-\mathcal{H}^2(h)-\mathcal{H}'(h)(\pair{X,\nabla h}^2+\pair{Y,\nabla h}^2)\\
\nonumber {} & \geq & \frac{1}{\rho^2(h)} K_{\p}(X^*,Y^*)\norm{X^*\wedge Y^*}^2
-\mathcal{H}^2(h)-|\mathcal{H}'(h)|,
\end{eqnarray}
since
\[
\pair{X,\nabla h}^2+\pair{Y,\nabla h}^2\leq\norm{\nabla h}^2\leq 1.
\]
On the other hand,
\begin{eqnarray*}
\norm{X^*\wedge Y^*}^2 & = &
\norm{X^*}^2\norm{Y^*}^2-\pair{X^*,Y^*}^2\\
{} & = & 1-\pair{X,T}^2-\pair{Y,T}^2\leq 1.
\end{eqnarray*}
Therefore, if $K_{\p}\geq c$ for some constant $c$, we deduce
\begin{equation}
\label{luis.9}
\frac{1}{\rho^2(h)} K_{\p}(X^*,Y^*)\norm{X^*\wedge Y^*}^2
\geq -\frac{|c|}{\rho^2(h)}.
\end{equation}
Finally, since $h$ a bounded function, we conclude from \eqref{luis.7}, \eqref{luis.8} and \eqref{luis.9} that
the sectional curvature $K(X,Y)$ is bounded from below by an absolute constant.
\end{proof}

We observe that condition \eqref{sectional} has been used in the proof of Theorem
\ref{mainthm} only to guarantee that
the Omori-Yau maximum principle holds on $\Sigma$ for the Laplacian and for the semi-elliptic operator $\mathcal{L}_1$
(or $-\mathcal{L}_1$). Therefore, the theorem remains true under any other hypothesis guaranteeing this latter fact.
Thus, and as a consequence of Corollary \ref{OYproperly}, we can also state the following:
\begin{theorem}
\label{mainthmproper}
Let $\p^n$ be a complete, non-compact, Riemannian manifold whose radial sectional curvature satisfies condition \eqref{sectP}.
Let $f: \Sigma^n \ra I \times_{\rho} \p^n$ be a properly immersed hypersurface of constant positive $2$-mean curvature
$H_2$. Assume that $\sup_{\Sigma}|H_1|<+\infty$ and that $\Sigma$ is contained in a slab.
If $\mathcal{H}'(t)>0$ almost everywhere and the angle function $\Theta$ does not change sign,
then $f(\Sigma^n)$ is a slice.
\end{theorem}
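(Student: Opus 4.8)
The plan is to reproduce, almost verbatim, the proof of Theorem \ref{mainthm}, changing only the one ingredient in which the intrinsic curvature bound \eqref{sectional} was used. As observed just before the statement, condition \eqref{sectional} entered that proof solely to guarantee, via Corollary \ref{coroOY}, the validity of the Omori-Yau maximum principle on $\Sigma$ for the Laplacian and for the semi-elliptic operator $\mathcal{L}_1$ (or $-\mathcal{L}_1$). So the task reduces to checking that, in the present properly-immersed-in-a-slab setting, the hypotheses of Corollary \ref{OYproperly} hold, which then supplies the very same maximum principles; after that the argument is a transcription.

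First I would recall that, since $H_2$ is a positive constant, the immersion is two-sided, and we orient $\Sigma$ so that $H_1>0$ and $L_1$ is elliptic (see the Preliminaries). From the identity $\|A\|^2=n^2H_1^2-n(n-1)H_2$ and the hypothesis $\sup_{\Sigma}|H_1|<+\infty$ it follows at once that $\sup_{\Sigma}\|A\|^2<+\infty$. Hence Corollary \ref{OYproperly} applies: $\Sigma$ is complete, the Omori-Yau maximum principle holds on $\Sigma$ for the Laplacian (part (1), or part (2) with $P=I$), and it holds for every semi-elliptic operator $L=\mathrm{Tr}(P\circ\hess)$ with $\sup_{\Sigma}\mathrm{Tr}P<+\infty$ (part (2)).

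With this in hand I would run the two-case analysis of Theorem \ref{mainthm}. Since $\mathcal{H}'>0$ almost everywhere makes $\mathcal{H}$ non-decreasing and $\Theta$ does not change sign, Lemma \ref{lemmamainthm} gives $\mathcal{H}(h)\ge 0$ when $\Theta\le 0$ and $\mathcal{H}(h)\le 0$ when $\Theta\ge 0$; in each case $\mathcal{P}_1=(n-1)\mathcal{H}(h)I-\Theta P_1$, respectively $-\mathcal{P}_1$, is positive semi-definite, so $\mathcal{L}_1$, respectively $-\mathcal{L}_1$, is semi-elliptic. Because $f(\Sigma)\subset[t_1,t_2]\times\p^n$, the function $\mathcal{H}(h)$ is bounded and $\rho(h)$ is bounded; combined with $\sup_\Sigma H_1<+\infty$ this yields $\sup_\Sigma\mathrm{Tr}\mathcal{P}_1<+\infty$, respectively $\sup_\Sigma\mathrm{Tr}(-\mathcal{P}_1)<+\infty$. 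Therefore the Omori-Yau maximum principle holds on $\Sigma$ for $\mathcal{L}_1$, respectively $-\mathcal{L}_1$. Applying it to $\sigma(h)=\int_{t_0}^{h}\rho$, whose supremum and infimum along $\Sigma$ are attained in the limit at points where $\Theta^2\to 1$, and invoking the identity \eqref{luis.10}, namely $\mathcal{L}_1\sigma(h)=n(n-1)\rho(h)(\mathcal{H}(h)^2-\Theta^2H_2)$, one obtains exactly as in Theorem \ref{mainthm} the inequalities $\mathcal{H}(h^*)^2\le H_2\le\mathcal{H}(h_*)^2$; since $\mathcal{H}(h^*)$ and $\mathcal{H}(h_*)$ have the common sign dictated by Lemma \ref{lemmamainthm}, this forces $\mathcal{H}(h_*)\ge\mathcal{H}(h^*)$, and the strict monotonicity of $\mathcal{H}$ gives $h_*=h^*$, i.e. $f(\Sigma^n)$ is a slice.

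The only genuinely new point relative to Theorem \ref{mainthm} is the applicability of Corollary \ref{OYproperly}, which rests on the elementary observation that constancy of $H_2$ together with boundedness of $H_1$ bounds $\|A\|$; so I expect no real obstacle here, only the routine bookkeeping of confirming that $\mathrm{Tr}\mathcal{P}_1$, $\mathcal{H}(h)$ and $\rho(h)$ stay bounded — which is immediate from the slab hypothesis — and that the two boundary inequalities survive the passage to the limit along the Omori-Yau sequences.
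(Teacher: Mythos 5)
Your argument is correct and coincides with the paper's own proof: the authors note that condition \eqref{sectional} served in Theorem \ref{mainthm} only to supply, via Corollary \ref{coroOY}, the Omori--Yau maximum principle for $\Delta$ and for $\mathcal{L}_1$ (or $-\mathcal{L}_1$), and then obtain those same maximum principles in the properly immersed setting from Corollary \ref{OYproperly}, using exactly the estimate $\sup_\Sigma\|A\|^2\le n^2(\sup_\Sigma H_1)^2-n(n-1)H_2<+\infty$. A small bookkeeping correction: in the case $\Theta\ge 0$ (where one works with $-\mathcal{L}_1$) the boundary inequalities come out reversed, $\mathcal{H}(h_*)^2\le H_2\le\mathcal{H}(h^*)^2$, but since there $\mathcal{H}(h_*),\mathcal{H}(h^*)\le 0$ this still yields $\mathcal{H}(h_*)\ge\mathcal{H}(h^*)$ and hence $h_*=h^*$ by strict monotonicity of $\mathcal{H}$.
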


As pointed out before, for the validity of Theorem \ref{mainthmproper} it suffices to show that the Omori-Yau maximum principle
holds on $\Sigma$ for the Laplacian and for the semi-elliptic operator $\mathcal{L}_1$ (or $-\mathcal{L}_1$). But this
follows directly from Corollary \ref{OYproperly}, since $\|A\|^2=\text{Tr}(A^2)=n^2H_1^2-n(n-1)H_2$ and
$$\sup_{\Sigma}\|A\|^2\leq n^2(\sup_{\Sigma}H_1)^2-n(n-1)H_2<+\infty.$$
See the remark following Corollary \ref{OYproperly}.

\section{Hypersurfaces with constant higher order mean curvature}
In this section we will extended our previous results to the case of hypersurfaces with non-zero constant $k$-mean
curvature $H_{k}$, when $k\geq 3$. To this end, we will work with the operator $L_{k-1}$, and we will assume that there
exists an elliptic point in $\Sigma$. Note that the existence of an elliptic point is always guaranteed when $\Sigma$ is compact and $\rho'\neq0$ on $\Sigma$ (see the proof of Theorem \ref{main2compact} below and Lemma 5.3 in \cite{aliascolares} in a Lorentzian ambient space). Recall from the discussion in the Preliminaries that the existence of an elliptic point implies that $H_k$ is positive, the immersion is two-sided and $H_1>0$ for the chosen
orientation. Moreover, it implies also that, for every $1\leq j\leq k-1$, the operators $L_{j}$ are elliptic or, equivalently, the
operators $P_j$ are positive definite.

In order to extend our previous results to the case of higher order mean curvatures,
we introduce a family of operators, extending $\mathcal{L}_1$.
For $2\leq k\leq n$, we define the operator
\[
\mathcal{L}_{k-1}=\mathrm{Tr}
\Big(\Big[\sum_{j=0}^{k-1}(-1)^j\frac{c_{k-1}}{c_j}\mathcal{H}(h)^{k-1-j}\Theta^{j}P_j\Big]\circ \hess \Big)=
\mathrm{Tr}(\mathcal{P}_{k-1}\circ\hess),
\]
where
\begin{equation}
\label{luis.21}
\mathcal{P}_{k-1}=\sum_{j=0}^{k-1}(-1)^j\frac{c_{k-1}}{c_j}\mathcal{H}(h)^{k-1-j}\Theta^{j}P_j.
\end{equation}
We claim that
$$
\mathcal{L}_{k-1}\sigma(h)=c_{k-1}\rho(h)(\mathcal{H}(h)^{k}+(-1)^{k-1}\Theta^{k}H_{k}).
$$
and we prove the claim by induction. We have already seen in \eqref{luis.10} that the claim is true for $k=2$. For $k\geq 3$, we
observe that
\[
\mathcal{P}_{k-1}=\frac{c_{k-1}}{c_{k-2}}\mathcal{H}(h)\mathcal{P}_{k-2}+(-1)^{k-1}\Theta^{k-1}P_{k-1}
\]
and then
\[
\mathcal{L}_{k-1}=\frac{c_{k-1}}{c_{k-2}}\mathcal{H}(h)\mathcal{L}_{k-2}+(-1)^{k-1}\Theta^{k-1}L_{k-1}
\]
Therefore, if $k\geq 3$ and we assume that the claim is true for $\mathcal{L}_{k-2}$, then using \eqref{lrsigma} we
conclude that
\begin{eqnarray*}
\mathcal{L}_{k-1}\sigma(h) & = & \frac{c_{k-1}}{c_{k-2}}\mathcal{H}(h)\mathcal{L}_{k-2}\sigma(h)+(-1)^{k-1}\Theta^{k-1}L_{k-1}\sigma(h) \\
{} & = & c_{k-1}\rho(h)(\mathcal{H}(h)^{k}+(-1)^{k-2}\mathcal{H}(h)\Theta^{k-1}H_{k-1}\\
{} & {} & +(-1)^{k-1}\mathcal{H}(h)\Theta^{k-1}H_{k-1}+(-1)^{k-1}\Theta^{k}H_{k}) \\
{} & = & c_{k-1}\rho(h)(\mathcal{H}(h)^{k}+(-1)^{k-1}\Theta^{k}H_{k}).
\end{eqnarray*}

We are now ready to give the following extension of Theorem \ref{mainthmcompact}.
\begin{theorem}
\label{thmHrcompact}
Let $f:\Sigma^n \ra I \times_{\rho} \p^n$ be a compact hypersurface with constant $k$-mean curvature $H_k$,
with $3\leq k\leq n$. Assume that there exists an elliptic point in $\Sigma$. If
$\mathcal{H}'(t)\geq 0$ and the angle function $\Theta$ does not change sign, then $\p^n$ is necessarily compact and
 $f(\Sigma^n)$ is a slice.
\end{theorem}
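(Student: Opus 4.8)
The plan is to mimic the proof of Theorem~\ref{mainthmcompact}, with the operators $\mathcal{L}_{k-1}$ and $L_{k-1}$ playing the roles of $\mathcal{L}_1$ and $L_1$, respectively.

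First I would record the consequences of the elliptic-point hypothesis: since $H_k$ is constant, the immersion is two-sided, and after fixing the orientation so that $H_1>0$ one has $H_j>0$ for $1\le j\le k$, the Garding chain \eqref{garding}, and positive definiteness of $P_1,\dots,P_{k-1}$ (hence ellipticity of $L_{k-1}$). Since $\Sigma$ is compact the Omori--Yau principle for $\Delta$ holds trivially and $f(\Sigma)$ automatically lies in a slab, so Lemma~\ref{lemmamainthm} applies and gives $\mathcal{H}(h)\ge0$ when $\Theta\le0$ and $\mathcal{H}(h)\le0$ when $\Theta\ge0$. I would treat $\Theta\le0$ in detail and note that $\Theta\ge0$ is symmetric after replacing $\mathcal{L}_{k-1}$ by $(-1)^{k-1}\mathcal{L}_{k-1}$ and interchanging the roles of maximum and minimum.

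Assume $\Theta\le0$, so $\mathcal{H}(h)\ge0$. Then each summand of $\mathcal{P}_{k-1}$ in \eqref{luis.21} equals $\frac{c_{k-1}}{c_j}\mathcal{H}(h)^{k-1-j}(-\Theta)^j P_j$, which is positive semi-definite; hence $\mathcal{P}_{k-1}$ is positive semi-definite and $\mathcal{L}_{k-1}$ is semi-elliptic. At points $\pmax,\pmin$ where $h$ attains its maximum and minimum, $\nabla h=0$ forces $\Theta=-1$ by \eqref{luis.11}, and since $\sigma$ is increasing one gets $\Hess\sigma(h)(\pmax)\le0$ and $\Hess\sigma(h)(\pmin)\ge0$. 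Tracing these against $\mathcal{P}_{k-1}$ and using the identity $\mathcal{L}_{k-1}\sigma(h)=c_{k-1}\rho(h)\big(\mathcal{H}(h)^k+(-1)^{k-1}\Theta^k H_k\big)$ established above (with $\Theta=-1$ the second term is $-H_k$, since $(-1)^{k-1}(-1)^k=-1$) yields $\mathcal{H}(h^*)^k\le H_k\le\mathcal{H}(h_*)^k$. Taking $k$-th roots (both $\mathcal{H}(h_*),\mathcal{H}(h^*)\ge0$) gives $\mathcal{H}(h^*)\le H_k^{1/k}\le\mathcal{H}(h_*)$, while $\mathcal{H}'\ge0$ forces $\mathcal{H}(h_*)\le\mathcal{H}(h^*)$; hence $\mathcal{H}$ is constant on $[h_*,h^*]$, so $\mathcal{H}(h)\equiv H_k^{1/k}$ on $\Sigma$.

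To finish I would substitute $\mathcal{H}(h)\equiv H_k^{1/k}$ into \eqref{lrsigma} for $L_{k-1}$ and bound below using $\Theta\ge-1$ together with the Garding inequality $H_{k-1}\ge H_k^{(k-1)/k}$:
\[
L_{k-1}\sigma(h)=c_{k-1}\rho(h)\big(H_k^{1/k}H_{k-1}+\Theta H_k\big)\ge c_{k-1}\rho(h)H_k^{1/k}\big(H_{k-1}-H_k^{(k-1)/k}\big)\ge0
\]
on the closed manifold $\Sigma$. Since $L_{k-1}$ is elliptic, the maximum principle forces $\sigma(h)$, hence $h$, to be constant, so $f(\Sigma^n)$ is a slice $\{c\}\times\p^n$; being a compact immersed hypersurface inside that slice, $\Sigma$ covers it, so $\p^n$ is compact. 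I expect the only delicate point to be the sign bookkeeping — verifying positive semi-definiteness of $\mathcal{P}_{k-1}$ in each case and carrying the $(-1)^{k-1}$ factors consistently through the $\Theta\ge0$ case; the rest is a faithful transcription of the $k=2$ argument.
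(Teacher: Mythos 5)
Your proposal is correct and follows the paper's own proof essentially verbatim: the same use of Lemma \ref{lemmamainthm} to pin down the sign of $\mathcal{H}(h)$, the same evaluation of $\mathcal{L}_{k-1}\sigma(h)$ at $\pmax$ and $\pmin$ to force $\mathcal{H}(h)\equiv H_k^{1/k}$, and the same final application of the maximum principle to $L_{k-1}\sigma(h)\ge 0$. The sign bookkeeping you flag as delicate is handled correctly (in particular $(-1)^{k-1}\Theta^k=-1$ at the extrema when $\Theta=-1$), and the closing remark on compactness of $\p^n$ is a valid small addition the paper leaves implicit.
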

\begin{proof}
Choose the orientation of $\Sigma$ so that $H_1>0$. Since $\Sigma^n$ is compact, we may apply Lemma \ref{lemmamainthm}.
Let us consider first the case when $\Theta\leq 0$, so that $\mathcal{H}(h)\geq 0$. Thus, by
\eqref{luis.21} the operator $\mathcal{P}_{k-1}$ is positive semi-definite or, equivalently,
$\mathcal{L}_{k-1}$ is semi-elliptic. Reasoning as in the proof of Theorem \ref{mainthmcompact}, yields
\[
\mathcal{L}_{k-1}\sigma(h)(\pmax)=c_{k-1}\rho(h^*)(\mathcal{H}(h^*)^k-H_k)\leq 0
\]
and
\[
\mathcal{L}_{k-1}\sigma(h)(\pmin)=c_{k-1}\rho(h_*)(\mathcal{H}(h_*)^k-H_k)\geq 0,
\]
with $\pmax\in\Sigma$ and $\pmin\in\Sigma$ such that $h(\pmax)=h^*=\max_\Sigma h$ and $h(\pmin)=h_*=\min_\Sigma h$.

Then, since $\mathcal{H}(h)\geq 0$ on $\Sigma$, we obtain
$$\mathcal{H}(h_*)\geq H_k^{1/k}\geq \mathcal{H}(h^*).$$ On the other hand, by $\mathcal{H}'\geq 0$ we also have
$\mathcal{H}(h_*)\leq\mathcal{H}(h^*)$. Thus, we have the equality $\mathcal{H}(h_*)=\mathcal{H}(h^*)$ and
$\mathcal{H}(h)=H_k^{1/k}$ is constant on $\Sigma$. Therefore, by \eqref{lrsigma} and using the Garding inequality
$H_{k-1}\geq H_k^{(k-1)/k}$ (see \eqref{garding}) and the fact that $\Theta\geq -1$, we obtain
\begin{eqnarray*}
L_{k-1} \sigma(h) & = & c_{k-1}\rho(h)H_k^{1/k}(H_{k-1}+\Theta H^{(k-1)/k}_k)\\
{} & \geq & c_{k-1}\rho(h)H_k^{1/k}(H_{k-1}-H^{(k-1)/k}_k)\geq 0.
\end{eqnarray*}
That is, $L_{k-1}\sigma(h)\geq 0$ on the compact manifold $\Sigma$. Therefore, by the maximum principle applied to the
elliptic operator $L_{k-1}$ we conclude that $\sigma(h)$, and hence $h$, is constant.

Finally, in the case where $\Theta\geq 0$ we know from Lemma \ref{lemmamainthm} that $\mathcal{H}(h)\leq 0$ on
$\Sigma$, so that the operator $(-1)^{k-1}\mathcal{L}_{k-1}$ is semi-elliptic. The proof then follows as in the case
$\Theta\leq 0$, working with $(-1)^{k-1}\mathcal{L}_{k-1}$ instead.
\end{proof}

For the case of complete (and non-compact) hypersurfaces, we can state the following extension of
Theorem \ref{mainthm}.
\begin{theorem}
\label{thmHr}
Let $f: \Sigma^n \ra I \times_{\rho} \p^n$ be a complete hypersurface with constant $k$-mean curvature $H_k$,
$3\leq k\leq n$, which satisfies condition \eqref{sectional}. Assume that there exists an elliptic point in $\Sigma$,
$\sup_{\Sigma}|H_1|<+\infty$ and $\Sigma$ is contained in a
slab. If $\mathcal{H}'(t)>0$ almost everywhere and the angle function $\Theta$ does not change sign,
then $f(\Sigma^n)$ is a slice.
\end{theorem}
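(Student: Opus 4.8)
The plan is to follow the proof of Theorem~\ref{mainthm}, replacing the operator $\mathcal{L}_1$ by $\mathcal{L}_{k-1}$ and using the identity $\mathcal{L}_{k-1}\sigma(h)=c_{k-1}\rho(h)(\mathcal{H}(h)^{k}+(-1)^{k-1}\Theta^{k}H_{k})$ established above. First I would dispose of two reductions: since $\rho$ is smooth and positive, $\mathcal{H}=\rho'/\rho$ is smooth, so $\mathcal{H}'>0$ almost everywhere and continuity of $\mathcal{H}'$ give $\mathcal{H}'\ge 0$ everywhere (hence $\mathcal{H}$ non-decreasing); and if $\Sigma$ is compact the statement is already Theorem~\ref{thmHrcompact}, so one may assume $\Sigma$ non-compact. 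Because $\Sigma$ has an elliptic point and $H_k$ is constant, the discussion in the Preliminaries (see in particular \cite[Proposition~3.2]{barbosacolares}) shows that the immersion is two-sided with $H_k>0$, and that for the orientation making $H_1>0$ the Newton tensors $P_1,\dots,P_{k-1}$ are positive definite and the inequalities \eqref{garding} hold at every point, so that $0<H_j\le H_1^{\,j}\le(\sup_\Sigma H_1)^{j}<+\infty$ for $1\le j\le k$. Fixing this orientation, \eqref{sectional} and Corollary~\ref{coroOY} give the Omori--Yau maximum principle on $\Sigma$ for the Laplacian, so Lemma~\ref{lemmamainthm} applies and the argument splits into the cases $\Theta\le 0$ and $\Theta\ge 0$.

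In the case $\Theta\le 0$, Lemma~\ref{lemmamainthm} gives $\mathcal{H}(h)\ge 0$, and the coefficient of $P_j$ in \eqref{luis.21} becomes $(-1)^{j}\frac{c_{k-1}}{c_j}\mathcal{H}(h)^{k-1-j}\Theta^{j}=\frac{c_{k-1}}{c_j}\mathcal{H}(h)^{k-1-j}|\Theta|^{j}\ge 0$, so $\mathcal{P}_{k-1}$ is a non-negative combination of the positive definite tensors $P_0,\dots,P_{k-1}$, hence positive semi-definite, and $\mathcal{L}_{k-1}$ is semi-elliptic. From $\mathrm{Tr}P_j=c_jH_j$ one has
\[
\mathrm{Tr}\,\mathcal{P}_{k-1}=c_{k-1}\sum_{j=0}^{k-1}(-1)^{j}\mathcal{H}(h)^{k-1-j}\Theta^{j}H_j,
\]
which is bounded on $\Sigma$ because $h$ ranges in the compact interval $[t_1,t_2]$, $|\Theta|\le 1$, and $0<H_j\le(\sup_\Sigma H_1)^{j}<+\infty$. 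Hence Corollary~\ref{coroOY} gives the Omori--Yau principle for $\mathcal{L}_{k-1}$ as well. Applying it to the bounded function $\sigma\circ h$ along a sequence $\{p_j\}$ realizing $\sup_\Sigma\sigma(h)$, one gets $h(p_j)\to h^{*}$ (since $\sigma$ is strictly increasing) and, from $\Theta(p_j)^{2}=1-\norm{\nabla h(p_j)}^{2}\to 1$ with $\Theta\le 0$, $\Theta(p_j)\to -1$; passing to the limit in the identity for $\mathcal{L}_{k-1}\sigma(h)$ yields $\mathcal{H}(h^{*})^{k}\le H_k$, and the symmetric argument at a sequence realizing $\inf_\Sigma\sigma(h)$ yields $\mathcal{H}(h_{*})^{k}\ge H_k$. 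Thus $\mathcal{H}(h_{*})^{k}\ge H_k\ge\mathcal{H}(h^{*})^{k}$ with both values non-negative, so $\mathcal{H}(h_{*})\ge\mathcal{H}(h^{*})$; since $\mathcal{H}$ is non-decreasing this forces $\mathcal{H}(h_{*})=\mathcal{H}(h^{*})$, and $\mathcal{H}'>0$ almost everywhere then gives $h_{*}=h^{*}$, i.e.\ $f(\Sigma^n)$ is a slice.

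The case $\Theta\ge 0$ is treated the same way using $(-1)^{k-1}\mathcal{L}_{k-1}$ instead of $\mathcal{L}_{k-1}$: now $\mathcal{H}(h)\le 0$, and the coefficient of $P_j$ in $(-1)^{k-1}\mathcal{P}_{k-1}$ equals $\frac{c_{k-1}}{c_j}|\mathcal{H}(h)|^{k-1-j}\Theta^{j}\ge 0$, so $(-1)^{k-1}\mathcal{P}_{k-1}$ is positive semi-definite with trace bounded above by the same estimate. Applying Corollary~\ref{coroOY} to the semi-elliptic operator $(-1)^{k-1}\mathcal{L}_{k-1}$ and running the previous argument with $(-1)^{k-1}\mathcal{L}_{k-1}\sigma(h)=c_{k-1}\rho(h)\big((-1)^{k-1}\mathcal{H}(h)^{k}+\Theta^{k}H_k\big)$ and $\Theta(p_j)\to 1$ leads to $H_k\le|\mathcal{H}(h^{*})|^{k}$ and $H_k\ge|\mathcal{H}(h_{*})|^{k}$, hence $|\mathcal{H}(h_{*})|\le|\mathcal{H}(h^{*})|$; as $\mathcal{H}(h_{*}),\mathcal{H}(h^{*})\le 0$ this gives $\mathcal{H}(h_{*})\ge\mathcal{H}(h^{*})$, and again monotonicity of $\mathcal{H}$ together with $\mathcal{H}'>0$ almost everywhere forces $h_{*}=h^{*}$.

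I expect the crux to be not any individual estimate — once semi-ellipticity and the trace bound are in place, the conclusion follows exactly as in Theorem~\ref{mainthm} — but the sign bookkeeping in \eqref{luis.21}: one must check that in each sign regime of $\Theta$ the alternating factors $(-1)^{j}\Theta^{j}$, the powers $\mathcal{H}(h)^{k-1-j}$ and the overall sign $(-1)^{k-1}$ combine to make $\mathcal{P}_{k-1}$ (respectively $(-1)^{k-1}\mathcal{P}_{k-1}$) a non-negative combination of the $P_j$'s with uniformly bounded trace. This is precisely where the hypotheses are used: the elliptic point, which yields $P_1,\dots,P_{k-1}>0$ and \eqref{garding}, is needed for positive semi-definiteness, while $\Sigma$ being contained in a slab together with $\sup_\Sigma|H_1|<+\infty$ is needed for the trace bound.
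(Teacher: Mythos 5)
Your proposal is correct and follows essentially the same route as the paper's own proof: after invoking Corollary~\ref{coroOY} and Lemma~\ref{lemmamainthm}, you split into the two sign regimes of~$\Theta$, verify that $\mathcal{P}_{k-1}$ (resp.\ $(-1)^{k-1}\mathcal{P}_{k-1}$) is positive semi-definite with uniformly bounded trace via the elliptic-point hypothesis, the Garding bound $H_j\le H_1^j$, the slab condition and $\sup_\Sigma H_1<+\infty$, then apply the Omori--Yau principle for $\mathcal{L}_{k-1}$ to $\sigma\circ h$ along maximizing and minimizing sequences to get $\mathcal{H}(h^*)^k\le H_k\le\mathcal{H}(h_*)^k$, and finish with monotonicity of~$\mathcal{H}$. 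The only (harmless) extra touch on your part is the explicit preliminary reduction to the non-compact case via Theorem~\ref{thmHrcompact}, which the paper leaves implicit.
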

\begin{proof}

By Corollary \ref{coroOY}, we know that the Omori-Yau maximum principle holds on $\Sigma$ for the Laplacian operator, so that we may apply Lemma
\ref{lemmamainthm}. Thus, in the case where $\Theta\leq 0$ we have $\mathcal{H}(h)\geq 0$ and therefore
\[
(-1)^j\mathcal{H}(h)^{k-1-j}\Theta^{j}\geq 0
\]
for every $j=0,\ldots,k-1$. Since the operators $P_0=I, P_1, \ldots, P_{k-1}$ are all positive definite, it follows from
here that the operator $\mathcal{P}_{k-1}$  is positive semi-definite or, in other words, that the differential
operator $\mathcal{L}_{k-1}$ is semi-elliptic. Furthermore, since $0\leq -\Theta\leq 1$,
\[
\mathrm{Tr}(\mathcal{P}_{k-1})=
c_{k-1}\sum_{j=0}^{k-1}(-1)^j\mathcal{H}(h)^{k-1-j}\Theta^{j}H_j\leq
c_{k-1}\sum_{j=0}^{k-1}\mathcal{H}(h^*)^{k-1-j}H^{*}_j,
\]
where $h^*=\sup_\Sigma h<+\infty$ and $H_j^{*}=\sup_\Sigma H_j\leq(\sup_\Sigma H_1)^{j}<+\infty$ because of
\eqref{garding}. Hence by Corollary \ref{coroOY}, the Omori-Yau maximum principle holds for the operator
$\mathcal{L}_{k-1}$ and, proceeding as in the proof of Theorem \ref{mainthm}, we may find two sequences
$\{p_j\}_{j\in\enne}\subset\Sigma$ and $\{q_j\}_{j\in\enne}\subset\Sigma$ satisfying
\[
\lim_{j\rightarrow+\infty}h(p_j)=h^{*}, \quad \text{and} \quad \lim_{j\rightarrow+\infty}h(q_j)=h_{*},
\]
\[
\lim_{j\rightarrow+\infty}\Theta(p_j)=\lim_{j\rightarrow+\infty}\Theta(q_j)=-1,
\]
\[
\mathcal{L}_{k-1}\sigma(h)(p_j)=
c_{k-1}\rho(h(p_j))(\mathcal{H}(h(p_j))^{k}+(-1)^{k-1}\Theta^{k}(p_j)H_{k})<\frac{1}{j},
\]
and
\[
\mathcal{L}_{k-1}\sigma(h)(q_j)=
c_{k-1}\rho(h(q_j))(\mathcal{H}(h(q_j))^{k}+(-1)^{k-1}\Theta^{k}(q_j)H_{k})<\frac{1}{j}.
\]
Making $j\rightarrow+\infty$ in the inequalties above, we obtain that
\[
\mathcal{H}(h^*)^k\leq H_k\leq \mathcal{H}(h_*)^k,
\]
which implies that $h_*=h^*$, as in the proof of Theorem \ref{mainthm}.

Finally, in the case where $\Theta\geq 0$ we proceed again as in the proof of Theorem \ref{mainthm}, working
now with the operator $(-1)^{k-1}\mathcal{L}_{k-1}$, which in this case is semi-elliptic and with
$\mathrm{Tr}((-1)^{k-1}\mathcal{P}_{k-1})$ bounded from above.
\end{proof}

As in the previous section for Theorem \ref{mainthmproper}, Theorem \ref{thmHr} remains true if we replace condition \eqref{sectional} by the stronger
condition of $\Sigma^n$ having radial sectional curvature bounded from below by a constant. By applying Lemma
\ref{lemmacurvature}, we see that this happens when the sectional curvature of $\p^n$ is itself bounded from
below.
\begin{corollary}
Let $\p^n$ be a complete Riemannian manifold with sectional curvature bounded from below and let
$f: \Sigma^n \ra I \times_{\rho} \p^n$ be a complete hypersurface with constant $k$-mean curvature $H_k$,
$3\leq k\leq n$. Assume that there exists an elliptic point in $\Sigma$,
$\sup_{\Sigma}|H_1|<+\infty$ and $\Sigma$ is contained in a
slab. If $\mathcal{H}'(t)>0$ almost everywhere and the angle function $\Theta$ does not change sign,
then $f(\Sigma^n)$ is a slice.
\end{corollary}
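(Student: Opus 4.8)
The plan is to deduce this corollary from Theorem \ref{thmHr}, in exactly the same way that Corollary \ref{coromainthm} was deduced from Theorem \ref{mainthm}: the only thing to verify is that the present hypotheses force the radial sectional curvature of $\Sigma$ to be bounded from below by a constant, which is a stronger requirement than condition \eqref{sectional}.

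First I would establish the curvature estimate on $\Sigma$. Since $\Sigma$ admits an elliptic point and $H_k$ is constant, the Garding inequalities \eqref{garding} give $H_1\geq H_2^{1/2}>0$, hence $0<H_2\leq H_1^2$; combined with $\|A\|^2=n^2H_1^2-n(n-1)H_2$ and $\sup_\Sigma|H_1|<+\infty$ this yields $\sup_\Sigma\|A\|^2\leq n^2(\sup_\Sigma H_1)^2<+\infty$. Thus the hypotheses of Lemma \ref{lemmacurvature} are met — $\p^n$ has sectional curvature bounded from below, $\sup_\Sigma\|A\|^2<+\infty$, and $f(\Sigma)$ lies in a slab — so the sectional curvature of $\Sigma$, and in particular $K^{\mathrm{rad}}_\Sigma$, is bounded from below by some constant, say $K^{\mathrm{rad}}_\Sigma\geq -a$ with $a>0$.

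Next I would feed this bound into the hypotheses of Theorem \ref{thmHr}. Taking $G(t)=a+t^2$ one checks at once that $G$ is smooth, even at the origin, and satisfies conditions (i)--(iv) of \eqref{condG}: $G(0)=a>0$, $G'(t)=2t\geq 0$, $\int^{+\infty}G^{-1/2}=+\infty$, and $tG(t^{1/2})/G(t)=(at+t^2)/(a+t^2)\to 1$; moreover $-G(r)\leq -a\leq K^{\mathrm{rad}}_\Sigma$, so condition \eqref{sectional} holds with this $G$. All the remaining assumptions of Theorem \ref{thmHr} (completeness of $\Sigma$, $H_k$ a positive constant with $3\leq k\leq n$, existence of an elliptic point, $\sup_\Sigma|H_1|<+\infty$, containment in a slab, $\mathcal{H}'(t)>0$ almost everywhere, $\Theta$ of constant sign) are carried over verbatim, so Theorem \ref{thmHr} applies and gives that $f(\Sigma^n)$ is a slice.

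There is essentially no serious obstacle here: the whole content is already packaged in Lemma \ref{lemmacurvature} and Theorem \ref{thmHr}. The only point needing a (routine) check is the admissibility of a constant lower curvature bound within the class of functions $G$ allowed in \eqref{condG}, which is the short computation above and is precisely the reduction already used after Corollary \ref{coromainthm} and in the passage from Theorem \ref{mainthm} to Theorem \ref{mainthmproper}.
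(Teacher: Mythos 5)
Your proposal is correct and follows the paper's own route precisely: it uses the Garding inequalities (valid since $H_k$ is a positive constant and an elliptic point exists) to get $H_2>0$, hence $\sup_\Sigma\|A\|^2\leq n^2(\sup_\Sigma H_1)^2<+\infty$; it then invokes Lemma \ref{lemmacurvature} to bound the sectional curvature of $\Sigma$ from below, and feeds this into Theorem \ref{thmHr}. The one place where you supply more detail than the paper is the explicit choice $G(t)=a+t^2$ to satisfy conditions (i)--(iv) of \eqref{condG}; this is a useful clarification (and indeed necessary, since a constant $G$ would fail condition (iv)), but it does not change the argument.
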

Indeed, by \eqref{garding} we know that $H_2>0$, so that
$\sup_{\Sigma}\|A\|^2\leq n^2(\sup_{\Sigma}H_1)^2<+\infty$ and we may apply Lemma \ref{lemmamainthm} to conclude
that the radial sectional curvature of $\Sigma$ is bounded from below. The result then follows from Theorem \ref{thmHr}.

Finally, similarly to what happened in the previous section, condition \eqref{sectional} has been used in the proof of Theorem
\ref{thmHr} only to guarantee that
the Omori-Yau maximum principle holds on $\Sigma$ for the Laplacian and for the semi-elliptic operator $\mathcal{L}_{k-1}$
(or $-\mathcal{L}_{k-1}$). Therefore, the theorem remains true under any other hypothesis guaranteing that property.
Then, and as a consequence of Corollary \ref{OYproperly}, we can also state the following:
\begin{theorem}
\label{thmHrproper}
Let $\p^n$ be a complete, non-compact, Riemannian manifold whose radial sectional curvature satisfies
condition \eqref{sectP}.
Let $f: \Sigma^n \ra I \times_{\rho} \p^n$ be a properly immersed hypersurface of constant $k$-mean curvature,
$3\leq k\leq n$. Assume that there exists an elliptic point in $\Sigma$,
$\sup_{\Sigma}|H_1|<+\infty$ and that $\Sigma$ is contained in a slab.
If $\mathcal{H}'(t)>0$ almost everywhere and the angle function $\Theta$ does not change sign,
then $f(\Sigma^n)$ is a slice.
\end{theorem}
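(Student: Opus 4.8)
The plan is to follow the proof of Theorem \ref{thmHr} essentially word for word. As the authors already note, condition \eqref{sectional} enters that proof only to invoke Corollary \ref{coroOY}, guaranteeing that the Omori--Yau maximum principle holds on $\Sigma$ both for the Laplacian (so that Lemma \ref{lemmamainthm} applies) and for the semi-elliptic operator $\mathcal{L}_{k-1}$ (or $(-1)^{k-1}\mathcal{L}_{k-1}$, according to the sign of $\Theta$). Here that same conclusion will instead be extracted from Corollary \ref{OYproperly}, whose hypotheses are met because $\Sigma$ is properly immersed in a slab and admits an elliptic point.

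First I would verify $\sup_\Sigma\|A\|^2<+\infty$. The existence of an elliptic point together with $H_k$ constant forces the immersion to be two-sided with $H_1>0$ for the chosen orientation, and the Garding inequalities \eqref{garding} give $0<H_2\leq H_1^2$ and, more generally, $H_j^{*}:=\sup_\Sigma H_j\leq(\sup_\Sigma H_1)^{j}<+\infty$ for $1\leq j\leq k-1$. Hence, from $\|A\|^2=\mathrm{Tr}(A^2)=n^2H_1^2-n(n-1)H_2$ we obtain $\sup_\Sigma\|A\|^2\leq n^2(\sup_\Sigma H_1)^2<+\infty$. By Corollary \ref{OYproperly}(2), $\Sigma$ is then complete and the Omori--Yau maximum principle holds on $\Sigma$ for the Laplacian and for every semi-elliptic operator $\mathrm{Tr}(P\circ\hess)$ with $\sup_\Sigma\mathrm{Tr}P<+\infty$.

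Next I would check that $\mathcal{L}_{k-1}$ is one such operator. Exactly as in the proof of Theorem \ref{thmHr}: if $\Theta\leq 0$, then Lemma \ref{lemmamainthm} gives $\mathcal{H}(h)\geq 0$, so $(-1)^j\mathcal{H}(h)^{k-1-j}\Theta^{j}\geq 0$ for each $j$, and since $P_0=I,P_1,\dots,P_{k-1}$ are positive definite the tensor $\mathcal{P}_{k-1}$ of \eqref{luis.21} is positive semi-definite, i.e. $\mathcal{L}_{k-1}$ is semi-elliptic. For the trace bound, $\mathrm{Tr}(\mathcal{P}_{k-1})=c_{k-1}\sum_{j=0}^{k-1}(-1)^j\mathcal{H}(h)^{k-1-j}\Theta^{j}H_j$; since $h$ takes values in $[t_1,t_2]$ we have $|\mathcal{H}(h)|\leq\max_{[t_1,t_2]}|\mathcal{H}|<+\infty$, and together with $|\Theta|\leq 1$ and $H_j\leq H_j^{*}<+\infty$ this yields $\sup_\Sigma\mathrm{Tr}(\mathcal{P}_{k-1})<+\infty$. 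Therefore the Omori--Yau maximum principle holds on $\Sigma$ for $\mathcal{L}_{k-1}$; the case $\Theta\geq 0$ is identical after replacing $\mathcal{L}_{k-1}$ by $(-1)^{k-1}\mathcal{L}_{k-1}$, whose semi-ellipticity and trace bound follow in the same way. With these two facts established, the remainder is a transcription of the proof of Theorem \ref{thmHr}: apply the maximum principle for $\mathcal{L}_{k-1}$ to $\sigma(h)$ (bounded above and below, as $\Sigma$ lies in a slab), use $\mathcal{L}_{k-1}\sigma(h)=c_{k-1}\rho(h)(\mathcal{H}(h)^{k}+(-1)^{k-1}\Theta^{k}H_k)$ and $\Theta(p_j)^2=1-\|\nabla h(p_j)\|^2\to 1$ along the two resulting sequences to get $\mathcal{H}(h^*)^k\leq H_k\leq\mathcal{H}(h_*)^k$, and conclude $h_*=h^*$ since $\mathcal{H}$ is strictly increasing. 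Thus $f(\Sigma^n)$ is a slice.

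The only delicate point — and the main obstacle — is precisely the passage through Corollary \ref{OYproperly}: one must first secure $\sup_\Sigma\|A\|^2<+\infty$ (via the elliptic point and the Garding chain) before the extrinsic Omori--Yau principle is available, and then keep track of the fact that the slab hypothesis re-enters in bounding $\mathrm{Tr}(\mathcal{P}_{k-1})$ through the bound on $\mathcal{H}(h)$. Everything past that is the already-established machinery of Sections 5 and 6.
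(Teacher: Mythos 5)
Your proposal is correct and follows essentially the same route the paper takes: the paper dispatches Theorem~\ref{thmHrproper} by observing that condition~\eqref{sectional} enters the proof of Theorem~\ref{thmHr} only through the Omori--Yau maximum principle for the Laplacian and $\mathcal{L}_{k-1}$, and that Corollary~\ref{OYproperly} supplies the same thing once $\sup_\Sigma\|A\|^2<+\infty$, which you correctly deduce from the elliptic point, Garding's inequalities, and $\sup_\Sigma|H_1|<+\infty$ exactly as the paper does in the analogous remark after Theorem~\ref{mainthmproper}. Your additional check that $\mathrm{Tr}(\mathcal{P}_{k-1})$ is bounded is also the same computation the paper carries out inside the proof of Theorem~\ref{thmHr}, using $|\Theta|\leq 1$, boundedness of $\mathcal{H}(h)$ on the slab, and $H_j^*<+\infty$.
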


\section{Further results for hypersurfaces with constant higher order mean curvatures}
In this last section we introduce some further results for the case of constant higher order mean curvatures. As a first result in this direction we shall prove the next
\begin{theorem}\label{main2compact}
Let $f:\Sigma^n \ra M^{n+1}=I \times_{\rho} \p^n$ be a compact hypersurface of constant $k$-mean curvature, $2\leq k \leq n$ and suppose that $\mathcal{H}$ does not vanish. Assume that
\begin{equation}\label{seccurv}
K_{\p^n} \geq \sup_I \{{\rho'}^2-\rho''\rho\},
\end{equation}
$K_{\p^n}$ being the sectional curvature of $\p^n$, and that the angle function $\Theta$ does not change sign. Then either $f(\Sigma^n)$ is a slice over a compact $\p^n$ or $M^{n+1}$ has constant sectional curvature and $\Sigma^n$ is a geodesic hypersphere. The latter case cannot occur if the inequality \eqref{seccurv} is strict.
\end{theorem}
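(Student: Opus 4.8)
The plan is to combine three ingredients: the automatic existence of an elliptic point on a compact hypersurface with nowhere vanishing $\mathcal{H}$; the maximum principle for the semi-elliptic operator $\mathcal{L}_{k-1}$ of Section~6; and a pair of generalized Hsiung--Minkowski integral formulae for the operators $L_j$ whose divergence correction terms are controlled in sign by the hypothesis \eqref{seccurv}. First I would normalize and produce an elliptic point. Replacing $t$ by $-t$ if necessary we may assume $\mathcal{H}=\rho'/\rho>0$ on the slab containing $f(\Sigma^n)$, which affects neither \eqref{seccurv} nor the statement. Since $\Sigma^n$ is compact, $h=\pi_I\circ f$ attains a maximum at some $\pmax$; there $\nabla h(\pmax)=0$, so $\Theta(\pmax)^2=1$ by \eqref{luis.11}. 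Choosing the orientation of $\Sigma$ with $\Theta(\pmax)=-1$, \eqref{hessh} gives $\hess h(\pmax)=\mathcal{H}(h^*)I-A(\pmax)\le 0$, hence $A(\pmax)\ge\mathcal{H}(h^*)I>0$ and $\pmax$ is an elliptic point. As recalled in the Preliminaries, this makes $H_k$ a positive constant, forces $H_1>0$ and the operators $P_1,\dots,P_{k-1}$ positive definite on all of $\Sigma$, and makes the Garding chain \eqref{garding} valid everywhere; moreover $\Theta\le 0$ on $\Sigma$, since $\Theta$ has constant sign and is $-1$ at $\pmax$, whence $\Theta(\pmin)=-1$ at a minimum point $\pmin$ of $h$.

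Next, exactly as in the case $\Theta\le 0$ of the proof of Theorem~\ref{thmHrcompact}, each summand of $\mathcal{P}_{k-1}$ in \eqref{luis.21} is a nonnegative multiple of one of the positive definite operators $P_0=I,P_1,\dots,P_{k-1}$ (since $\mathcal{H}(h)>0$ and $(-1)^j\Theta^j=(-\Theta)^j\ge 0$), so $\mathcal{P}_{k-1}$ is positive semidefinite and $\mathcal{L}_{k-1}$ is semi-elliptic. Evaluating the identity $\mathcal{L}_{k-1}\sigma(h)=c_{k-1}\rho(h)(\mathcal{H}(h)^k+(-1)^{k-1}\Theta^kH_k)$ at $\pmax$ (where $\hess\sigma(h)\le 0$) and at $\pmin$ (where $\hess\sigma(h)\ge 0$), and using $|\Theta(\pmax)|=|\Theta(\pmin)|=1$, yields
\begin{equation*}
\mathcal{H}(h^*)^k\le H_k\le\mathcal{H}(h_*)^k .
\end{equation*}
If $h^*=h_*$ then $f(\Sigma^n)$ is the slice $\p_{h^*}$, which being compact forces $\p^n$ compact: this is the first alternative. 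So assume henceforth $h_*<h^*$.

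Finally, I would bring in \eqref{seccurv} through integral formulae. Integrating \eqref{lrsigma}, with $k$ replaced by $k-1$ and by $k-2$, over the closed manifold $\Sigma$ and using $L_ju=\mathrm{div}(P_j\nabla u)-\pair{\mathrm{div}P_j,\nabla u}$, one obtains the corrected Minkowski identities
\begin{equation*}
\int_\Sigma\bigl(\rho'(h)H_j+\rho(h)\Theta H_{j+1}\bigr)\di V=-\frac{1}{c_j}\int_\Sigma\pair{\mathrm{div}P_j,\nabla\sigma(h)}\di V ,\qquad j=k-2,\,k-1 .
\end{equation*}
Here $\mathrm{div}P_j$ is a contraction of the ambient curvature tensor of $I\times_\rho\p^n$ — written out in the proof of Lemma~\ref{lemmacurvature} — against $P_{j-1}$ and $N$; evaluating it on $\nabla\sigma(h)=\rho(h)(T-\Theta N)$ one must check that \eqref{seccurv}, equivalently $K_\p/\rho^2+\mathcal{H}'\ge 0$ (i.e. the slices $\p_t$ are at least as curved as the radial $2$-planes of $M$), fixes the sign of each $\pair{\mathrm{div}P_j,\nabla\sigma(h)}$, and that this term vanishes at a point only when $\nabla\sigma(h)=0$ there or \eqref{seccurv} is an equality along the relevant planes at that point. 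Feeding these identities into the Cauchy--Schwarz inequality relating $\int_\Sigma\rho(h)H_{k-1}\di V$ to $\int_\Sigma\rho(h)H_{k-2}\di V$ and $\int_\Sigma\rho(h)H_k\di V$, together with the fact that $H_k$ is a constant satisfying $\mathcal{H}(h^*)^k\le H_k\le\mathcal{H}(h_*)^k$ and the Newton inequality $H_{k-1}^2\ge H_kH_{k-2}$, one is driven into the equality case. Since $h_*<h^*$ forces $\nabla\sigma(h)\not\equiv 0$, equality gives $K_\p\equiv\sup_I\{(\rho')^2-\rho''\rho\}$ on the range of $h$ and $\Sigma^n$ totally umbilic; then $K_\p$ is constant and the relation $(\rho')^2-\rho''\rho=K_\p$ forces $(\rho''/\rho)'\equiv 0$, so $M^{n+1}$ has constant sectional curvature, and a compact totally umbilic hypersurface with constant positive $H_k$ in a space form is a geodesic hypersphere — the second alternative. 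If in addition \eqref{seccurv} is strict, the identity $K_\p\equiv\sup_I\{(\rho')^2-\rho''\rho\}$ cannot hold, so the equality case is realized only with $\nabla\sigma(h)\equiv 0$, i.e. $h$ constant and $f(\Sigma^n)$ a slice; hence the geodesic-hypersphere case does not occur. The main obstacle is precisely this last step: carrying out the explicit computation of $\pair{\mathrm{div}P_j,\nabla\sigma(h)}$ in the warped product, verifying that \eqref{seccurv} pins down its sign with exactly the stated equality condition, and organizing the two corrected Minkowski identities through Cauchy--Schwarz so that the elementary estimate of the previous step closes the argument; matching the equality case to the statement \emph{$M$ is locally a space form} is the subtle geometric point.
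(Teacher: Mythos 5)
Your opening moves coincide with the paper's: the normalization $\mathcal{H}>0$, the elliptic point at $\pmax$ obtained from $\Hess h(\pmax)=\mathcal{H}(h^*)I-A(\pmax)\le 0$ with the orientation fixed so that $\Theta(\pmax)=-1$, and the resulting positivity of $H_k$, ellipticity of $L_{k-1}$, and global bound $\Theta\le 0$. Your intermediate estimate $\mathcal{H}(h^*)^k\le H_k\le\mathcal{H}(h_*)^k$ via $\mathcal{L}_{k-1}\sigma(h)$ evaluated at $\pmax,\pmin$ is correct, although it does not appear in the paper's argument and, since $\mathcal{H}'$ is not assumed monotone here, by itself it gives no contradiction when $h_*<h^*$.

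Where you diverge is the real engine of the proof, and there the proposal has a genuine gap. The paper does not use integral Hsiung--Minkowski identities at all; it introduces the auxiliary function $\phi=\sigma(h)H_k^{1/k}+\rho(h)\Theta$ and computes the \emph{pointwise} quantity $L_{k-1}\phi$ by combining the Minkowski-type identity \eqref{lrsigma} for $L_{k-1}\sigma(h)$ with the formula for $L_{k-1}\hat{\Theta}$ (the lemma preceding Corollary \ref{lrthetasecconst}, applied with $k$ replaced by $k-1$). This yields $L_{k-1}\phi=A+B+C$ with $A\ge0$ from $\hat\Theta\le0$ and the Garding chain, $B=c_{k-1}\rho'(h)H_k^{1/k}\bigl(H_{k-1}-H_k^{(k-1)/k}\bigr)\ge0$ from $\rho'>0$ and \eqref{garding}, and $C\ge0$ precisely from \eqref{seccurv} together with $\pair{P_{k-1}\nabla h,\nabla h}\le\|\nabla h\|^2\,\mathrm{Tr}\,P_{k-1}$. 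The maximum principle for the elliptic $L_{k-1}$ then forces $\phi$ constant, hence $A=B=C\equiv0$; $B\equiv0$ gives total umbilicity via Garding, and the dichotomy follows from the CMC result Theorem~3.4 of \cite{aliasdajczer}. Your replacement for this — two integrated identities $\int_\Sigma(\rho'H_j+\rho\Theta H_{j+1})=-\frac{1}{c_j}\int_\Sigma\pair{\mathrm{div}P_j,\nabla\sigma(h)}$ for $j=k-2,k-1$, together with Cauchy--Schwarz and Newton's inequality — is not carried out, and you say so yourself. The obstacles are real: (i) for general $\p$, the sign of $\pair{\mathrm{div}P_j,\nabla\sigma(h)}$ requires the full Lemma \ref{divpr} (not Lemma \ref{lemmacurvature}, which you cite instead) together with a pointwise curvature estimate, and controlling the sum over $l$ of $(-1)^{j-1-l}\pair{\overline{\R}(E_i,A^{j-1-l}\nabla h)N,P_lE_i}$ is not a one-line matter; (ii) the presence of the weight $\rho'(h)$ in front of $H_j$ but $\rho(h)\Theta$ in front of $H_{j+1}$ breaks the telescoping that makes Hsiung's argument close in space forms, and you do not indicate how Cauchy--Schwarz bridges $\int\rho'H_{k-1}$, $\int\rho'H_{k-2}$, $\int\rho\Theta H_{k-1}$, $\int\rho\Theta H_k$; (iii) the reduction from ``$K_\p\equiv\alpha$ along $f(\Sigma)$'' to ``$K_\p$ constant on $\p$'' needs an argument (e.g.\ Schur), and is glossed over. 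So the route is genuinely different in spirit (integral versus pointwise), but in its present form it is a plan, not a proof; the missing ingredient that the paper supplies and you do not is the pointwise identity for $L_{k-1}\hat{\Theta}$, which lets one localize the role of \eqref{seccurv} into a single sign condition and avoid the integral bookkeeping entirely.
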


First we proceed with the proof of two important lemmas that we will be essential in the proof of Theorem \ref{main2compact}. For Lemma \ref{divpr} see also Lemma 3.1 in \cite{aliasliramalacarne}.
\begin{lemma}\label{divpr}
Let $\Sigma^{n} \ra \overline{M}^{n+1}$ be an isometric immersion. Let $E_1,...,E_n$ be a local orthonormal frame on $\Sigma$ and $N$ be local unit normal. Then
\begin{equation}\label{divPk}
\sum_{i=1}^n\pair{(\nabla_{E_i} P_k)X,E_i}=\sum_{j=0}^{k-1}\sum_{i=1}^n(-1)^{k-1-j}\pair{\overline{\R}(E_i,A^{k-1-j}X)N,P_jE_i},
\end{equation}
for every vector field $X \in T\Sigma$.
\end{lemma}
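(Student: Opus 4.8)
The plan is to prove \eqref{divPk} by induction on $k$, using the recursion $P_k = S_kI - AP_{k-1}$ together with the Codazzi equation. Both sides of \eqref{divPk} are tensorial in $X$ and independent of how $E_1,\dots,E_n$ is extended off a point, so I would fix $p\in\Sigma$, choose the frame geodesic at $p$, and read all the identities below at $p$. For $k=0$ one has $P_0=I$, hence $\nabla P_0\equiv 0$, so the left-hand side of \eqref{divPk} vanishes while the right-hand side is the empty sum; this starts the induction. Writing out the case $k=1$ first, which is just Codazzi contracted against the frame, is a convenient way to pin down the sign conventions once and for all.

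For the inductive step, assume \eqref{divPk} holds with $k-1$ in place of $k$, for every tangent field. Since $P_{k-1}=\sum_{j=0}^{k-1}(-1)^jS_{k-1-j}A^j$ is a polynomial in $A$ with functional coefficients, $[A,P_{k-1}]=0$, so also $P_k = S_kI - P_{k-1}A$. Differentiating this, contracting, and using that $P_{k-1}$ is self-adjoint gives
\[
\sum_{i}\pair{(\nabla_{E_i}P_k)X,E_i}
= X(S_k)\;-\;\sum_{i}\pair{(\nabla_{E_i}P_{k-1})(AX),E_i}\;-\;\sum_{i}\pair{(\nabla_{E_i}A)X,\,P_{k-1}E_i}.
\]
To the first sum on the right I would apply the inductive hypothesis with $AX$ in place of $X$; since $A\cdot A^{k-2-j}=A^{k-1-j}$, this reproduces exactly the $j=0,\dots,k-2$ terms of the claimed right-hand side with the correct signs (the overall minus turns $(-1)^{k-2-j}$ into $(-1)^{k-1-j}$). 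In the second sum I would substitute the Codazzi equation, writing $(\nabla_{E_i}A)X = (\nabla_XA)E_i$ plus the tangential part of an ambient curvature term: the first piece contracts to $\mathrm{Tr}(P_{k-1}\nabla_XA)$, and the curvature piece is, up to the sign fixed by the paper's conventions, precisely the missing $j=k-1$ summand $\sum_i\pair{\overline{\R}(E_i,X)N,P_{k-1}E_i}$. Finally, the differentiated Newton identity $X(S_k)=\mathrm{Tr}(P_{k-1}\nabla_XA)$ — which follows from $\partial S_k/\partial A = P_{k-1}$ — cancels the two $X(S_k)$ contributions, leaving exactly \eqref{divPk}.

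The argument is essentially bookkeeping; its only external ingredients are the commutation $[A,P_{k-1}]=0$ (immediate from the recursion), the Codazzi equation, and the identity $X(S_k)=\mathrm{Tr}(P_{k-1}\nabla_XA)$. I do not expect a genuine obstacle here, only the nuisance of keeping the sign conventions for $\overline{\R}$ and for Codazzi consistent throughout so that the $j=k-1$ term emerges with the sign recorded in \eqref{divPk}; this is exactly why it pays to dispose of the $k=1$ case explicitly at the outset.
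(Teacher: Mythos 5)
Your proposal is correct and follows essentially the same route as the paper: induction on $k$ via the Newton recursion $P_k=S_kI-P_{k-1}A$ together with the Codazzi equation. The only difference is cosmetic — the paper's inductive step suppresses the intermediate $X(S_k)$ terms and jumps directly to $\sum_i\pair{(\nabla_{E_i}P_k)X,E_i}=-\sum_i\pair{(\nabla_{E_i}P_{k-1})AX,E_i}+\sum_i\pair{\overline{\R}(E_i,X)N,P_{k-1}E_i}$, whereas you make explicit the cancellation coming from $X(S_k)=\mathrm{Tr}(P_{k-1}\nabla_XA)$.
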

\begin{proof}
We will prove Equation \eqref{divPk} by induction on $k$, $1\leq k\leq n-1$.
It is not difficult to prove that this is true for $k=1$ using Codazzi equation and the definition of $P_1$.
Assume that the equation holds for $k-1$. Then, using again Codazzi equation we get
\begin{align*}
\sum_{i=1}^n\pair{(\nabla_{E_i}P_k)X,E_i}=&-\sum_{i=1}^n\pair{(\nabla_{E_i} P_{k-1})AX,E_i}+\sum_{i=1}^n\pair{\overline{\R}(E_i,X)N,P_{k-1}E_i}\\
=&-\sum_{j=0}^{k-2}\sum_{i=1}^n(-1)^{k-2-j}\pair{\overline{\R}(E_i,A^{k-1-j}X)N,P_jE_i}\\
&+\sum_{i=1}^n\pair{\overline{\R}(E_i,X)N,P_{k-1}E_i}\\
=&\sum_{j=0}^{k-1}\sum_{i=1}^n(-1)^{k-1-j}\pair{\overline{\R}(E_i,A^{k-1-j}X)N,P_jE_i}.
\end{align*}
For further details see \cite{aliasliramalacarne}, paying attention to the different convention for the sign of $\overline{\R}$.
\end{proof}
\begin{corollary}
Let $\Sigma^{n} \ra I\times_{\rho} \p^n$ be an immersed hypersurface and assume that $\p^n$ has constant sectional curvature $\kappa$. Then
\begin{equation}\label{divPksecconst}
\mathrm{div}P_k=-(n-k)\Theta\Big(\frac{\kappa}{\rho^2(h)}+\mathcal{H}'(h)\Big)P_{k-1}\nabla h.
\end{equation}
\end{corollary}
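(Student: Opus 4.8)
The plan is to feed into Lemma~\ref{divpr} the explicit form of the curvature tensor $\overline{\R}$ of the warped product $M^{n+1}=I\times_{\rho}\p^n$ recorded in the proof of Lemma~\ref{lemmacurvature}, specialized to the case $K_{\p^n}\equiv\kappa$, and then to collapse the resulting double sum by an elementary identity for the Newton operators.

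First I would fix a local orthonormal frame $\{E_1,\dots,E_n\}$ on $\Sigma$, put $Y=A^{k-1-j}X$, and compute $\overline{\R}(E_i,Y)N$. Since $E_i,Y\in T\Sigma$ are orthogonal to $N$, every summand of $\overline{\R}(U,V)W$ carrying a factor $\pair{V,W}$ or $\pair{U,W}$ vanishes when $W=N$; moreover $\nabla h=T-\Theta N$ gives $\pair{E_i,T}=\pair{E_i,\nabla h}$ and $\pair{Y,T}=\pair{Y,\nabla h}$, so the warped-product curvature identity reduces to
\[
\overline{\R}(E_i,Y)N=\R_{\p}(E_i^*,Y^*)N^*+\mathcal{H}'(h)\,\Theta\big(\pair{E_i,\nabla h}Y-\pair{Y,\nabla h}E_i\big).
\]
When $\p^n$ has constant sectional curvature $\kappa$, $\R_{\p}(E_i^*,Y^*)N^*=\kappa\big(\pair{Y^*,N^*}_{\p}E_i^*-\pair{E_i^*,N^*}_{\p}Y^*\big)$; the $\p$-inner products follow from $\pair{\,,\,}=\di t^2+\rho^2\pair{\,,\,}_{\p}$ together with $\pair{Y,N}=\pair{E_i,N}=0$, which yield $\pair{Y^*,N^*}_{\p}=-\Theta\pair{Y,\nabla h}/\rho^2(h)$ and the analogous formula for $E_i$. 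Writing $Y^*=Y-\pair{Y,\nabla h}T$ and $E_i^*=E_i-\pair{E_i,\nabla h}T$, the $T$-components cancel and one obtains
\[
\overline{\R}(E_i,Y)N=\Theta\Big(\frac{\kappa}{\rho^2(h)}+\mathcal{H}'(h)\Big)\big(\pair{E_i,\nabla h}Y-\pair{Y,\nabla h}E_i\big).
\]

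Substituting this into \eqref{divPk} and pairing with an arbitrary $X\in T\Sigma$: since $P_j$ is self-adjoint and commutes with $A$, and using $\sum_i\pair{E_i,\nabla h}\pair{Y,P_jE_i}=\pair{P_jY,\nabla h}$, $\sum_i\pair{E_i,P_jE_i}=\mathrm{Tr}P_j$ and $Y=A^{k-1-j}X$, one finds
\[
\sum_{i}\pair{\overline{\R}(E_i,Y)N,P_jE_i}=\Theta\Big(\frac{\kappa}{\rho^2(h)}+\mathcal{H}'(h)\Big)\pair{\big(P_j-(\mathrm{Tr}P_j)I\big)A^{k-1-j}\nabla h,\,X},
\]
whence, summing over $j$ and comparing with \eqref{divPk},
\[
\mathrm{div}P_k=\Theta\Big(\frac{\kappa}{\rho^2(h)}+\mathcal{H}'(h)\Big)\sum_{j=0}^{k-1}(-1)^{k-1-j}\big(P_j-(\mathrm{Tr}P_j)I\big)A^{k-1-j}\nabla h .
\]
There remains the purely algebraic identity
\[
\sum_{j=0}^{k-1}(-1)^{k-1-j}\big(P_j-(\mathrm{Tr}P_j)I\big)A^{k-1-j}=-(n-k)P_{k-1},
\]
which I would prove by induction on $k$: for $k=1$ it reads $P_0-(\mathrm{Tr}P_0)I=(1-n)I=-(n-1)P_0$, and the inductive step follows by isolating the $j=k$ summand and invoking the recursion $P_k=S_kI-AP_{k-1}$ together with $\mathrm{Tr}P_k=(n-k)S_k$. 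Plugging this into the previous display gives exactly \eqref{divPksecconst}.

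The main obstacle I anticipate is the bookkeeping in the second paragraph: decomposing $N$, $E_i$ and $Y$ into their $\partial_t$- and $\p$-components and correctly tracking the warping factor $\rho^2(h)$ when converting $\pair{\,,\,}_{\p}$ into $\pair{\,,\,}$, so that the two curvature contributions combine into the single coefficient $\kappa/\rho^2(h)+\mathcal{H}'(h)$; the Newton-operator identity, though it governs the shape of the final answer, is a routine induction.
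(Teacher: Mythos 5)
Your proposal is correct and follows essentially the same route as the paper: you feed the warped-product curvature formula (specialized to constant $K_\p=\kappa$) into Lemma~\ref{divpr}, and then collapse the double sum with the inductive Newton-tensor identity (your $\sum_{j=0}^{k-1}(-1)^{k-1-j}(P_j-(\mathrm{Tr}P_j)I)A^{k-1-j}=-(n-k)P_{k-1}$ is the paper's $B_k=-(n-k)P_{k-1}$, since $\mathrm{Tr}P_j=(n-j)S_j=c_jH_j$). The only presentational difference is that you first compute the vector $\overline{\R}(E_i,Y)N$ and then take traces, whereas the paper takes the traces term by term; also note the small slip ``isolating the $j=k$ summand'' should read $j=k-1$.
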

\begin{proof}
Let $E_1,...,E_n$ be a local orthonormal frame on $\Sigma^n$ and observe that
\begin{equation*}
\pair{\mathrm{div}P_k,X}=\sum_{i=1}^n\pair{(\nabla_{E_i}P_k)X,E_i}
\end{equation*}
for every vector field $X \in T\Sigma$.
Fix $j$, $j=0,...,k-1$. Then
\begin{align*}
\sum_{i=1}^n\pair{\overline{\R}(E_i,A^{k-1-j}X)N,P_jE_i}=&\sum_{i=1}^n\pair{\R_{\p}({\pi_{\p}}_*E_i,{\pi_{\p}}_*A^{k-1-j}X){\pi_{\p}}_*N,P_jE_i}\\
&+\Theta\mathcal{H}'(h)(\pair{P_j\nabla h,A^{k-1-j}X}\\
&-c_jH_j\pair{\nabla h,A^{k-1-j}X}).
\end{align*}
Since $\p^n$ has constant sectional curvature $\kappa$ it follows that
$$
\R_{\p}(Y,Z)W=\kappa(\pair{Z,W}_{\p}Y-\pair{Y,W}_{\p}Z).
$$
Hence a direct calculation shows that
\begin{align*}
\sum_{i=1}^n\pair{R_{\p}({\pi_{\p}}_*E_i,{\pi_{\p}}_*A^{k-1-j}X){\pi_{\p}}_*N,P_jE_i}=&\frac{\kappa}{\rho^2(h)}\Theta(\pair{P_j\nabla h,A^{k-1-j}X}\\
&-c_jH_j\pair{\nabla h,A^{k-1-j}X}).
\end{align*}
We claim that
$$
B_k:=\sum_{j=0}^{k-1}(-1)^{k-j-1}(P_jA^{k-1-j}-c_jH_jA^{k-1-j})=-(n-k)P_{k-1}.
$$
In this case the conclusion of the Corollary is immediate.
We prove the claim by induction on $k$, $k=0,...,n-1$.
The case $k=0$ is trivial. Assume that we proved the equation for $k-1$.
Then
\begin{align*}
B_k=&P_{k-1}-c_{k-1}H_{k-1}I-B_{k-1}\circ A\\
=&P_{k-1}-c_{k-1}H_{k-1}I+(n-k+1)P_{k-2}A\\
=&-(n-k)P_{k-1}.
\end{align*}

\end{proof}
\begin{lemma}
Let $\Sigma^n$ be a hypersurface immersed into a warped product space $I\times_{\rho}\p^n$, with angle function $\Theta$ and height function $h$. Let $\hat{\Theta}=\rho \Theta$. Then, for every $k=0,...,n-1$ we have
\begin{align*}
L_k \hat{\Theta}=&-{n \choose {k+1}}\rho(h)\pair{\nabla h, \nabla H_{k+1}}-\rho'(h)c_k H_{k+1}\\
&-\hat{\Theta}\mathcal{H}'(h)(\norm{\nabla h}^2c_kH_k-\pair{P_k \nabla h,\nabla h})-\frac{\hat{\Theta}}{\rho(h)^2}\beta_k\\
&-\hat{\Theta}{n \choose{k+1}} (nH_1H_{k+1}-(n-k-1)H_{k+2}),
\end{align*}
where
$$
\beta_k=\sum_{i=1}^n\mu_{k,i}K_{\p}({\pi_{\p}}_*E_i,{\pi_{\p}}_*N)\norm{{\pi_{\p}}_*E_i\wedge {\pi_{\p}}_*N}^2.
$$
Here the $\mu_{k,i}$'s stand for the eigenvalues of $P_k$ and $\{E_1,\ldots,E_n\}$ is a local orthonormal frame on $\Sigma$ diagonalizing $A$.
\end{lemma}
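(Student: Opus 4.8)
The plan is to obtain the formula by computing $\hess\hat{\Theta}$ explicitly and then taking its $P_k$-trace, all computations being carried out at a point of $\Sigma$ with respect to a local orthonormal frame $\{E_1,\dots,E_n\}$ diagonalizing $A$ (hence also $P_k$), with $AE_i=\kappa_iE_i$ and $P_kE_i=\mu_{k,i}E_i$. The starting remark is that, along $f$, $\hat{\Theta}=\rho(h)\pair{N,T}=\pair{N,V}$, where $V$ is the restriction to $\Sigma$ of the ambient field $\rho(\pi_I)T$. From $\overline{\nabla}\pi_I=T$ and $\overline{\nabla}_XT=\mathcal{H}(X-\pair{X,T}T)$ one checks that $\overline{\nabla}_XV=\rho'(\pi_I)X$ for every $X\in TM$ (so $V$ is a closed conformal field), while $V|_\Sigma=\rho(h)\nabla h+\hat{\Theta}N$ by \eqref{luis.11}. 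Combining this with the Weingarten formula $\overline{\nabla}_XN=-AX$ gives at once
\[
\nabla\hat{\Theta}=-\rho(h)\,A\nabla h .
\]

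Differentiating once more, inserting \eqref{hessh} for $\hess h$ together with $\nabla_X(A\nabla h)=(\nabla_XA)\nabla h+A\hess h(X)$ and the identity $\rho(h)\mathcal{H}(h)=\rho'(h)$, the two terms proportional to $\pair{X,\nabla h}\,A\nabla h$ cancel and one is left with $\hess\hat{\Theta}(X)=-\rho(h)(\nabla_XA)\nabla h-\rho'(h)\,AX-\hat{\Theta}\,A^2X$. Applying $P_k$ and tracing,
\[
L_k\hat{\Theta}=-\rho(h)\sum_i\pair{(\nabla_{E_i}A)\nabla h,P_kE_i}-\rho'(h)\,\mathrm{Tr}(P_kA)-\hat{\Theta}\,\mathrm{Tr}(P_kA^2).
\]
Using the Newton recursion $P_{j+1}=S_{j+1}I-AP_j$ together with $\mathrm{Tr}P_j=(n-j)S_j$ and $\mathrm{Tr}(AP_j)=(j+1)S_{j+1}$, one gets immediately $\mathrm{Tr}(P_kA)=c_kH_{k+1}$ and $\mathrm{Tr}(P_kA^2)=S_1S_{k+1}-(k+2)S_{k+2}={n \choose {k+1}}\big(nH_1H_{k+1}-(n-k-1)H_{k+2}\big)$, which already account for the second and the last terms in the statement.

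It remains to identify $\sum_i\pair{(\nabla_{E_i}A)\nabla h,P_kE_i}$. Here I would use the Codazzi equation (equivalently, the identity underlying Lemma \ref{divpr}) to write $(\nabla_{E_i}A)\nabla h=(\nabla_{\nabla h}A)E_i-(\overline{\R}(E_i,\nabla h)N)^\top$. The first summand contributes $\sum_i\pair{(\nabla_{\nabla h}A)E_i,P_kE_i}=\mathrm{Tr}(P_k\,\nabla_{\nabla h}A)=\pair{\nabla S_{k+1},\nabla h}={n \choose {k+1}}\pair{\nabla h,\nabla H_{k+1}}$, by the classical fact $\nabla_XS_{k+1}=\mathrm{Tr}(P_k\,\nabla_XA)$; this is the first term of the statement. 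For the curvature summand I would substitute the expression of $\overline{\R}$ in terms of $\R_{\p}$ and of $\mathcal{H},\mathcal{H}'$ recorded in the proof of Lemma \ref{lemmacurvature}, with $U=E_i$, $V=\nabla h$, $W=N$: since $\pair{\nabla h,N}=\pair{E_i,N}=0$, the $\mathcal{H}^2$-term and the last $T$-term drop, and with $\pair{E_i,T}=\pair{E_i,\nabla h}$, $\pair{\nabla h,T}=\norm{\nabla h}^2$, $\pair{N,T}=\Theta$ the two $\mathcal{H}'$-terms collapse to $\mathcal{H}'(h)\,\Theta\big(\pair{E_i,\nabla h}\nabla h-\norm{\nabla h}^2E_i\big)$, which after pairing with $P_kE_i$ and summing gives $\mathcal{H}'(h)\,\Theta\big(\pair{P_k\nabla h,\nabla h}-\norm{\nabla h}^2c_kH_k\big)$. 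For the $\R_{\p}$-part one uses that ${\pi_{\p}}_*$ kills $T$, so that $(\nabla h)^{*}=-\Theta N^{*}$ and $\R_{\p}(E_i^{*},(\nabla h)^{*})N^{*}=-\Theta\,\R_{\p}(E_i^{*},N^{*})N^{*}$; pairing with $P_kE_i=\mu_{k,i}E_i$ and passing from the metric of $\p^n$ to that of $M^{n+1}$ exactly as in the derivation of \eqref{luis.8} produces the factor $\rho(h)^{-2}$ and turns $\sum_i$ into $-\Theta\,\rho(h)^{-2}\beta_k$. Collecting the three pieces, multiplying by $-\rho(h)$ and using $\rho(h)\Theta=\hat{\Theta}$ reproduces precisely the first, third and fourth terms, completing the formula.

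The computational heart of the argument — and the step most prone to error — is this last one: one must keep the sign conventions perfectly consistent (the Weingarten sign $\overline{\nabla}_XN=-AX$, the sign in Codazzi's equation, and the sign convention for $\overline{\R}$ adopted throughout the paper), and one must correctly track the conformal factor $\rho^2$ relating the metrics of $\p^n$ and $M^{n+1}$ when extracting $\beta_k$ from the $\R_{\p}$-part of the ambient curvature tensor; the rest is routine algebra with Newton transformations.
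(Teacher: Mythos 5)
Your argument is correct and reaches the stated formula by essentially the same route as the paper. The identical key steps are: the closed-conformal-field computation giving $\nabla\hat{\Theta}=-\rho(h)A\nabla h$, the second derivative $\hess\hat{\Theta}(X)=-\rho(h)(\nabla_XA)\nabla h-\rho'(h)AX-\hat{\Theta}A^2X$, the Newton-tensor traces $\mathrm{Tr}(P_kA)=c_kH_{k+1}$ and $\mathrm{Tr}(P_kA^2)={n\choose k+1}(nH_1H_{k+1}-(n-k-1)H_{k+2})$, and the final substitution of the warped-product curvature tensor.

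The one place where you deviate, mildly, is in producing the gradient term. The paper rewrites $-P_k(\nabla_{E_i}A)\nabla h$ via the product rule and the recursion $P_{k+1}=S_{k+1}I-AP_k$, and then feeds the two resulting divergence sums into Lemma~\ref{divpr}, after which everything but the $j=k$ term telescopes and yields $\sum_i\pair{\overline{\R}(E_i,\nabla h)N,P_kE_i}-\nabla h(S_{k+1})$. You instead apply Codazzi directly to $(\nabla_{E_i}A)\nabla h$ and invoke the classical algebraic identity $\nabla_XS_{k+1}=\mathrm{Tr}(P_k\nabla_XA)$, which follows from the fact that the eigenvalues of $P_k$ are precisely the partial derivatives $\partial S_{k+1}/\partial\kappa_i$. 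This is a valid shortcut that bypasses the explicit induction inside Lemma~\ref{divpr}, and it makes the appearance of the $\pair{\nabla h,\nabla H_{k+1}}$ term more transparent. Similarly, rather than splitting $\nabla h=T-\Theta N$ and computing $\overline{\R}(E_i,T)N$ and $\overline{\R}(E_i,N)N$ separately as the paper does (where the two $\rho''/\rho$ terms cancel), you substitute $U=E_i$, $V=\nabla h$, $W=N$ directly into the warped-product curvature formula; this is cleaner because the orthogonality $\pair{\nabla h,N}=\pair{E_i,N}=0$ kills the $\mathcal{H}^2$-term and the trailing $T$-term immediately. Both of these are repackagings of the same computation; neither a different key lemma nor a genuinely new idea is involved, so the two proofs are equivalent in substance. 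Your sign bookkeeping (Weingarten, Codazzi, $\overline{\R}$) and the $\rho^{-2}$ conversion from the $\p^n$-metric are consistent with the paper's conventions.
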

\begin{proof}
Since $\rho(t)T$ is a conformal vector field
$$
\nabla \hat{\Theta}=-\rho(h)A\nabla h.
$$
Therefore, using Equation \eqref{hessh} we find
$$
\nabla_X \nabla \hat{\Theta}= -\rho(h)(\nabla_X A)\nabla h-\rho'(h)AX-\hat{\Theta}A^2X.
$$
Hence
\begin{align*}
L_k \hat{\Theta}=& -\rho(h)\sum_{i=1}^n\pair{P_k(\nabla_{E_i} A)\nabla h,E_i}\\
&-\rho'(h)c_kH_{k+1}-{n \choose {k+1}}\hat{\Theta}(H_1H_{k+1}-(n-k-1)H_{k+2}).
\end{align*}
Using the expression of the covariant derivative of a tensor field we get
\begin{align*}
-P_k(\nabla_{E_i} A)\nabla h=&(\nabla_{E_i} P_k)A\nabla h-(\nabla_{E_i}P_kA)\nabla h\\
=&(\nabla_{E_i} P_k)A\nabla h+(\nabla_{E_i} P_{k+1})\nabla h-E_i(S_{k+1})\nabla h.
\end{align*}
By equation \eqref{divPk} it follows that
\begin{align*}
-\sum_{i=1}^n\pair{P_k(\nabla_{E_i} A)\nabla h,E_i}=&\sum_{i=1}^{n}\pair{(\nabla_{E_i} P_k)A\nabla h,E_i}\\
&+\sum_{i=1}^n\pair{(\nabla_{E_i} P_{k+1})\nabla h,E_i}-\nabla h(S_{k+1})\\
=&\sum_{i=1}^n\pair{\overline{\R}(E_i,\nabla h)N,P_k E_i}-\nabla h(S_{k+1}).
\end{align*}
Since $\nabla h=T-\Theta N$, we can write
$$
\overline{\R}(E_i,\nabla h)N=\overline{\R}(E_i,T)N-\Theta\overline{\R}(E_i,N)N.
$$
Using Gauss equation and observing that ${\pi_{\p}}_*T=0$ we get
$$
\overline{\R}(E_i,T)N=-(\mathcal{H}(h)^2+\mathcal{H}'(h))\Theta E_i=-\frac{\rho''(h)}{\rho(h)}\Theta E_i.
$$
So
$$
\sum_{i=1}^n\pair{\overline{\R}(E_i,T)N,P_kE_i}=-\frac{\rho''(h)}{\rho(h)}\Theta c_kH_k.
$$
Again by Gauss equation
\begin{align*}
\overline{\R}(E_i,N)N=&\R_{\p}({\pi_{\p}}_*E_i,{\pi_{\p}}_*N){\pi_{\p}}_*N-\mathcal{H}(h)^2E_i\\
&+\mathcal{H}'(h)\Theta(\pair{E_i,\nabla h}N-\Theta E_i)-\mathcal{H}'(h)\pair{E_i,\nabla h}T.
\end{align*}
Assume that the orthonormal basis $\{E_i\}_1^n$ diagonalizes $A$ and hence $P_k$, that is $P_kE_i=\mu_{k,i}E_i$. Then
\begin{align*}
\sum_{i=1}^n\pair{\overline{\R}(E_i,N)N,P_kE_i}=&\frac{1}{\rho(h)^2}\sum_{i=1}^n\mu_{k,i}K_{\p}({\pi_{\p}}_*E_i,{\pi_{\p}}_*N)\norm{{\pi_{\p}}_*E_i\wedge {\pi_{\p}}_*N}^2\\
&-\frac{\rho''(h)}{\rho(h)}c_kH_k+\mathcal{H}'(h)(\norm{\nabla h}^2c_kH_k-\pair{P_k \nabla h,\nabla h}).
\end{align*}
Thus,
\begin{align*}
\sum_{i=1}^n\pair{\overline{\R}(E_i,\nabla h)N,P_k E_i}=&\sum_{i=1}^n\pair{\overline{\R}(E_i,T)N,P_kE_i}-\Theta\sum_{i=1}^n\pair{\overline{\R}(E_i,N)N,P_kE_i}\\
=&-\frac{\Theta}{\rho(h)^2}\sum_{i=1}^n\mu_{k,i}K_{\p}({\pi_{\p}}_*E_i,{\pi_{\p}}_*N)\norm{{\pi_{\p}}_*E_i\wedge{\pi_{\p}}_*N}^2\\
-&\Theta\mathcal{H}'(h)(\norm{\nabla h}^2c_kH_k-\pair{P_k \nabla h,\nabla h})
\end{align*}
and this concludes the proof of the lemma.
\end{proof}
\begin{corollary}\label{lrthetasecconst}
Let $\Sigma^n$ be a hypersurface immersed into a warped product space $I\times_{\rho}\p^n$, with angle function $\Theta$ and height function $h$. Assume that $\p^n$ has constant sectional curvature $\kappa$ and let $\hat{\Theta}=\rho(h) \Theta$. Then, for every $k=0,...,n-1$ we have
\begin{align*}
L_k \hat{\Theta}=&-{n \choose {k+1}}\rho(h)\pair{\nabla h, \nabla H_{k+1}}-\rho'(h)c_k H_{k+1}\\
&-\hat{\Theta}\Big(\frac{\kappa}{\rho^2(h)}+\mathcal{H}'(h)\Big)(\norm{\nabla h}^2c_kH_k-\pair{P_k \nabla h,\nabla h})\\&-\hat{\Theta}{n \choose{k+1}} (nH_1H_{k+1}-(n-k-1)H_{k+2}).
\end{align*}
\end{corollary}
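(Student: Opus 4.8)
The plan is to start from the formula for $L_k\hat{\Theta}$ established in the preceding lemma and to evaluate there the term
$$
\beta_k=\sum_{i=1}^n\mu_{k,i}K_{\p}({\pi_{\p}}_*E_i,{\pi_{\p}}_*N)\norm{{\pi_{\p}}_*E_i\wedge {\pi_{\p}}_*N}^2
$$
under the hypothesis that $\p^n$ has constant sectional curvature $\kappa$. Setting $K_{\p}\equiv\kappa$ leaves $\beta_k=\kappa\sum_{i=1}^n\mu_{k,i}\norm{{\pi_{\p}}_*E_i\wedge {\pi_{\p}}_*N}^2$, so the whole matter reduces to rewriting this weighted sum of wedge-norms in terms of the height function $h$ and the angle function $\Theta$.

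To do this I would argue exactly as in the derivation of \eqref{luis.8}: for an arbitrary $U\in TM$ use the orthogonal splitting $U=U^*+\pair{U,T}T$, together with $\pair{E_i,T}=\pair{E_i,\nabla h}$, $\pair{N,T}=\Theta$ and $\norm{\nabla h}^2=1-\Theta^2$ (all consequences of $\nabla h=T-\Theta N$, cf. \eqref{luis.11}, and of the $E_i$'s and $N$ being unit with $N\perp T\Sigma$). A short inner-product computation then gives $\norm{{\pi_{\p}}_*E_i}^2=1-\pair{E_i,\nabla h}^2$, $\norm{{\pi_{\p}}_*N}^2=\norm{\nabla h}^2$ and $\pair{{\pi_{\p}}_*E_i,{\pi_{\p}}_*N}=-\Theta\pair{E_i,\nabla h}$, and hence, after using $\norm{\nabla h}^2+\Theta^2=1$,
$$
\norm{{\pi_{\p}}_*E_i\wedge {\pi_{\p}}_*N}^2=\norm{{\pi_{\p}}_*E_i}^2\norm{{\pi_{\p}}_*N}^2-\pair{{\pi_{\p}}_*E_i,{\pi_{\p}}_*N}^2=\norm{\nabla h}^2-\pair{E_i,\nabla h}^2.
$$
Since $\{E_1,\dots,E_n\}$ diagonalizes $P_k$ with eigenvalues $\mu_{k,i}$, multiplying by $\mu_{k,i}$ and summing yields
$$
\sum_{i=1}^n\mu_{k,i}\norm{{\pi_{\p}}_*E_i\wedge {\pi_{\p}}_*N}^2=\norm{\nabla h}^2\,\mathrm{Tr}P_k-\pair{P_k\nabla h,\nabla h}=\norm{\nabla h}^2c_kH_k-\pair{P_k\nabla h,\nabla h},
$$
where I used $\mathrm{Tr}P_k=(n-k)S_k=c_kH_k$ and the expansion of $\nabla h$ in the eigenbasis. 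Therefore $\beta_k=\kappa(\norm{\nabla h}^2c_kH_k-\pair{P_k\nabla h,\nabla h})$.

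It then only remains to substitute this expression into the preceding lemma and to combine the term $-\hat{\Theta}\beta_k/\rho(h)^2$ with the term $-\hat{\Theta}\mathcal{H}'(h)(\norm{\nabla h}^2c_kH_k-\pair{P_k\nabla h,\nabla h})$ already present there; this reproduces precisely the factor $\mathcal{H}'(h)+\kappa/\rho^2(h)$ in the statement. I do not expect a genuine obstacle: the only delicate point is keeping track of whether a given norm or sectional curvature is taken with respect to the ambient warped metric or to $\pair{,}_{\p}$, and once the wedge-norm has been expressed purely through $\nabla h$ and $\Theta$ the remainder is one of the trace identities already used repeatedly in Sections 4--6.
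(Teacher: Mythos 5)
Your proposal is correct and follows the same route the paper takes: the corollary is obtained from the preceding lemma simply by substituting $K_{\p}\equiv\kappa$ into $\beta_k$ and then using the wedge-norm identity $\norm{{\pi_{\p}}_*E_i\wedge{\pi_{\p}}_*N}^2=\norm{\nabla h}^2-\pair{E_i,\nabla h}^2$ (which the paper itself records and applies in exactly this way in the proof of Theorem \ref{main2compact}), together with $\mathrm{Tr}\,P_k=c_kH_k$, to turn the weighted sum into $\norm{\nabla h}^2c_kH_k-\pair{P_k\nabla h,\nabla h}$.
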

We are now ready to give the
\begin{proof}[Proof of Theorem \ref{main2compact}]
We may assume without loss of generality that $\mathcal{H}(h)>0$ on $\Sigma$. Since $\Sigma^n$ is compact, there exists a point $p_0 \in \Sigma$ where the height function attains its maximum. Then $\nabla h(p_0)=0$, $\Theta(p_0)=\pm 1$ and by \eqref{hessh}
$$
\Hess h(p_0)(v,v)=\mathcal{H}(h^*)\pair{v,v}+\Theta(p_0)\pair{Av,v}(p_0)\leq 0.
$$
If $\Theta(p_0)=-1$, then
$$
\pair{Av,v}(p_0)\geq\mathcal{H}(h^*)\pair{v,v}>0,
$$
for any $v\neq0$.
Thus $p_0$ is an elliptic point, $H_k$ is a positive constant and by Garding inequalities
$$
H_1 \geq H_2^{\frac{1}{2}} \geq \cdots \geq H_k^{\frac 1k}>0$$
with equality only at umbilical points. In particular, $\Sigma$ is two-sided and then $\Theta\leq0$. If $\Theta(p_0)=1$, changing the orientation we have the same conclusion.

Consider the function
$$
\phi=\sigma(h)H_k^{\frac 1k}+\rho(h)\Theta.
$$
Let us prove that $L_{k-1} \phi \geq 0$.
Since $H_k$ is constant we have
\begin{align*}
L_{k-1}\phi=&H_k^{\frac 1k}L_{k-1} \sigma(h)+L_{k-1} \hat{\Theta} \\
=& c_{k-1}H_k^{\frac 1k}(\rho'(h)H_{k-1}+\hat{\Theta}H_k)-c_{k-1}H_{k-1}\hat{\Theta}\mathcal{H}'(h)\norm{\nabla h}^2\\
&+ \hat{\Theta}\mathcal{H}'(h)\pair{P_{k-1} \nabla h, \nabla h}-\hat{\Theta}{n \choose k} (nH_1H_k-(n-k)H_{k+1})\\
&-\rho'(h)c_{k-1} H_k- \frac{\hat{\Theta}}{\rho(h)^2}\sum_{i=1}^n\mu_{k-1,i}K_{\p}({\pi_{\p}}_*E_i,{\pi_{\p}}_*N)\norm{{\pi_{\p}}_*E_i\wedge{\pi_{\p}}_*N}^2\\
=&A+B+C,
\end{align*}
where
$$
A=-\hat{\Theta}{n \choose k} (nH_1H_k-(n-k)H_{k+1}-kH_k^{\frac{k+1}{k}}),
$$
$$
B=c_{k-1}\rho'(h)(H_{k-1}H_k^{\frac 1k}-H_k)
$$
and
\begin{align*}
C=&-\hat{\Theta}\mathcal{H}'(h)(\norm{\nabla h}^2c_{k-1}H_{k-1}-\pair{P_{k-1} \nabla h,\nabla h})\\
&-\frac{\hat{\Theta}}{\rho(h)^2}\sum_{i=1}^n\mu_{k-1,i}K_{\p}({\pi_{\p}}_*E_i,{\pi_{\p}}_*N)\norm{{\pi_{\p}}_*E_i\wedge{\pi_{\p}}_*N}^2.
\end{align*}
Then by Garding inequalities,
$$
H_{k-1}H_k^{\frac 1k}-H_k=H_k^{\frac 1k}(H_{k-1}-H_k^{\frac{k-1}{k}}) \geq 0.
$$
Moreover,
$$nH_1H_k-kH_k^{\frac{k+1}{k}} \geq nH_k^{\frac{k+1}{k}}-kH_k^{\frac{k+1}{k}}=(n-k)H_k^{\frac{k+1}{k}},
$$
hence
$$
nH_1H_k-kH_k^{\frac{k+1}{k}}-(n-k)H_{k+1} \geq (n-k)(H_k^{\frac{k+1}{k}}-H_{k+1})\geq 0.
$$
Finally, let $\alpha:=\sup_I \{{\rho'}^2-\rho''\rho\}$. Since
$$
\norm{{\pi_{\p}}_*E_i\wedge{\pi_{\p}}_*N}^2=\norm{\nabla h}^2-\pair{E_i,\nabla h}^2,
$$
taking into account that the $\mu_{k-1,i}$'s are positive, we have
\begin{align*}
&\sum_{i=1}^n\mu_{k-1,i}K_{\p}({\pi_{\p}}_*E_i,{\pi_{\p}}_*N)\norm{{\pi_{\p}}_*E_i\wedge{\pi_{\p}}_*N}^2\\
& \geq \alpha \sum_{i=1}^n\mu_{k-1,i}\norm{{\pi_{\p}}_*E_i\wedge{\pi_{\p}}_*N}^2\\
&=\alpha(c_{k-1}H_{k-1}\norm{\nabla h}^2-\pair{P_{k-1}\nabla h,\nabla h}).
\end{align*}
Hence,
\begin{align*}
\frac{1}{\rho(h)^2}&\sum_{i=1}^n\mu_{k-1,i}K_{\p}({\pi_{\p}}_*E_i,{\pi_{\p}}_*N)\norm{{\pi_{\p}}_*E_i\wedge{\pi_{\p}}_*N}^2\\
&+\mathcal{H}'(h)(\norm{\nabla h}^2c_{k-1}H_{k-1}-\pair{P_{k-1} \nabla h,\nabla h})\\
\geq & \Big(\frac{\alpha}{\rho(h)^2}+\mathcal{H}'(h)\Big)(\norm{\nabla h}^2c_{k-1}H_{k-1}-\pair{P_{k-1} \nabla h,\nabla h})
\geq 0,
\end{align*}
where the last inequality follows from $\alpha=\sup_I\{-\rho^2\mathcal{H}'\}$ and from the fact that $P_{k-1}$ is a positive definite operator.\\
Thus, $L_{k-1}\phi \geq 0$. Since $L_{k-1}$ is an elliptic operator and $\Sigma$ is compact, we conclude by the maximum principle that $\phi$ must be constant. Hence $L_{k-1} \phi=0$ and the three terms $A$, $B$ and $C$ in $L_{k-1} \phi$ vanish on $\Sigma$.

In particular $B=0$ implies that $\Sigma$ is a totally umbilical hypersurface. Moreover, since $H_k$ is a positive constant and $\Sigma$ is totally umbilical, all the higher order mean curvatures are constant. In particular, $H_1$ is constant and the conclusion follows by Theorem 3.4 \cite{aliasdajczer}.
\end{proof}
On the other hand, using Theorem 3.1 in \cite{aliasdajczer} we can give the following version of Theorem 3.4 in \cite{aliasdajczer} for the complete case.
\begin{prop}
Let $M^{n+1}=I \times_{\rho} \p^n$ be a warped product space and assume that the Ricci curvature of $\p^n$ satisfies
\begin{equation}\label{riccicurv}
\ricc_{\p} > \sup_I \{{\rho'}^2-\rho''\rho\}.
\end{equation}
Let $f:\Sigma^n \ra I \times_{\rho} \p^n$ be a  complete, parabolic, two-sided hypersurface with constant mean curvature. Suppose that
$$
f(\Sigma^n)\subset [t_1,t_2] \times \p^n,
$$
where $t_1$, $t_2$ are finite. If the angle function $\Theta$ does not change sign, then $f(\Sigma^n)$ is a slice.
\end{prop}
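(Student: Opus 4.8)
The plan is to run, for $k=1$ (so that $L_0=\Delta$), the argument in the proof of Theorem~\ref{main2compact}, with parabolicity replacing compactness and the Ricci bound \eqref{riccicurv} replacing the sectional bound \eqref{seccurv}. Since $\Theta$ does not change sign and $\Sigma$ is two-sided, I would first orient $\Sigma$ so that $\Theta\le 0$ (this only replaces $H_1$ by $-H_1$). With $\sigma(t)=\int_{t_0}^{t}\rho(s)\,\di s$ and $\hat\Theta=\rho(h)\Theta$, the test function is
\[
\phi=\sigma(h)\,H_1+\hat\Theta,
\]
the $k=1$ specialization of the function used in Theorem~\ref{main2compact}; since $f(\Sigma)\subset[t_1,t_2]\times\p^n$ and $|\Theta|\le 1$, $\phi$ is bounded on $\Sigma$.

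First I would compute $\Delta\phi$, combining $\Delta\sigma(h)=n\rho(h)(\mathcal{H}(h)+\Theta H_1)$ from \eqref{lrsigma} (case $k=0$) with the expression for $\Delta\hat\Theta=L_0\hat\Theta$ coming from the Lemma that computes $L_k\hat\Theta$ (the one stated just before Corollary~\ref{lrthetasecconst}), specialized to $k=0$, $P_0=I$, $\nabla H_1=0$. As in the decomposition $L_{k-1}\phi=\mathcal{A}+\mathcal{B}+\mathcal{C}$ of that proof, the $\rho'(h)H_1$ terms cancel, the analogue of $\mathcal{B}$, namely $c_0\rho'(h)(H_0H_1-H_1)$, vanishes identically, and one is left with
\[
\Delta\phi=-n(n-1)\hat\Theta\,(H_1^{2}-H_2)\;-\;\hat\Theta\Big[(n-1)\mathcal{H}'(h)\norm{\nabla h}^{2}+\rho(h)^{-2}\beta_0\Big],
\]
where $\beta_0=\sum_i K_\p({\pi_{\p}}_*E_i,{\pi_{\p}}_*N)\norm{{\pi_{\p}}_*E_i\wedge{\pi_{\p}}_*N}^{2}$. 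The first term is $\ge 0$, because $\hat\Theta\le 0$ and $H_1^{2}\ge H_2$, with equality exactly at umbilical points. For the second term, since $\mathcal{H}'=(\rho''\rho-{\rho'}^{2})/\rho^{2}$ one has $(n-1)\mathcal{H}'(h)\norm{\nabla h}^{2}+\rho(h)^{-2}\beta_0=\rho(h)^{-2}\big(\beta_0-(n-1)({\rho'}^{2}-\rho''\rho)(h)\norm{\nabla h}^{2}\big)$, and the key point (see the obstacle) is that for $k=0$ the term $\beta_0$ equals $\norm{\nabla h}^{2}$ times $\ricc_{\p}$ evaluated in the unit direction of ${\pi_{\p}}_*N$; hence \eqref{riccicurv} makes this bracket $\ge 0$, and $>0$ wherever $\norm{\nabla h}\ne 0$. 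Thus $\Delta\phi\ge 0$ on $\Sigma$.

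Since $\Sigma$ is parabolic and $\phi$ is bounded above and subharmonic, $\phi$ is constant, whence $\Delta\phi\equiv 0$ and both summands above vanish identically. The vanishing of the first gives $H_1^{2}=H_2$ wherever $\Theta<0$, i.e.\ $\Sigma$ is totally umbilical there and hence, by continuity (once the degenerate case $\Theta\equiv 0$ is excluded), everywhere; being umbilical with $H_1$ constant, all $H_j$ are constant and $f(\Sigma^n)$ is a slice by Theorem~3.1 in \cite{aliasdajczer}. A more self-contained conclusion uses the strictness of \eqref{riccicurv}: the vanishing of the second summand then forces $\hat\Theta\norm{\nabla h}^{2}\equiv 0$, so, $\Sigma$ being connected and $\norm{\nabla h}^{2}=1-\Theta^{2}$, either $\Theta\equiv -1$ (then $\nabla h\equiv 0$, $h$ is constant and $f(\Sigma)$ is a slice) or $\Theta\equiv 0$. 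The latter is impossible: then $\norm{\nabla h}\equiv 1$ by \eqref{luis.11}, and \eqref{hessh} reads $\hess h(X)=\mathcal{H}(h)(X-\pair{X,\nabla h}\nabla h)$, so along a unit-speed geodesic $\gamma$ with $\gamma'(0)=\nabla h(\gamma(0))$ the function $u=h\circ\gamma$ satisfies $u''=\mathcal{H}(u)(1-(u')^{2})$ with $u'(0)=1$; hence $u'\equiv 1$, $u(s)=u(0)+s$, which contradicts the boundedness of $h$ together with the completeness of $\Sigma$.

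The step I expect to be the main obstacle is the identity identifying $\beta_0$ (the Gauss-equation curvature term that enters $\Delta\hat\Theta$) with $\norm{\nabla h}^{2}\,\ricc_{\p}$ in the direction ${\pi_{\p}}_*N$: for a general $k$ the corresponding term is a $P_{k-1}$-weighted sum of fiber sectional curvatures, controlled by \eqref{seccurv}, but for $k=0$ one has $P_0=I$ and this sum collapses to a fiber Ricci curvature, which has to be checked carefully — in particular keeping track of the curvature sign convention used in the paper and of the precise normalization in \eqref{riccicurv} (so that the strict bracket positivity is actually attained). A secondary, minor point is the disposal of the degenerate case $\Theta\equiv 0$ indicated above.
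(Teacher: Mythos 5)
Your argument is correct, and it is more self-contained than the paper's. The paper's own ``proof'' is essentially a two-sentence pointer: it cites Theorem~3.1 of \cite{aliasdajczer} for the subharmonicity of $\phi=H\sigma(h)+\hat\Theta$ and Equation~(3.8) there for the conclusion that the strict Ricci inequality forces $h$ to be constant once $\Delta\phi=0$. You reconstruct both ingredients directly from the present paper's Lemma for $L_k\hat\Theta$ at $k=0$, together with $\Delta\sigma(h)=n\rho(h)(\mathcal H(h)+\Theta H_1)$. Your resulting formula
\[
\Delta\phi=-n(n-1)\hat\Theta\,(H_1^2-H_2)-\hat\Theta\Big[(n-1)\mathcal H'(h)\norm{\nabla h}^2+\rho(h)^{-2}\beta_0\Big]
\]
is what one gets by tracing the $\rho'(h)H_1$-cancellation, and after orienting so that $\Theta\le 0$ each bracketed term is nonnegative (using $H_1^2\ge H_2$ for the first). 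The step you flag as the obstacle is indeed the crux: for $k=0$, $P_0=I$, so $\beta_0=\sum_i K_\p(E_i^*,N^*)\norm{E_i^*\wedge N^*}^2$, and — being careful about the conformal rescaling $\pair{,}_M=\rho^2\pair{,}_\p$ on fiber directions and the fact that $\{E_i^*\}$ is not $\p$-orthonormal — a short computation (using $\sum_i\pair{E_i^*,U}\pair{E_i^*,V}=\pair{U,V}-\pair{N^*,U}\pair{N^*,V}$ for fiber vectors $U,V$, and $\norm{N^*}^2=\norm{\nabla h}^2$) gives exactly $\beta_0=\norm{\nabla h}^2\,\ricc_\p(\hat N^*)$, where $\hat N^*$ is the $\p$-unit direction of ${\pi_\p}_*N$. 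With that, the second bracket becomes $\rho^{-2}\norm{\nabla h}^2\big(\ricc_\p(\hat N^*)-(n-1)({\rho'}^2-\rho''\rho)\big)$, and your normalization remark is on point: the statement's strict inequality $\ricc_\p>\sup_I\{{\rho'}^2-\rho''\rho\}$ yields strict positivity of this bracket only if $\ricc_\p$ is understood as the normalized Ricci curvature (divided by $n-1$), which is consistent with its reduction to the sectional-curvature bound of Theorem~\ref{main2compact} in the constant-curvature case. Given that, parabolicity plus boundedness forces $\phi$ constant and $\Delta\phi\equiv 0$; the vanishing of the second summand then forces $\hat\Theta\norm{\nabla h}^2\equiv 0$, hence by continuity and connectedness $\Theta\equiv 0$ or $\Theta\equiv -1$. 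Your ODE/completeness argument excludes $\Theta\equiv 0$ (with $\Theta\equiv 0$ one gets $\norm{\nabla h}\equiv 1$, and along the unit geodesic starting in the direction $\nabla h$ the uniqueness for $u''=\mathcal H(u)(1-(u')^2)$ with $u'(0)=1$ gives $u(s)=u(0)+s$, contradicting the slab bound). This gives a cleaner, fully self-contained proof than the remark in the paper, and is a welcome addition since the paper defers to an external equation that the reader cannot see.
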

For the proof, observe that since the function $\phi=H\sigma(h)+\hat{\Theta}$ is subharmonic and $\Sigma$ is parabolic, then it must be constant. In particular, $\Delta \phi=0$ and by Equation (3.8) in \cite{aliasdajczer} we conclude that $h$ has to be constant, because of the strict inequality in \eqref{riccicurv}.

In what follows, we extend this result to higher order mean curvatures. Towards this aim, we let $\mathfrak{L}_k$ be the operator
$$
\mathfrak{L}_k f=\mathrm{div}(P_k \nabla f),
$$
where $f \in C^\infty(\Sigma)$.
Notice that
$$
\mathfrak{L}_k f=\pair{\mathrm{div}P_k,\nabla f}+L_k f.
$$
We introduce the following
\begin{definition}
We will say that the manifold $\Sigma^n\hookrightarrow I \times_{\rho} \p^n$ is $\mathfrak{L}_k$-parabolic if the only bounded above $C^1$ solutions of the inequality
$$
\mathfrak{L}_k f \geq 0
$$
are constant.
\end{definition}
The following theorem is a special case of Theorem 2.6 in \cite{pirise2}
\begin{theorem}
Let $\Sigma^n\hookrightarrow I \times_{\rho} \p^n$ be a complete manifold. Fix an origin $o\in\Sigma$. If
\begin{equation}\label{eqrpar}
\Big(\sup_{\partial B_t} H_{k-1} \rm{vol}(\partial B_t)\Big)^{-1}\notin L^1(+\infty),
\end{equation}
where $\partial B_t$ is the geodesic sphere of radius $t$ centered at $o$, then $\Sigma^n$ is $\mathfrak{L}_{k-1}$-parabolic.
\end{theorem}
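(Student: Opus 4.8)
The plan is to deduce the statement from the general parabolicity criterion of Pigola, Rigoli and Setti (Theorem 2.6 in \cite{pirise2}), which applies to divergence-form operators $\mathrm{div}(T\nabla\,\cdot\,)$ associated to a symmetric, positive semi-definite $(1,1)$-tensor $T$ on a complete manifold, and which yields $\mathrm{div}(T\nabla\,\cdot\,)$-parabolicity as soon as $\big(\lambda_{+}(t)\,\mathrm{vol}(\partial B_t)\big)^{-1}\notin L^{1}(+\infty)$, where $\lambda_{+}(t)=\sup_{\partial B_t}\lambda_{\max}(T)$ and $\lambda_{\max}(T)$ is the largest eigenvalue of $T$. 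First I would observe that $\mathfrak{L}_{k-1}=\mathrm{div}(P_{k-1}\nabla\,\cdot\,)$ is of exactly this form: the Newton tensor $P_{k-1}$ is smooth and symmetric, and in the geometric situations to which this result is applied (a hypersurface carrying an elliptic point, so that $P_0,\dots,P_{k-1}$ are positive definite by the discussion in the Preliminaries and the Garding inequalities \eqref{garding}) it is positive semi-definite, so it is a legitimate coefficient tensor for the criterion of \cite{pirise2}.

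Next I would bound the top eigenvalue of $P_{k-1}$ by $H_{k-1}$. Since $P_{k-1}\geq 0$, all its eigenvalues are nonnegative, hence $\lambda_{\max}(P_{k-1})\leq\mathrm{Tr}\,P_{k-1}$; and by the trace identity recalled in Section 2, $\mathrm{Tr}\,P_{k-1}=(n-k+1)S_{k-1}=c_{k-1}H_{k-1}$. Therefore $\sup_{\partial B_t}\lambda_{\max}(P_{k-1})\leq c_{k-1}\sup_{\partial B_t}H_{k-1}$, and consequently
\[
\Big(\sup_{\partial B_t}\lambda_{\max}(P_{k-1})\,\mathrm{vol}(\partial B_t)\Big)^{-1}\;\geq\;\frac{1}{c_{k-1}}\Big(\sup_{\partial B_t}H_{k-1}\,\mathrm{vol}(\partial B_t)\Big)^{-1}.
\]
If the right-hand side fails to lie in $L^{1}(+\infty)$ — which is precisely hypothesis \eqref{eqrpar} — then, multiplication by the positive constant $c_{k-1}^{-1}$ being irrelevant to integrability at infinity, the left-hand side fails to lie in $L^{1}(+\infty)$ as well. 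Thus the hypothesis of Theorem 2.6 of \cite{pirise2} is satisfied with $T=P_{k-1}$, and that theorem gives the $\mathfrak{L}_{k-1}$-parabolicity of $\Sigma^n$, i.e. every $C^{1}$ function on $\Sigma$ that is bounded above and satisfies $\mathfrak{L}_{k-1}f\geq 0$ is constant.

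The only genuinely delicate point is making the reduction to \cite{pirise2} airtight: one must check that the precise hypotheses of Theorem 2.6 there (notably the positive semi-definiteness of the coefficient tensor, and whatever mild non-degeneracy or regularity it requires, including the harmless case $H_{k-1}=0$ where $P_{k-1}$ degenerates) are indeed in force for $T=P_{k-1}$ in the situation considered. This is handled entirely by the structural facts about Newton tensors recalled in Section 2, after which the quantity governing parabolicity in \cite{pirise2} is comparable, up to the fixed constant $c_{k-1}$, to the one appearing in \eqref{eqrpar}; everything else is the elementary bookkeeping above.
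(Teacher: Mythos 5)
Your argument matches the paper's: the paper proves this statement simply by declaring it a special case of Theorem~2.6 in \cite{pirise2} (with $T=P_{k-1}$, so that $\mathfrak{L}_{k-1}=\mathrm{div}(P_{k-1}\nabla\,\cdot)$), and your write-up supplies exactly the bookkeeping that makes this specialization explicit — namely the trace identity $\mathrm{Tr}\,P_{k-1}=c_{k-1}H_{k-1}$, the resulting bound on the top eigenvalue of $P_{k-1}$, and the observation that multiplying by the fixed constant $c_{k-1}$ does not affect non-integrability at infinity. You are also right to flag that positive semi-definiteness of $P_{k-1}$ is the (implicit) hypothesis under which the theorem and the notion of $\mathfrak{L}_{k-1}$-parabolicity make sense; in the applications in the paper this is guaranteed by the elliptic-point hypothesis and the Garding inequalities \eqref{garding}.
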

We are ready to state our last result.
\begin{theorem}\label{main2complete}
Let $M^{n+1}=I \times_{\rho} \p^n$ be a warped product space and assume that $\p^n$ has constant sectional curvature $\kappa$ satisfying
\begin{equation}\label{constseccurv}
\kappa > \sup_I \{{\rho'}^2-\rho''\rho\}.
\end{equation}
Let $f:\Sigma^n \ra I \times_{\rho} \p^n$ be a  complete hypersurface with $\sup_\Sigma |H_1|<+\infty$ and satisfying condition \eqref{eqrpar}. Suppose that $f$ has constant $k$-mean curvature, $2\leq k\leq n$, and
$$
f(\Sigma^n)\subset [t_1,t_2] \times \p^n,
$$
where $t_1$, $t_2$ are finite. Assume that either $k=2$ and $H_2$ is positive or $k\geq3$ and there exists an elliptic point $p \in \Sigma^n$. If $\mathcal{H}(h)$ and the angle function $\Theta$ do not change sign, then $f(\Sigma^n)$ is a slice.
\end{theorem}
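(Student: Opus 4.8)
The plan is to adapt the proof of Theorem \ref{main2compact}, replacing compactness by $\mathfrak{L}_{k-1}$-parabolicity. Since $\mathfrak{L}_{k-1}u=L_{k-1}u+\langle\mathrm{div}P_{k-1},\nabla u\rangle$, the new ingredient is the control of $\mathrm{div}P_{k-1}$, which is provided by the constant curvature of $\p^n$ through equation \eqref{divPksecconst}. We begin with reductions. Replacing $t$ by $-t$ in the ambient if necessary --- an isometry which leaves $A$ (hence $H_k$ and $P_{k-1}$), condition \eqref{constseccurv} and assumption \eqref{eqrpar} unchanged while turning $\mathcal{H}(h)$ into $-\mathcal{H}(h)$ --- we may assume $\mathcal{H}(h)\geq 0$ on $\Sigma$. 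If $k=2$, $H_2>0$ makes the immersion two-sided and we orient $\Sigma$ so that $H_1>0$; if $k\geq 3$, the elliptic point gives (see the Preliminaries) that $\Sigma$ is two-sided, $H_k>0$, $H_1>0$ and $P_1,\dots,P_{k-1}$ are positive definite for the chosen orientation, and the Garding inequalities \eqref{garding} hold. In all cases $H_1\geq H_k^{1/k}>0$; together with $\sup_\Sigma|H_1|<+\infty$ and $\|A\|^2=n^2H_1^2-n(n-1)H_2$ this gives $\sup_\Sigma\|A\|^2<+\infty$, so by Lemma \ref{lemmacurvature} the radial sectional curvature of $\Sigma$ is bounded below and, by Corollary \ref{coroOY}, the Omori--Yau maximum principle holds on $\Sigma$ for $\Delta$. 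Applying it to $h$ (bounded above) and using \eqref{hessh}, one gets $q_j\in\Sigma$ with $\|\nabla h(q_j)\|\to 0$ and $\mathcal{H}(h(q_j))(n-\|\nabla h(q_j)\|^2)+nH_1(q_j)\Theta(q_j)<1/j$; since $\mathcal{H}(h)\geq 0$ and $H_1\geq H_k^{1/k}>0$ this forces $\Theta(q_j)\to -1$, hence $\Theta\leq 0$ on $\Sigma$ (as $\Theta$ does not change sign).

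Set $\phi=\sigma(h)H_k^{1/k}+\hat\Theta$, with $\hat\Theta=\rho(h)\Theta$; $\phi$ is bounded, because $\Sigma$ lies in a slab. Using $\nabla\sigma(h)=\rho(h)\nabla h$, $\nabla\hat\Theta=-\rho(h)A\nabla h$, equation \eqref{lrsigma}, Corollary \ref{lrthetasecconst} (with $k-1$ in place of $k$ and $H_k$ constant) and \eqref{divPksecconst}, a computation paralleling the one in Theorem \ref{main2compact} yields $\mathfrak{L}_{k-1}\phi=A+B+C$, where
\[
A=-\hat\Theta\binom{n}{k}\Big(nH_1H_k-(n-k)H_{k+1}-kH_k^{(k+1)/k}\Big),\qquad
B=c_{k-1}\rho'(h)\Big(H_{k-1}H_k^{1/k}-H_k\Big),
\]
\[
C=-\Theta\rho(h)\Big(\frac{\kappa}{\rho^2(h)}+\mathcal{H}'(h)\Big)\Big[(n-k)\langle P_{k-1}\nabla h,\nabla h\rangle+(n-k+1)H_k^{1/k}\langle P_{k-2}\nabla h,\nabla h\rangle\Big].
\]
The expression for $C$ comes out of a cancellation between the curvature term of $L_{k-1}\hat\Theta$ and the term $\langle\mathrm{div}P_{k-1},\nabla\phi\rangle$, after writing $AP_{k-2}=S_{k-1}I-P_{k-1}$. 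By the Garding inequalities \eqref{garding} together with the Newton inequality $H_{k-1}H_{k+1}\leq H_k^2$, exactly as in the proof of Theorem \ref{main2compact}, $nH_1H_k-(n-k)H_{k+1}-kH_k^{(k+1)/k}\geq 0$ and $H_{k-1}H_k^{1/k}-H_k\geq 0$; and condition \eqref{constseccurv} is precisely $\sup_I(-\rho^2\mathcal{H}')<\kappa$, so $\kappa/\rho^2(h)+\mathcal{H}'(h)>0$. Since $\hat\Theta\leq 0$, $\rho'(h)=\rho(h)\mathcal{H}(h)\geq 0$, $\Theta\leq 0$ and $P_{k-1},P_{k-2}$ are positive semi-definite, all three terms are nonnegative, so $\mathfrak{L}_{k-1}\phi\geq 0$.

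Since $\phi$ is bounded above and $\Sigma$ is $\mathfrak{L}_{k-1}$-parabolic (by \eqref{eqrpar} and the theorem preceding), $\phi$ must be constant, so $\mathfrak{L}_{k-1}\phi\equiv 0$ and, the three summands being nonnegative, $C\equiv 0$. As $\rho(h)>0$, $\kappa/\rho^2(h)+\mathcal{H}'(h)>0$, $P_{k-2}$ is positive definite and $P_{k-1}$ positive semi-definite, $C\equiv 0$ forces, at each point, $\Theta=0$ or $\nabla h=0$; in the latter case $\Theta^2=1$. Hence $\Theta(1-\Theta^2)\equiv 0$, so by continuity $\Theta$ is a constant in $\{0,\pm 1\}$. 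The value $0$ is excluded: if $\Theta\equiv 0$ then $\mathrm{div}P_{k-1}=0$ by \eqref{divPksecconst} and, by \eqref{lrh}, $\mathfrak{L}_{k-1}h=L_{k-1}h=\mathcal{H}(h)\big(c_{k-1}H_{k-1}-\langle P_{k-1}\nabla h,\nabla h\rangle\big)\geq 0$, since $c_{k-1}H_{k-1}=\mathrm{Tr}P_{k-1}\geq\langle P_{k-1}\nabla h,\nabla h\rangle$ and $\mathcal{H}(h)\geq 0$; $\mathfrak{L}_{k-1}$-parabolicity would then force the bounded-above function $h$ to be constant, contradicting $\|\nabla h\|=\sqrt{1-\Theta^2}=1$. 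Therefore $\Theta\equiv\pm 1$, so $\|\nabla h\|\equiv 0$, $h$ is constant, and $f(\Sigma^n)$ is a slice.

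The step I expect to be most delicate is the computation of $\mathfrak{L}_{k-1}\phi$ in the second paragraph --- in particular checking that the divergence contribution $\langle\mathrm{div}P_{k-1},\nabla\phi\rangle$ coming from \eqref{divPksecconst} combines with the curvature term of $L_{k-1}\hat\Theta$ to produce exactly the manifestly signed expression $C$ --- since it is here that the constant curvature hypothesis on $\p^n$ and the restriction \eqref{constseccurv} really enter. A secondary point requiring care is the exclusion of the degenerate case $\Theta\equiv 0$, handled above.
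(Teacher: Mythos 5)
Your proof is correct and follows the paper's strategy very closely: the same reduction to $\mathcal{H}(h)\geq 0$ and $\Theta\leq 0$ (though the paper implicitly assumes the first reduction and you make the ambient isometry explicit, which is a nice completion), the same test function $\phi=H_k^{1/k}\sigma(h)+\hat\Theta$, the same computation of $\mathfrak{L}_{k-1}\phi$ via \eqref{divPksecconst}, \eqref{lrsigma} and Corollary \ref{lrthetasecconst} (your $A,B,C$ agree with the four terms of the paper's \eqref{eqmarco1}, with the last two grouped into $C$), and the same use of $\mathfrak{L}_{k-1}$-parabolicity to force $\phi$ constant and hence the vanishing of each nonnegative summand. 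The only genuine departure is the endgame. The paper first shows $\{\Theta=0\}$ has empty interior (arguing that on any open piece of it $\phi=\sigma(h)H_k^{1/k}$ would make $h$ locally constant, contradicting $\norm{\nabla h}^2=1-\Theta^2=1$), and then uses vanishing of the $(n-k)\hat\Theta\pair{P_{k-1}\nabla h,\nabla h}$ term on the dense complement to get $\pair{P_{k-1}\nabla h,\nabla h}\equiv 0$. You instead extract $\Theta(1-\Theta^2)\equiv 0$ pointwise from $C\equiv 0$, conclude $\Theta$ is a constant in $\{-1,0\}$ by connectedness, and exclude $\Theta\equiv 0$ by a second application of $\mathfrak{L}_{k-1}$-parabolicity, this time to $h$ itself via $\mathfrak{L}_{k-1}h=\mathcal{H}(h)(c_{k-1}H_{k-1}-\pair{P_{k-1}\nabla h,\nabla h})\geq 0$. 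Both routes are valid; yours has the minor advantage that it leans on the $P_{k-2}$ summand of $C$, whose coefficient $n-k+1$ never vanishes, whereas the paper's argument as written uses the $P_{k-1}$ summand with coefficient $n-k$, which degenerates when $k=n$ (the fix is to use the $P_{k-2}$ term there as well, so this is only a gap in exposition, not in substance).
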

\begin{remark}
\rm{Comparing with Theorem \ref{main2compact} we have relaxed the condition on $\mathcal{H}$ but we are requiring, as it will be clear from the proof, the existence of an elliptic point. That, on a compact manifold was guaranteed by the assumption $\mathcal{H}>0$. Moreover, we observe that the angle function is indeed well defined because $\Sigma$ is two-sided. For $k=2$, this follows from the positivity of $H_2$ since $H_1^2\geq H_2>0$. In the remaining cases this property follows from Garding inequalities, as in the compact case. In any case we choose the orientation so that $H_1>0$.
}
\end{remark}

\begin{proof}
It follows from the hypotheses that $\sup_{\Sigma}\norm{A}<+\infty$ and therefore by Lemma \ref{lemmacurvature} the sectional curvature of $\Sigma$ is bounded from below. We deduce then the validity of the Omori-Yau maximum principle for the Laplacian.
Assume $\mathcal{H}(h)\geq0$. Applying the Omori-Yau maximum principle to the Laplace operator and using Equation \eqref{lrh} we find that
$$
-\sgn\Theta\liminf_{j \ra +\infty}H_1(q_j) \geq \mathcal{H}(h^*)\geq 0.
$$
Therefore for the chosen orientation, $\sgn\Theta=-1$ and  $\Theta\leq0$ on $\Sigma$. Consider the operator
$$
\mathfrak{L}_{k-1} f=\mathrm{div}(P_{k-1} \nabla f)
$$
and the function
$$
\phi=H_k^{\frac 1k}\sigma(h)+\hat{\Theta},
$$
where $\hat{\Theta}=\rho(h)\Theta$.
Since $\p^n$ has constant sectional curvature $\kappa$, it follows by Equation \eqref{divPksecconst} that
\begin{align*}
\mathfrak{L}_{k-1} \phi=&-(n-k+1)\Theta\Big(\frac{\kappa}{\rho^2(h)}+\mathcal{H}'(h)\Big)\pair{P_{k-2}\nabla h,\nabla \phi}+L_{k-1} \phi \\
=&-(n-k+1)\hat{\Theta}\Big(\frac{\kappa}{\rho^2(h)}+\mathcal{H}'(h)\Big)\pair{P_{k-2}\nabla h,\nabla h}H_k^{\frac1k}\\&+(n-k+1)\hat{\Theta}\Big(\frac{\kappa}{\rho^2(h)}+\mathcal{H}'(h)\Big)\pair{P_{k-2}A\nabla h,\nabla h}\\&+H_k^{\frac1k}L_{k-1} \sigma(h)+L_{k-1} \hat{\Theta}.
\end{align*}
Using Equation \eqref{lrsigma} and Corollary \ref{lrthetasecconst} we find
\begin{align}\label{eqmarco1}
\mathfrak{L}_{k-1} \phi=&c_{k-1}\rho'(h)H_k^{\frac1k}(H_{k-1}-H_k^{\frac{k-1}{k}})\nonumber\\
&-{n \choose k}\hat{\Theta}\left(nH_1H_k-(n-k)H_{k+1}-kH_k^{\frac{k+1}{k}}\right)\nonumber\\
&-(n-k)\hat{\Theta}\Big(\frac{\kappa}{\rho^2(h)}+\mathcal{H}'(h)\Big)\pair{P_{k-1}\nabla h,\nabla h}\\
&-(n-k+1)\hat{\Theta}H_k^{\frac 1k}\Big(\frac{\kappa}{\rho^2(h)}+\mathcal{H}'(h)\Big)\pair{P_{k-2} \nabla h,\nabla h}\nonumber.
\end{align}
Using Garding inequalities as in Theorem \ref{main2compact}, it is easy to prove that the first and the second terms are nonnegative. By the fact that each $P_j$ is an elliptic operator, $j=0,...,k-1$, and by Equation \eqref{seccurv}, it follows that also all the remaining terms in the previous equation are nonnegative. Thus $\mathfrak{L}_{k-1} \phi \geq 0$. Since, by assumption \eqref{eqrpar} $\Sigma^n$ is $\mathfrak{L}_{k-1}$-parabolic, we conclude that $\phi$ has to be constant. In particular, $\mathfrak{L}_{k-1} \phi=0$ and the four terms on the right-hand side of Equation \eqref{eqmarco1} vanish. Let us prove that $\mathcal{U}=\{p \in \Sigma^n : \Theta(p)=0\}$ has empty interior. Indeed, assume the contrary and let $\mathcal{V} \neq \emptyset$ be an open subset of $\mathcal{U}$. On $\mathcal{V}$ the function $\phi=\sigma(h)H_k^{1/k}$ is constant. Hence, since $H_k \neq 0$, then $\sigma(h)$ and, equivalently $h$, is constant on $\mathcal{V}$, which is not possible since $\norm{\nabla h}^2=1-\Theta^2=1$ on $\mathcal{V}$. Therefore, since the third term on the right-hand of \eqref{eqmarco1} vanishes identically, we have
$$
\pair{P_{k-1}\nabla h,\nabla h}=0.
$$
Since $P_{k-1}$ is positive definite, this means that $h$ has to be constant.
\end{proof}

\bibliographystyle{amsplain}
\bibliography{biblioAIR}

\providecommand{\bysame}{\leavevmode\hbox to3em{\hrulefill}\thinspace}
\providecommand{\MR}{\relax\ifhmode\unskip\space\fi MR }
\providecommand{\MRhref}[2]{%
  \href{http://www.ams.org/mathscinet-getitem?mr=#1}{#2}
}
\providecommand{\href}[2]{#2}
\begin{thebibliography}{10}

\bibitem{al}
A.~D. Alexandrov, \emph{A characteristic property of spheres}, Ann. Mat. Pura
  Appl. (4) \textbf{58} (1962), 303--315.

\bibitem{aliascolares}
L.~J. Al{\'{\i}}as and A.~G. Colares, \emph{Uniqueness of spacelike
  hypersurfaces with constant higher order mean curvature in generalized
  {R}obertson-{W}alker spacetimes}, Math. Proc. Cambridge Philos. Soc.
  \textbf{143} (2007), no.~3, 703--729.

\bibitem{aliasdajczer2}
L.~J. Al{\'{\i}}as and M.~Dajczer, \emph{Uniqueness of constant mean curvature
  surfaces properly immersed in a slab}, Comment. Math. Helv. \textbf{81}
  (2006), no.~3, 653--663.

\bibitem{aliasdajczer}
L.~J. Al\'ias and M.~Dajczer, \emph{Constant mean curvature hypersurfaces in
  warped product spaces}, Proc. Edinb. Math. Soc. (2) \textbf{50} (2007),
  511--526.

\bibitem{aliasliramalacarne}
L.~J. Al{\'{\i}}as, J.~H.~S. de~Lira, and J.~M. Malacarne, \emph{Constant
  higher-order mean curvature hypersurfaces in {R}iemannian spaces}, J. Inst.
  Math. Jussieu \textbf{5} (2006), no.~4, 527--562.

\bibitem{aliasimperarigolilor}
L.~J. Al{\'{\i}}as, D.~Impera, and M.~Rigoli, \emph{Spacelike hypersurfaces of
  constant $k$-mean curvature in generalized robertson-walker spacetimes}, To
  appear in Math. Proc. Cambridge Philos. Soc. doi: 10.1017/S0305004111000697.

\bibitem{barbosacolares}
J.~L.~M. Barbosa and A.~G. Colares, \emph{Stability of {H}ypersurfaces with
  {C}onstant $r$-{M}ean {C}urvature}, Ann. Glob. An. Geom. \textbf{15} (1997),
  277--297.

\bibitem{elbert}
M.~F. Elbert, \emph{Constant positive 2-mean curvature hypersurfaces}, Illinois
  J. Math. \textbf{46} (2002), no.~1, 247--267.

\bibitem{Ga}
L.~G{\.a}rding, \emph{An inequality for hyperbolic polynomials}, J. Math. Mech.
  \textbf{8} (1959), 957--965.

\bibitem{GT}
D.~Gilbarg and N.~S. Trudinger, \emph{Elliptic partial differential equations
  of second order}, second ed., Grundlehren der Mathematischen Wissenschaften
  [Fundamental Principles of Mathematical Sciences], vol. 224, Springer-Verlag,
  Berlin, 1983.

\bibitem{montiel}
S.~Montiel, \emph{Unicity of constant mean curvature hypersurfaces in some
  {R}iemannian manifolds}, Indiana Univ. Math. J. \textbf{48} (1999), no.~2,
  711--748.

\bibitem{Om}
H.~Omori, \emph{Isometric immersions of {R}iemannian manifolds}, J. Math. Soc.
  Japan \textbf{19} (1967), 205--214.

\bibitem{pirise2}
S.~Pigola, M.~Rigoli, and A.~G. Setti, \emph{A {L}iouville-type result for
  quasi-linear elliptic equations on complete {R}iemannian manifolds}, J.
  Funct. Anal. \textbf{219} (2005), no.~2, 400--432.

\bibitem{pirise}
\bysame, \emph{Maximum principles on {R}iemannian manifolds and applications},
  Mem. Amer. Math. Soc. \textbf{174} (2005), no.~822, x+99.

\bibitem{Y}
S.~T. Yau, \emph{Harmonic functions on complete {R}iemannian manifolds}, Comm.
  Pure Appl. Math. \textbf{28} (1975), 201--228.

\end{thebibliography}

\end{document}